\newtheorem{theorem}{Theorem}
\newtheorem{corollary}[theorem]{Corollary}
\newtheorem{definition}[theorem]{Definition}
\newtheorem{proposition}[theorem]{Proposition}
\newtheorem{remark}[theorem]{Remark}
\begin{document}

\title[On structure of linear differential operators]{On structure of linear differential operators, acting in line bundles}
%\author{ Valentin Lychagin and  Valeriy Yumaguzhin}
\author[Valentin Lychagin]{Valentin Lychagin}
\address{University of Tromso, Tromso, Norway; Institute of Control Sciences of RAS, Moscow, Russia}
\email{Valentin.Lychagin@matnat.uit.no}
\author[Valeriy Yumaguzhin]{Valeriy Yumaguzhin}
\address{%Ailamazyan 
Program Systems Institute of RAS, Pereslavl'-Zales\-skiy, Russia;  Institute of Control Sciences of RAS, Moscow, Russia}
\email{yuma@diffiety.botik.ru}
\thanks{V. Yumaguzhin is a corresponding author; phone: +79056368327, e-mail: yuma@diffiety.botik.ru }
\subjclass[2010]{Primary: 58J70, 53C05, 35A30; Secondary: 35G05, 53A55}
\keywords{k - order partial differential operator, jet bundle, differential invariant, equivalence problem}

\maketitle

\begin{abstract}
We study differential invariants of linear differential operators and use
them to find conditions for equivalence of differential operators acting in
line bundles over smooth manifolds with respect to groups of authomorphisms.
\end{abstract}

\tableofcontents

\section{Introduction}

This paper is a continuation of papers \cite{LY2} and \cite{LY3}, where
we analyzed the equivalence of linear differential operators of order $%
k=2, n\geq 3,$ and $k=3, n=2$, acting in line bundles over a smooth
manifolds of dimension $n$. \ In this paper we consider the general case,
when $k\geq 3$ and $n\geq 2.$

Possibly Bernard Riemann \cite{Riem} was the first, who analyzed the
eqiuvalence problem for second order scalar differential operators with
respect to diffeomorphism group.

He found the curvature of the metric, defined by the symbol of differential
operator, as an obstruction to transform differential operators of the
second order to operators with constant coefficients.

In the case, when the dimension of the base manifold equals two,
Pierre-Simon Laplace \cite{Lap} found "Laplace invariants" for the second
order hyperbolic differential operators, which are relative invariants, and
Lev Ovsyannikov in paper \cite{Ovs} found the corresponding invariants.

All invariants for hyperbolic equations in dimension two were found by Nail
Ibragimov in paper \cite{Ibr}.

The method, we used in this paper, is very similar to the method, we used in
papers \cite{LY2},\cite{LY3}.

First of all, we are looking for an affine connection on the base manifold $%
M $ and a linear connection in the line bundle $\xi $, which are naturally
associated with our differential operator.

In the case $k=2,$ we used the Levi-Civita connection on $M,$ given by the
principal symbol of the operator. Then the connection in the line bundle $%
\xi $ was chosen in such a way that the subsymbol of the operator becomes to
be trivial (see \cite{LY2} for more details).

Remark that all these constructions could be applied for operators of the
constant type only. The case of mixed type operators is singular and, as we
know, is singular not only for this method.

In the case $k=3,n=2,$ we've used connections, originally found by Viktor
Wagner \cite{Wag} and Shiing-Shen Chern \cite{Ak},\cite{Ch} in pure
geometrical context, \ instead of the Levi-Civita connection. The linear
connection in the line bundle we found by posed some conditions on the
subsymbol of the operator also.

The regularity conditions to use this method, require also that the operator
has distinct characteristics and once more, the mixed type operators are
excluded from the consideration.

The case of ordinary differential operators, $n=1,$ is also exceptional and
we'll consider it here only\ as an illustration of the general method.

Remark that the case of ordinary differential operators of the second order
was considered by Niky Kamran and Peter Olver in paper \cite{KamOlv} and
the case of linear ordinary differential equations was investigated by
Ernest Wilczynski in book \cite{Wil}.

The second step, when we begin to use these connections, is based on
quantization, defining by the connections \cite{LY2},\cite{LY3}.

The term "quantization" is taken from the wave mechanics \cite{Brog},
where it was used for reconstruction of differential operators from their
symbols.

Necessity of using connections in such type procedures follows directly from
standard requirements \cite{LQ}.

To extend this method for the general case, we, first of all, remark that \
such connections and the corresponding quantizations are exist if our
differential operator has the constant type, i.e. $\rm{GL}$-orbits of
the symbols of the operator do not depend on points of $M.$

In the case of operators of order two it is exactly requirement that
operator is elliptic or (ultra) hyperbolic. For the case $k=3,n=2$ it is the
requirement of distinct characteristics.

In other cases the description of $\rm{GL}$-orbits of the symbols is
the classical algebraic problem of the $\rm{GL}$-classification of $n$%
-ary forms. We use here slightly modified method, suggested in \cite{BL},%
\cite{BL2}, to find rational invariants and regular orbits of such $%
\rm{GL}$-action.

Thus, for constant type operators, the associated connections exist and the
associated quantization allows us to split and represent the differential
operator as a sum of symmetric contravariant tensors.

Then, the more or less routine machinery with using of the connections and
the splitting, allows us to find the field of rational differential
invariants of differential operators.

The third and the last step is based on using of these differential
invariants and \textit{natural coordinates }delivering by differential
invariants.

Namely, the values of $n$-invariants on a differential operator in general
position could be served as local coordinates on $M$, which we call \textit{%
natural}, because coefficients of given operator in these coordinates will
not be changed after applying an automorphism to our operator.

Finally, we introduce the notion of the \textit{natural atlas} and show how
to get the global classification of constant type differential operators and
the corresponding homogeneous differential equations.

\section{Differential operators}

\subsection{Notations}

The notations we use in this paper are similar to notations used in papers %
\cite{LY2},\cite{LY3}.

Let $M$ be an $n$-dimensional manifold and let $\pi :E\left( \pi \right)
\rightarrow M$ be a vector bundle.

We denote by $\tau :TM\rightarrow M$ and $\tau ^{\ast }:T^{\ast
}M\rightarrow M$ \ the tangent and respectively cotangent bundles over
manifold $M,$ and by $\mathbf{1}:\mathbb{R}\times M\rightarrow M\ $we denote
the trivial line bundle $.$

The symmetric and exterior powers of a vector bundle $\pi :E\left( \pi
\right) \rightarrow M$ \ will be denoted by $\mathbf{S}^{k}\left( \pi
\right) $ and $\mathbf{\Lambda }^{k}\left( \pi \right) .$

The module of smooth sections of bundle $\pi $ we denote by $C^{\infty
}\left( \pi \right) ,$ and for the cases tangent, cotangent and the trivial
bundles we'll use the following notations: $\Sigma _{k}\left( M\right)
=C^{\infty }\left( \mathbf{S}^{k}\left( \tau \right) \right) -$ the module
of symmetric $k$-vectors and $\Sigma ^{k}\left( M\right) =C^{\infty }\left( 
\mathbf{S}^{k}\left( \tau ^{\ast }\right) \right) -$ the module of symmetric 
$k$-forms, $\Omega _{k}\left( M\right) =C^{\infty }\left( \mathbf{\Lambda }%
^{k}\left( \tau \right) \right) -$ the module of skew-symmetric $k$-vectors
and $\Omega ^{k}\left( M\right) =C^{\infty }\left( \mathbf{\Lambda }%
^{k}\left( \tau ^{\ast }\right) \right) $ -the module of exterior $k$-forms, 
$C^{\infty }\left( \mathbf{1}\right) =C^{\infty }\left( M\right) .$

\subsection{Jets}

The bundles of $k$-jets of sections of bundle $\pi $ we denote by $\pi _{k}:%
\mathbf{J}^{k}\left( \pi \right) \rightarrow M$ and by $\pi _{k,l}:\mathbf{J}%
^{k}\left( \pi \right) \rightarrow \mathbf{J}^{l}\left( \pi \right) $ we
denote the projection (=reduction) of $k$-jets on $l$-jets, $k\geq l.$

There is the following exact sequence of vector bundles%
\begin{equation*}
\mathbf{0\rightarrow \mathbf{S}^{k}\left( \tau ^{\ast }\right) \otimes }\pi 
\mathbf{\rightarrow J}^{k}\left( \pi \right) \overset{\pi _{k,k-1}}{%
\longrightarrow }\mathbf{J}^{k-1}\left( \pi \right) \rightarrow \mathbf{0,}
\end{equation*}%
connecting bundles of $\left( k-1\right) $ and $k$ -jets.

The inverse limit of the sequence $\mathbf{J}^{k}\left( \pi \right) \overset{%
\pi _{k,k-1}}{\longrightarrow }\mathbf{J}^{k-1}\left( \pi \right) $ \ is called
the bundle of $\infty $-jets: 
\begin{equation*}
\mathbf{J}^{\infty }\left( \pi \right) =\underset{\longleftarrow }{\lim }\,%
\mathbf{J}^{k}\left( \pi \right) .
\end{equation*}

The smooth functions on $\mathbf{J}^{\infty }\left( \pi \right) $ are just
smooth functions on some finite jet bundle. The special vector fields on $%
\mathbf{J}^{\infty }\left( \pi \right) ,$ we call them horizontal, are
extremely important in this paper. \ Namely, let $X$ be a vector field on
manifold $M.$ Then by a \textit{total lift} of $X$ we understand a
derivation $\widehat{X}$ in the algebra of smooth functions $C^{\infty
}\left( \mathbf{J}^{\infty }\left( \pi \right) \right) $ , where 
\begin{equation*}
\widehat{X}:C^{\infty }\left( \mathbf{J}^{k}\left( \pi \right) \right)
\rightarrow C^{\infty }\left( \mathbf{J}^{k+1}\left( \pi \right) \right) ,
\end{equation*}%
for all $k=0,1,..,$ and such the following universal property holds:%
\begin{equation*}
j_{k+1}\left( S\right) ^{\ast }\left( \widehat{X}\left( f\right) \right)
=X\left( j_{k}\left( S\right) ^{\ast }\left( f\right) \right) ,
\end{equation*}%
for any section $S\in C^{\infty }\left( \pi \right)$, and function $f\in
C^{\infty }\left( \mathbf{J}^{k}\left( \pi \right) \right) $, and for $%
k=0,1,...$

Shortly, the universal property could be written in the form:%
\begin{equation*}
j_{k+1}\left( S\right) ^{\ast }\circ \widehat{X}=X\circ j_{k}\left( S\right)
^{\ast }.
\end{equation*}%
Linear combinations of total derivations of the form 
\begin{equation*}
\sum_{i}\lambda _{i}\widehat{X_{i}},
\end{equation*}
where $\lambda _{i}\in C^{\infty }\left( \mathbf{J}^{k}\left( \pi \right)
\right) ,$ we'll call \textit{horizontal vector fields} on the space $%
\mathbf{J}^{k}\left( \pi \right) ,$ and linear combinations of compositions 
\begin{equation*}
\sum_{i_{1}+..+i_{l}\leq N}\lambda _{i_{1}...i_{l}}\widehat{X_{i_{1}}%
}\circ \cdots \circ \widehat{X_{i_{l}}}
\end{equation*}%
we'll call \textit{total differential operators} of order $\leq N$ on the
space $\mathbf{J}^{k}\left( \pi \right) ,$ if all $\lambda
_{i_{1}...i_{l}}\in C^{\infty }\left( \mathbf{J}^{k}\left( \pi \right)
\right) .$

We denote by $\Sigma _{N}\left( \pi \right) $ the module generated by linear
combinations 
\begin{equation*}
\sum_{i_{1}+..+i_{l}=N}\lambda _{i_{1}...i_{N}}\widehat{X_{i_{1}}}%
\cdot \cdots \cdot \widehat{X_{i_{N}}}
\end{equation*}%
of the symmetric products of horizontal vector fields and call them total
symbols of degree $N$ and order $k.$

Let $\omega _{i}$ be exterior (or symmetric) differential $l$-forms on
manifold $M.$ Then linear combinations of pullbacks $\pi _{k}^{\ast }\left(
\omega _{i}\right) $ (which we'll continue to denote by $\omega _{i}$) of
the form $\sum_{i}\lambda _{i}\omega _{i},$ where $\lambda _{i}\in
C^{\infty }\left( \mathbf{J}^{k}\left( \pi \right) \right) ,$ we'll call 
\textit{horizontal differential forms} of degree $l$ and order $k.$ The
modules of such forms we'll denote by $\Omega ^{k}\left( \pi \right) .$

\subsection{Universal constructions}

In the case when the bundle $\pi $ is a tensor bundle or a bundle of
differential operators we'll need the following universal construction which
generalizes the construction of the universal Liouville form on the
cotangent bundle.

Let's consider, for example, the case when $\pi =\Lambda ^{l}\left( \tau
^{\ast }\right) $ is the bundle of exterior differential forms. Then it is
easy to see that there is and unique horizontal $l$-form $\rho _{l}^{a}\in
\Omega ^{l}\left( \pi \right) $ of the order zero and such that%
\begin{equation*}
j_{0}\left( \omega \right) ^{\ast }\left( \rho _{l}^{a}\right) =\omega ,
\end{equation*}%
for all $\omega \in \Omega ^{l}\left( M\right) .$

For the case of contravariant tensors, say $\pi =\tau ,$ the universal
construction goes in the following way.

We define the universal vector field as a horizontal vector field $\nu _{1}$
of order zero such that 
\begin{equation*}
j_{1}\left( X\right) ^{\ast }\left( \nu _{1}\left( f\right) \right) =X\left(
f\right) ,
\end{equation*}%
for all vector fields $X$ on $M$ and all functions $f\in C^{\infty }\left(
M\right) .$

\textbf{Coordinates}

Let $\left( x_{1},..,x_{n}\right) $ be local coordinates on $M$ and $\left(
u^{\alpha }\right) $ or $\left( u_{\alpha }\right) $ be induced coordinates
in the bundle $\Lambda ^{l}\left( \tau ^{\ast }\right) $ or $\Lambda
^{l}\left( \tau \right) .$

Here $\alpha =\left( \alpha _{1}<\alpha _{2}<..<\alpha _{n}\right) $ are
multi indices.

Then $\rho _{1}=\sum u^{i}dx_{i}$ is the universal Liouville form and 
\begin{equation*}
\rho _{l}^{a}=\sum_{\alpha _{1}<\alpha _{2}<..<\alpha _{l}}u^{\alpha
}dx_{\alpha _{1}}\wedge \cdots \wedge dx_{\alpha _{l}}.
\end{equation*}%
For contravariant tensors we have 
\begin{equation*}
\nu _{1}=\sum u_{i}\frac{d}{dx_{i}}
\end{equation*}
and 
\begin{equation*}
\nu _{l}^{a}=\sum_{\alpha _{1}<\alpha _{2}<..<\alpha _{l}}u_{\alpha }\frac{d%
}{dx_{\alpha _{1}}}\wedge \cdots \wedge \frac{d}{dx_{\alpha _{l}}},
\end{equation*}%
where $\frac{d}{dx_{i}}$ are the total derivations.

\subsection{Symbols}

We denote by $\mathbf{Diff}_{k}\left( \xi \right) $ the $C^{\infty }\left(
M\right) $-module of linear differential operators of order $\leq k,\ $%
acting in the vector bundle $\xi $, and by $\pi :Diff_{k}\left( \xi \right)
\rightarrow M$ we denote the bundle of differential operators, thus $%
C^{\infty }\left( \pi \right) =\mathbf{Diff}_{k}\left( \xi \right) $ in this
case.

By \textit{leading or principal symbol} $\sigma _{k}=\rm{smbl}%
_{k}\left( A\right) $ of operator $A\in \mathbf{Diff}_{k}\left( \xi \right) $
we mean the equivalence class 
\begin{equation*}
\rm{smbl}_{k}\left( A\right) =A\rm{mod}\mathbf{Diff}%
_{k-1}\left( \xi \right) .
\end{equation*}

It is known that the symbol could be also viewed as a fibre-wise homogeneous
polynomial of degree $k$ on the cotangent bundle with values in the bundle
of endomorphisms 
\begin{equation*}
\sigma _{k}\in \mathbf{\mathbf{S}^{k}\left( \tau \right) \otimes }\rm{%
End}\left( \xi \right)
\end{equation*}%
and the following sequence

\begin{equation*}
\mathbf{0\rightarrow Diff}_{k-1}\left( \xi \right) \rightarrow \mathbf{Diff}%
_{k}\left( \xi \right) \overset{\rm{smbl}_{k}}{\longrightarrow }%
\mathbf{\Sigma }_{k}\left( M\right) \otimes \rm{End}\left( \xi \right)
\rightarrow \mathbf{0,}
\end{equation*}%
exact.

From now on the case, that we consider in this paper, $\xi $ is a line
bundle. Then the above sequence takes the form 
\begin{equation}
\mathbf{0\rightarrow Diff}_{k-1}\left( \xi \right) \rightarrow \mathbf{Diff}%
_{k}\left( \xi \right) \overset{\sigma }{\rightarrow }\mathbf{\Sigma }%
_{k}\left( M\right) \rightarrow \mathbf{0,}  \label{symbol seq}
\end{equation}%
and because of this we also call \textit{symbols }elements of $\mathbf{%
\Sigma }_{k}=\mathbf{\Sigma }_{k}\left( M\right) $ .

The universal construction, discussed above, in this case gives us a
universal symbols \ $\rho _{k}^{s}$ - horizontal symmetric $k$-vector fields
of order zero such that 
\begin{equation*}
j_{1}\left( \sigma \right) ^{\ast }\left( (df_{1}\cdot \cdots \cdot
df_{k})\rfloor \rho _{k}^{s}\right) =(df_{1}\cdot \cdots \cdot
df_{k})\rfloor \sigma ,
\end{equation*}%
for all symbols $\sigma \in \mathbf{\Sigma }_{k}$ on $M$ and all functions $%
f_{i}\in C^{\infty }\left( M\right) .$

Here we denoted by $\cdot $ the symmetric product and by $\rfloor $ the hook
operator.

In local coordinates the universal symbol $\rho _{k}^{s}$ has the same form
as $\nu _{k}^{a},$ where we changed the exterior product by the symmetric one%
\begin{equation*}
\nu _{k}^{s}=\sum u_{\alpha }\frac{d}{dx_{\alpha _{1}}}\cdot \cdots \cdot 
\frac{d}{dx_{\alpha _{l}}}.
\end{equation*}

In the case, when line bundle is trivial, $\xi =\mathbf{1,}$ and $\pi
=Diff_{k}\left( \mathbf{1}\right) $ is the bundle of scalar differential
operators the universal construction gives us a total differential operator $%
\square _{k}$ on $\mathbf{J}^{0}\left( \pi \right) $ of order $k,$ $\square
_{k}:$ $C^{\infty }\left( \mathbf{J}^{0}\left( \pi \right) \right)
\rightarrow C^{\infty }\left( \mathbf{J}^{k}\left( \pi \right) \right) ,$
such that 
\begin{equation*}
A\left( f\right) =j_{k}\left( A\right) ^{\ast }\left( \square _{k}\left(
f\right) \right) ,
\end{equation*}%
for all $A\in \mathbf{Diff}_{k}\left( \mathbf{1}\right) $ and $f\in
C^{\infty }\left( M\right) .$

Remark, that $\square _{k}:$ $C^{\infty }\left( \mathbf{J}^{l}\left( \pi
\right) \right) \rightarrow C^{\infty }\left( \mathbf{J}^{l+k}\left( \pi
\right) \right) ,$ for all $l=1,2,...,$ and $\square _{k}$ has the form 
\begin{equation*}
\square _{k}=\sum_{\left\vert \alpha \right\vert \leq k}u_{\alpha }\left( 
\frac{d}{dx}\right) ^{\alpha },
\end{equation*}%
where $\left( x_{1},...,x_{n},u_{\alpha },0\leq \left\vert \alpha
\right\vert \leq k\right) $ are canonical coordinates in the bundle $\pi
:Diff_{k}\left( \mathbf{1}\right) \rightarrow M.$

\subsection{Total lifts}

Differential operators $\Delta :C^{\infty }\left( \alpha \right) \rightarrow
C^{\infty }\left( \beta \right) $ of order $k,$ acting from a vector bundle $%
\alpha $ to another vector bundle, say $\beta ,$ could be lifted to
operators $\widehat{\Delta }:C^{\infty }\left( \widehat{\alpha }\right)
\rightarrow C^{\infty }\left( \widehat{\beta }\right) ,$ where $\widehat{%
\alpha }$ and $\widehat{\beta }$ are vector bundles over $\mathbf{J}^{\infty
}\left( \pi \right) ,$ induced by the projection $\pi _{\infty }:\mathbf{J}%
^{\infty }\left( \pi \right) \rightarrow M.$ The operator $\widehat{\Delta }%
, $ which we call \textit{total lift} of $\Delta ,$ defines also by the
universal property: 
\begin{equation*}
j_{k+l}\left( h\right) ^{\ast }\circ \widehat{\Delta }=\Delta \circ
j_{l}\left( h\right) ^{\ast },
\end{equation*}%
for all sections $h\in C^{\infty }\left( \pi \right) $ and $l=0,1,..$

We'll especially use the following two cases. The total lift of the de Rham
operator 
\begin{equation*}
d:\Omega ^{i}\left( M\right) \rightarrow \Omega ^{i+1}\left( M\right)
\end{equation*}
is the \textit{total differential }%
\begin{equation*}
\widehat{d}:\Omega ^{i}\left( \pi \right) \rightarrow \Omega ^{i+1}\left(
\pi \right) .
\end{equation*}

In the another case, when 
\begin{equation*}
d_{\nabla }:C^{\infty }\left( \alpha \right) \rightarrow C^{\infty }\left(
\alpha \right) \otimes \Omega ^{1}\left( M\right)
\end{equation*}
is a covariant differential of a connection $\nabla $ in the bundle $\alpha
, $ operator 
\begin{equation*}
\widehat{d}_{\nabla }:C^{\infty }\left( \widehat{\alpha }\right) \rightarrow
C^{\infty }\left( \widehat{\alpha }\right) \otimes \Omega ^{1}\left( \pi
\right)
\end{equation*}
is the \textit{total covariant differential.}

\subsection{Symbols and Quantization}

Let $\Sigma ^{\cdot }=\oplus _{k\geq 0}\Sigma ^{k}\left( M\right) $ be the
gra\-ded algebra of symmetric differential forms and let $\Sigma ^{\cdot
}\left( \xi \right) =C^{\infty }\left( \xi \right) \otimes \Sigma ^{\cdot }$
be the graded $\Sigma ^{\cdot }$-module of symmetric differential forms with
values in bundle $\xi .$

Assume that we have two connections: connection $\nabla $ on manifold $M$
and linear connection $\nabla ^{\xi }$ in the bundle $\xi .$

Then the covariant differentials 
\begin{equation*}
d_{\nabla }:\Omega ^{1}\left( M\right) \rightarrow \Omega ^{1}\left(
M\right) \otimes \Omega ^{1}\left( M\right) ,
\end{equation*}%
and 
\begin{equation*}
d_{\nabla ^{\xi }}:C^{\infty }\left( \xi \right) \rightarrow C^{\infty
}\left( \xi \right) \otimes \Omega ^{1}\left( M\right)
\end{equation*}%
define two derivations%
\begin{eqnarray*}
d_{\nabla }^{s} &:&\Sigma ^{\cdot }\rightarrow \Sigma ^{\cdot +1}, \\
d_{\nabla ^{\xi }}^{s} &:&\Sigma ^{\cdot }\left( \xi \right) \rightarrow
\Sigma ^{\cdot +1}\left( \xi \right) ,
\end{eqnarray*}%
of degree one in graded algebra $\Sigma ^{\cdot }$ and graded $\Sigma
^{\cdot }$-module $\Sigma ^{\cdot }\left( \xi \right) $ respectively.

Namely, all derivations, as well as these derivations, are defined by their
actions on generators.

We put%
\begin{eqnarray*}
d_{\nabla }^{s} =d &:& C^{\infty }\left( M\right) \rightarrow \Omega
^{1}\left( M\right) =\Sigma ^{1}, \\
d_{\nabla }^{s} &:&\Omega ^{1}\left( M\right) =\Sigma ^{1}\overset{d_{\nabla
}}{\longrightarrow }\Omega ^{1}\left( M\right) \otimes \Omega ^{1}\left(
M\right) \overset{\rm{Sym}}{\longrightarrow }\Sigma ^{2},
\end{eqnarray*}%
and define $d_{\nabla ^{\xi }}^{s}$ as a derivation over $d_{\nabla }^{s}$
such that 
\begin{equation*}
d_{\nabla ^{\xi }}^{s}=d_{\nabla ^{\xi }}:C^{\infty }\left( \xi \right)
\rightarrow C^{\infty }\left( \xi \right) \otimes \Sigma ^{1}.
\end{equation*}

Let's now $\sigma \in \Sigma _{k}$ be a symbol. We define a differential
operator $\widehat{\sigma }\in \mathbf{Diff}_{k}\left( \xi \right) $ as
follows:%
\begin{equation}
\widehat{\sigma }\left( h\right) \overset{\text{def}}{=}\frac{1}{k!}%
\left\langle \sigma ,\left( d_{\nabla ^{\xi }}^{s}\right) ^{k}\left(
h\right) \right\rangle .  \label{Quant}
\end{equation}

Here $h\in C^{\infty }\left( \xi \right) ,\left( d_{\nabla ^{\xi
}}^{s}\right) ^{k}\left( h\right) \in C^{\infty }\left( \xi \right) \otimes
\Sigma ^{k},$and $\left\langle \cdot ,\cdot \right\rangle $ is the natural
pairing 
\begin{equation*}
\Sigma _{k}\otimes C^{\infty }\left( \xi \right) \otimes \Sigma
^{k}\rightarrow C^{\infty }\left( \xi \right) .
\end{equation*}%
Remark that the value of the symbol of the derivation $d_{\nabla ^{\xi
}}^{s} $ on a covector $\theta $ equals to the symmetric product by $\theta $
into the module $\Sigma ^{\cdot }\left( \xi \right) $ and because the symbol
of a composition of operators equals the composition of symbols we get that
the symbol of operator $\widehat{\sigma }$ equals $\sigma .$

We call this operator $\widehat{\sigma }$ a \textit{quantization of symbol }$%
\sigma $ and write $\widehat{\sigma }=\mathcal{Q}\left( \sigma \right) .$

By the construction morphism $\mathcal{Q}\mathit{:}\Sigma _{k}\rightarrow 
\mathbf{Diff}_{k}\left( \xi \right) $ splits sequence (\ref{symbol seq}).

Let's now $A\in \mathbf{Diff}_{k}\left( \xi \right) $ be a differential
operator and $\sigma _{k}\left( A\right) \in \Sigma _{k}$ be its symbol.
Then operator 
\begin{equation*}
A-\mathcal{Q}\left( \sigma _{k}\left( A\right) \right)
\end{equation*}%
has order $\left( k-1\right) ,$ and let $\sigma _{k-1}\left( A\right) \in
\Sigma _{k-1}$ be its symbol.

Then operator $A-\mathcal{Q}\left( \sigma _{k}\left( A\right) \right) -%
\mathcal{Q}\left( \sigma _{k-1}\left( A\right) \right) $ has order $\left(
k-2\right) .$ Repeating this process we get subsymbols $\sigma _{i}\left(
A\right) \in \Sigma _{i},$ $0\leq i\leq k-1,$ such that 
\begin{equation*}
A=\mathcal{Q}\left( \sigma \left( A\right) \right) ,
\end{equation*}%
where 
\begin{equation*}
\sigma \left( A\right) =\oplus _{0\leq i\leq k}\sigma _{i}\left( A\right)
\end{equation*}%
is a \textit{total symbol }of the operator, and $\mathcal{Q}\left( \sigma
\left( A\right) \right) =\sum_{i}\mathcal{Q}\left( \sigma _{i}\left(
A\right) \right) .$

\textbf{Coordinates}

Let $\left( x_{1},...,x_{n}\right) $ be local coordinates in a neighborhood $%
\mathcal{O\subset }M$ and $e\in C^{\infty }\left( \mathcal{O}\right) $ be a
nowhere vanishing section of the line bundle $\xi $ over $\mathcal{O}.$
Denote by $\left( x_{1},...,x_{n},w_{1},..,w_{n}\right) $ induced standard
coordinates in the tangent bundle over $\mathcal{O}.$

Then, $d_{\nabla ^{\xi }}\left( e\right) =e\otimes \theta ,$ where $\theta
=\sum \theta _{i}dx_{i}$ is the connection form, and $d_{\nabla }\left(
dx_{k}\right) =-\sum \Gamma _{ij}^{k}dx_{i}\otimes dx_{j},$ where $\Gamma
_{ij}^{k}$ are the Christoffel symbols of the connection $\nabla =\nabla
^{M}.$

Thus, in coordinates $\left( x,w\right) $ we have $d_{\nabla }^{s}\left(
w_{k}\right) =-\sum \Gamma _{ij}^{k}w_{i}w_{j}$ and the derivations $%
d_{\nabla }^{s}$ and $d_{\nabla ^{\xi }}^{s}$ are of the form:

\begin{eqnarray*}
d_{\nabla }^{s} &=&\sum w_{i}\partial _{x_{i}}-\sum \Gamma
_{ij}^{k}w_{i}w_{j}\ \partial _{w_{k}}, \\
d_{\nabla ^{\xi }}^{s} &=&\sum w_{i}\left( \partial _{x_{i}}+\theta
_{i}\right) -\sum \Gamma _{ij}^{k}w_{i}w_{j}\ \partial _{w_{k}}.
\end{eqnarray*}

\subsection{Group actions}

We consider two groups: $\mathcal{G}\left( M\right) -$ the group of
diffeomorphisms of manifold $M,$ and $\mathbf{Aut}(\xi )-$ groups of
automorphisms of line bundles $\xi $ over $M.$

There is the following sequence of group morphisms 
\begin{equation}
1\rightarrow \mathcal{F}\left( M\right) \rightarrow \mathbf{Aut}(\xi
)\rightarrow \mathcal{G}\left( M\right) \rightarrow 1,
\label{group exact seq}
\end{equation}%
where $\mathcal{F}\left( M\right) \subset C^{\infty }\left( M\right) $ is
the multiplicative group of smooth nowhere vanishing functions on $M$ .

\begin{proposition}
\label{diffeomorphismLift}A diffeomorphism $\psi :M\rightarrow M$ admits a
lifting to an automorphism $\widetilde{\psi }:E\left( \xi \right)
\rightarrow E\left( \xi \right) $ $\ $if and only if $\psi ^{\ast }\left(
w_{1}\left( \xi \right) \right) =w_{1}\left( \xi \right) ,$ where $%
w_{1}\left( \xi \right) \in H^{1}\left( M,\mathbb{Z}_{2}\right) $ is the
first Stiefel-Whitney class of the bundle.
\end{proposition}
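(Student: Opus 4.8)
The plan is to reduce the existence of a lift to an isomorphism problem between $\xi$ and its pullback $\psi^{*}\xi$, and then to settle that isomorphism problem by the homotopy classification of real line bundles.

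First I would note that an automorphism $\widetilde{\psi}\colon E(\xi)\to E(\xi)$ covering $\psi$ is, by definition, a fibrewise linear diffeomorphism with $\pi\circ\widetilde{\psi}=\psi\circ\pi$. For each $x\in M$ it restricts to a linear isomorphism $E(\xi)_{x}\to E(\xi)_{\psi(x)}$. Since the pullback bundle $\psi^{*}\xi$ has fibre $(\psi^{*}\xi)_{x}=E(\xi)_{\psi(x)}$ over $x$, the assignment $x\mapsto\widetilde{\psi}|_{E(\xi)_{x}}$ is exactly a bundle morphism $\xi\to\psi^{*}\xi$ over $\mathrm{id}_{M}$ which is a fibrewise isomorphism, i.e. an isomorphism of line bundles $\xi\cong\psi^{*}\xi$. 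Conversely, any such isomorphism composed with the tautological map $\psi^{*}\xi\to\xi$ covering $\psi$ produces a lift $\widetilde{\psi}$. Thus $\psi$ lifts to $\mathbf{Aut}(\xi)$ if and only if $\psi^{*}\xi\cong\xi$.

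Next I would invoke the classification of real line bundles over $M$ by the first Stiefel--Whitney class: since the classifying space is $BO(1)=\mathbb{R}P^{\infty}=K(\mathbb{Z}_{2},1)$, the assignment $\eta\mapsto w_{1}(\eta)$ gives a bijection between isomorphism classes of real line bundles over $M$ and $H^{1}(M,\mathbb{Z}_{2})=[M,K(\mathbb{Z}_{2},1)]$. In particular $\psi^{*}\xi\cong\xi$ if and only if $w_{1}(\psi^{*}\xi)=w_{1}(\xi)$. By naturality of Stiefel--Whitney classes under pullback, $w_{1}(\psi^{*}\xi)=\psi^{*}(w_{1}(\xi))$, so the condition becomes $\psi^{*}(w_{1}(\xi))=w_{1}(\xi)$, which is precisely the assertion of the proposition.

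The routine but essential verifications are in the first step: one must check that the correspondence between lifts and isomorphisms $\xi\cong\psi^{*}\xi$ is well defined in both directions and respects smoothness, and that fibrewise linearity together with bijectivity is preserved. The genuine content, which I would quote rather than reprove, is the classification statement $\mathrm{Vect}^{1}_{\mathbb{R}}(M)\cong H^{1}(M,\mathbb{Z}_{2})$; granting that, the equivalence is immediate from naturality. The main point to keep in mind is that the argument is special to \emph{line} bundles, where the single invariant $w_{1}$ is a complete invariant --- for higher rank the isomorphism type is not captured by $w_{1}$ alone and the analogous statement would fail.
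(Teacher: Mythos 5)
Your proof is correct and both directions come out cleanly, but it settles the key step by a different lemma than the paper does. Both arguments make the same initial reduction: a lift $\widetilde{\psi}$ covering $\psi$ exists if and only if $\xi \cong \psi^{*}\left( \xi \right)$ as line bundles over $M$. You then finish by invoking the full homotopy classification $\mathrm{Vect}^{1}_{\mathbb{R}}\left( M\right) \cong \left[ M,K\left( \mathbb{Z}_{2},1\right) \right] \cong H^{1}\left( M,\mathbb{Z}_{2}\right)$, so that $w_{1}$ is a complete invariant and the statement follows immediately from naturality. The paper instead uses only the weaker fact that a real line bundle is trivial if and only if $w_{1}$ vanishes (citing Milnor--Stasheff), and applies it not to $\xi$ itself but to the Hom-bundle $\xi ^{\ast }\otimes \psi ^{\ast }\left( \xi \right)$: since $w_{1}$ is additive under tensor products of line bundles, the hypothesis $\psi ^{\ast }\left( w_{1}\left( \xi \right) \right) =w_{1}\left( \xi \right)$ forces $w_{1}\left( \xi ^{\ast }\otimes \psi ^{\ast }\left( \xi \right) \right) =0$, and a nowhere vanishing section of this trivial bundle is precisely an isomorphism $\xi \cong \psi ^{\ast }\left( \xi \right)$ covering the identity, hence a lift of $\psi$. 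The two routes are essentially equivalent in strength --- the paper's tensor trick is the standard device by which one upgrades the triviality criterion to the statement that $w_{1}$ is a complete invariant for line bundles --- but the paper's version constructs the isomorphism explicitly from a section and needs only the weaker input, while yours quotes the classifying-space theorem and gets the equivalence in one line; yours also makes the necessity direction explicit, which the paper leaves implicit. Your closing caveat that the argument is special to line bundles is exactly the point on which the paper's version also hinges: only for line bundles is $\xi ^{\ast }\otimes \psi ^{\ast }\left( \xi \right)$ again a line bundle, detected by $w_{1}$ alone.
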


\begin{proof}
Remark that a real linear bundle $\xi $ is trivial if and only if the class $%
w_{1}\left( \xi \right) \in H^{1}\left( M,\mathbb{Z}_{2}\right) $ vanishes (%
\cite{MS}). Therefore, in the case when $w_{1}\left( \xi \right) =0$ the
statement of the lemma trivial.

Let now $w_{1}\left( \xi \right) \neq 0$ and let $\psi ^{\ast }\left( \xi
\right) $ be the line bundle induced by a diffeomorphism $\psi .$ Then, we
have 
\begin{equation*}
w_{1}\left( \xi ^{\ast }\otimes \psi ^{\ast }\left( \xi \right) \right)
=w_{1}\left( \xi ^{\ast }\right) +w_{1}\left( \psi ^{\ast }\left( \xi
\right) \right) =w_{1}\left( \xi \right) +\psi ^{\ast }\left( w_{1}\left(
\xi \right) \right) =0,
\end{equation*}%
if \ $\psi ^{\ast }\left( w_{1}\left( \xi \right) \right) =w_{1}\left( \xi
\right) .$

Therefore, any nowhere vanishing section of the bundle $\xi ^{\ast }\otimes
\psi ^{\ast }\left( \xi \right) $ give us an isomorphism between $\ $bundle $%
\xi $ and $\psi ^{\ast }\left( \xi \right) $ covering the identity map and
then the lift of diffeomorphism $\psi .$
\end{proof}

\begin{corollary}
The following sequence of group morphisms 
\begin{equation*}
1\rightarrow \mathcal{F}\left( M\right) \rightarrow \mathbf{Aut}(\xi
)\rightarrow \mathcal{G}_{\xi }\left( M\right) \rightarrow 1,
\end{equation*}

where 
\begin{equation*}
\mathcal{G}_{\xi }\left( M\right) =\left\{ \left. \phi \in \mathcal{G}\left(
M\right) \right\vert \phi ^{\ast }\left( w_{1}\left( \xi \right) \right)
=w_{1}\left( \xi \right) \right\} ,
\end{equation*}%
is exact.
\end{corollary}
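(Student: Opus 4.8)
The plan is to verify exactness of the sequence at each of its three interior terms $\mathcal{F}\left( M\right) $, $\mathbf{Aut}(\xi )$, and $\mathcal{G}_{\xi }\left( M\right) $, reusing the sequence (\ref{group exact seq}) together with Proposition \ref{diffeomorphismLift} to pin down the image of the projection $p:\mathbf{Aut}(\xi )\rightarrow \mathcal{G}\left( M\right) $ that sends an automorphism to the diffeomorphism of the base it covers. First I would recall the concrete description of the maps: the inclusion $\mathcal{F}\left( M\right) \hookrightarrow \mathbf{Aut}(\xi )$ sends a nowhere vanishing function $f$ to the fibrewise multiplication $v\mapsto f\left( \tau \left( v\right) \right) v$, an automorphism covering the identity on $M$. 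This homomorphism is plainly injective, since two distinct functions differ on some fibre, which gives exactness at $\mathcal{F}\left( M\right) $.

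For exactness at $\mathbf{Aut}(\xi )$ I would identify $\ker p$ with the automorphisms covering $\mathrm{id}_{M}$. Since $\xi $ is a line bundle, any such automorphism restricts on each fibre to an invertible linear map $\mathbb{R}\rightarrow \mathbb{R}$, hence is multiplication by a nowhere vanishing scalar depending smoothly on the base point, i.e. an element of $\mathcal{F}\left( M\right) $. Thus $\ker p$ coincides with the image of $\mathcal{F}\left( M\right) $, as required.

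The third step is where Proposition \ref{diffeomorphismLift} does the real work, namely determining $\mathrm{im}\,p$. On one hand, if $\widetilde{\psi }$ covers $\psi $, then $\widetilde{\psi }$ furnishes a bundle isomorphism between $\xi $ and $\psi ^{\ast }\left( \xi \right) $, so $w_{1}\left( \psi ^{\ast }\left( \xi \right) \right) =w_{1}\left( \xi \right) $, that is $\psi ^{\ast }\left( w_{1}\left( \xi \right) \right) =w_{1}\left( \xi \right) $; hence $\mathrm{im}\,p\subseteq \mathcal{G}_{\xi }\left( M\right) $. On the other hand, Proposition \ref{diffeomorphismLift} states precisely that every $\phi $ with $\phi ^{\ast }\left( w_{1}\left( \xi \right) \right) =w_{1}\left( \xi \right) $ admits a lift, so $\mathcal{G}_{\xi }\left( M\right) \subseteq \mathrm{im}\,p$. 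Combining the two inclusions gives $\mathrm{im}\,p=\mathcal{G}_{\xi }\left( M\right) $, which yields simultaneously that $p$ corestricts to a surjection onto $\mathcal{G}_{\xi }\left( M\right) $ (exactness on the right) and, together with functoriality of $w_{1}$ under pullback, that $\mathcal{G}_{\xi }\left( M\right) $ is genuinely a subgroup closed under composition and inverse.

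I expect the main obstacle to be conceptual bookkeeping rather than any deep point: the entire content of the corollary is the observation that, once the target $\mathcal{G}\left( M\right) $ is replaced by the stabilizer $\mathcal{G}_{\xi }\left( M\right) $ of the Stiefel--Whitney class, the obstruction identified in Proposition \ref{diffeomorphismLift} vanishes and the sequence becomes exact on the right. Accordingly, the only points that I would check with care are that the fibrewise-multiplication description of $\ker p$ is complete for a line bundle, and that the isomorphism $\xi \cong \psi ^{\ast }\left( \xi \right) $ induced by a lift really does identify the two first Stiefel--Whitney classes.
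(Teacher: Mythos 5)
Your proposal is correct and follows exactly the route the paper intends: the corollary is stated there without proof as an immediate consequence of Proposition \ref{diffeomorphismLift} (both directions of its "if and only if") together with the sequence (\ref{group exact seq}), and your argument simply makes explicit the routine verifications at $\mathcal{F}\left( M\right)$ and at $\mathbf{Aut}(\xi )$ that the paper leaves implicit. No gaps.
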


\begin{remark}
For the case of complex line bundles this proposition is valid too if we
consider the Chern characteristic classes instead of Stiefel-Whitney classes.
\end{remark}

For the scalar differential operators $A\in \mathbf{Diff}_{k}\left( \mathbf{1%
}\right) $ we consider the standard action of the diffeomorphism group:%
\begin{equation*}
\phi _{\ast }:A\longmapsto \phi _{\ast }\circ A\circ \phi _{\ast }^{-1},
\end{equation*}%
where $\phi _{\ast }=\phi ^{\ast -1}:C^{\infty }\left( M\right) \rightarrow
C^{\infty }\left( M\right) $ is the induced by $\phi $ algebra morphism and $%
\phi ^{\ast }\left( f\right) =f\circ \phi ,$ $f\in C^{\infty }\left(
M\right) .$

For general line bundles and operators $A\in \mathbf{Diff}_{k}\left( \xi
\right) $ the action of the automorphism group is the following.

Let $\widetilde{\phi }$ be an automorphism, $\widetilde{\phi }\in \mathbf{Aut%
}(\xi ),$ covering diffeomorphism $\phi \in \mathcal{G}\left( M\right) .$
Then we define action of $\widetilde{\phi }$ on sections $s\in C^{\infty
}\left( \xi \right) $ as 
\begin{equation*}
\widetilde{\phi }_{\ast }:s\longmapsto \widetilde{\phi }\circ s\circ \phi
^{-1},
\end{equation*}%
and 
\begin{equation*}
\widetilde{\phi }_{\ast }:A\longmapsto \widetilde{\phi }_{\ast }\circ A\circ 
\widetilde{\phi _{\ast }}^{-1},
\end{equation*}%
for differential operators.

\section{Classification of symbols}

In this section we fix a point $a\in M$ on the manifold $M$ and consider
orbits of symbols $\sigma \in S^{k}T_{a}\left( M\right) ,\ $ \ at this point
with respect to general linear group $G=\rm{GL}\left( T_{a}\left(
M\right) \right) ,$ for $k\geq 3$ and $\dim V\geq 2.$

The symbols are homogeneous functions of degree $k$ on the vector space $%
V=T_{a}^{\ast }\left( M\right) ,$ or in other words analytical functions $h$
on the space that satisfy the Euler equations%
\begin{equation}
\delta \left( h\right) =kh,  \label{Euler1}
\end{equation}%
where $\delta $ is the radial vector field on $V.$

\subsection{Euler equations}

Equation (\ref{Euler1}) defines a vector subbundle $\pi _{1}:\mathcal{E}%
_{1}\subset J^{1}\left( V_{0}\right) \rightarrow V_{0},\ $where $V_{0}=\
V\setminus 0,$ in the bundle $\pi _{1}:J^{1}\left( V_{0}\right) \rightarrow
V_{0}$ of $1$-jets of functions on $V_{0}.$

In the canonical coordinates $\left( x_{1},...,x_{n},u,u_{1},..,u_{n}\right) 
$ on $J^{1}\left( V\right) ,$ where $\left( x_{1},...,x_{n}\right) $ are
coordinates on the vector space $V,$ $n=\dim V,$ submanifold $\mathcal{E}%
_{1} $ is given by the equation%
\begin{equation*}
x_{1}u_{1}+\cdots +x_{n}u_{n}-ku=0.
\end{equation*}

Taking the prolongations of the Euler equations we get subbundles $\pi _{i}:%
\mathcal{E}_{i}\subset J^{i}\left( V_{0}\right) \rightarrow V_{0},$ $%
i=1,2,.. $ $k$.

Remark that submanifolds $\mathcal{E}_{l}$ are defined by equations 
\begin{equation*}
\sum_{i}x_{i}u_{\alpha +1_{i}}=\left( k-\left\vert \alpha \right\vert
\right) u_{\alpha },
\end{equation*}%
for all multi indices $\alpha =\left( \alpha _{1},...,\alpha _{n}\right) $
of the length $0\leq \left\vert \alpha \right\vert \leq l-1,$ and where $%
\alpha +1_{i}=\left( \alpha _{1},...\alpha _{i-1},\alpha _{i}+1,\alpha
_{i+1,}...,\alpha _{n}\right) .$

We define now subbundles $\mathcal{E}_{i}\subset J^{i}\left( V_{0}\right) ,$
for $i\geq k+1$ as solutions of the following systems%
\begin{eqnarray*}
\sum_{i}x_{i}u_{\alpha +1_{i}} &=&\left( k-\left\vert \alpha \right\vert
\right) u_{\alpha },\ \left\vert \alpha \right\vert \leq k-1, \\
u_{\beta } &=&0,k+1\leq \left\vert \beta \right\vert \leq i.
\end{eqnarray*}

In this case we also have inclusion of the first prolongations $\mathcal{E}%
_{i}^{\left( 1\right) }\subset \mathcal{E}_{i+1}$ and therefore the system 
\begin{equation*}
x_{1}u_{1}+\cdots +x_{n}u_{n}=ku,u_{\beta }=0,\left\vert \beta \right\vert
=k+1,
\end{equation*}%
defines the formally integrable equation in the sense of Spencer \cite{Sp},\\
\cite{Gold} and analytical (over $V$) solutions of this system are exactly
homogeneous polynomials of degree $k.$

Projections $\pi _{i,i-1}:J^{i}\left( V\right) \rightarrow J^{i-1}\left(
V\right) $ induce projections of prolongations $\pi _{i,i-1}:\mathcal{E}%
_{i}\rightarrow \mathcal{E}_{i-1}$ with kernels (or symbols) $g_{i}\subset
S^{i}\tau ^{\ast }.$

Thus we have exact sequences of vector bundles 
\begin{equation*}
0\rightarrow g_{i}\rightarrow \mathcal{E}_{i}\overset{\pi _{i,i-1}}{%
\longrightarrow }\mathcal{E}_{i-1}\rightarrow 0,
\end{equation*}%
for $i=1,2,...,$ where we put $\mathcal{E}_{0}=J^{0}\left( V_{0}\right) .$

Equation (\ref{Euler1}) shows that 
\begin{equation*}
g_{1}=\left\{ \theta \in T^{\ast }\left( V_{0}\right) ,\delta \rfloor \theta
=0\right\} ,
\end{equation*}%
and 
\begin{equation*}
g_{i}=\left\{ \theta \in S^{i}T^{\ast }\left( V_{0}\right) ,\delta \rfloor
\theta =0\right\} ,
\end{equation*}%
for all $i=2,..k,$ and $g_{i}=0,$ for $i=k+1,...$

Here we denoted by $\delta \rfloor \theta $ the inner product of vector $%
\delta $ and tensor $\theta .$

In other words, if we denote by $\delta ^{0}=\rm{Ann}\left( \delta
\right) \subset T^{\ast }\left( V_{0}\right) \rightarrow V_{0}$ the
subbundle of the cotangent bundle with fibres $\rm{Ann}\left( \delta
_{v}\right) \subset T_{v}^{\ast },$ then 
\begin{equation*}
g_{i}=S^{i}\left( \delta ^{0}\right) ,
\end{equation*}%
for $i=1,..k.$

\subsection{Splitting and invariant frame}

Let $\nabla $ be the standard affine connection on space $V,$ considered as
the affine manifold. Then the above construction allows us to define tensors 
\begin{equation*}
d_{l}f=\frac{1}{l!}\left( d_{\nabla }^{s}\right) ^{l}\left( f\right) \in
\Sigma ^{l}\left( V\right) ,
\end{equation*}%
for any smooth function $f$ on $V.$

The connection $\nabla $ is $G$-invariant and therefore the operators $d_{l}$
are also $G$-invariants:%
\begin{equation*}
A^{\ast }\left( d_{l}f\right) =d_{l}A^{\ast }\left( f\right) ,
\end{equation*}%
for any affine transformation $A:V\rightarrow V,$ and, therefore, for all $%
A\in G.$

Applying these operators to homogeneous functions $H$ of degree $k$ we get
tensors $d_{l}H\in g_{l},$ and because $j_{k}(H)=\left(
H,dH,..,d_{l}H,..,d_{k}H\right) $ we get splitting of the Euler bundles 
\begin{equation*}
\pi _{l}^{\mathcal{E}}:\mathcal{E}_{l}\rightarrow V_{0},
\end{equation*}%
into the direct sum of symbol bundles $\gamma _{i}:g_{i}\rightarrow V_{0},$%
\begin{equation*}
\pi _{l}^{\mathcal{E}}=\mathbf{1\oplus }\gamma _{1}\cdots \mathbf{\oplus }%
\gamma _{l},
\end{equation*}%
$l\leq k.$

Let now $u:J^{0}\left( V\right) =V\times \mathbb{R}\rightarrow \mathbb{R}$
be the standard fibre wise coordinate and let 
\begin{equation*}
\Theta _{l}=\frac{1}{l!}\left( \widehat{d_{\nabla }^{s}}\right) ^{l}\left(
u\right) \in \Sigma ^{l}\left( \mathcal{E}_{l}\right) ,
\end{equation*}%
be horizontal symmetric tensors on the equation.

The, due to the definition of the total lift, we get 
\begin{equation*}
j_{l}\left( H\right) ^{\ast }\left( \Theta _{l}\right) =d_{l}H,
\end{equation*}%
for any homogeneous polynomial $H.$

In other words, tensors $\Theta _{l}$ are universal differentials of $l$-th
order, $l=0,1,...$

In the standard jet coordinates $\left( x,u,...,u_{a},..\right) $ on the jet
spaces the universal tensors $\Theta _{l}$ are of the following form%
\begin{equation*}
\Theta _{l}=\sum_{\left\vert \alpha \right\vert =l}u_{\alpha }\frac{%
dx^{\alpha }}{\alpha !},
\end{equation*}%
and it is easy to check that 
\begin{equation}
\widehat{\delta }\rfloor \Theta _{l}=\left( k-l+1\right) \Theta _{l-1},
\label{Qind}
\end{equation}%
for all $l=1,...,k.$

In particular, $\widehat{\delta }\rfloor \Theta _{1}=k\Theta _{0},$ and in
the domain, where $\Theta _{0}=u\neq 0,$ we can represent any horizontal
vector field $X$ as sum: 
\begin{equation}
X=X_{0}+\frac{\Theta _{1}\left( X\right) }{k\Theta _{0}}\widehat{\delta },
\label{Xdecom}
\end{equation}%
where $X_{0}\in \ker \Theta _{1}$ is also horizontal field.

Therefore, due to this splitting, any horizontal $1$-form $\lambda $ could
be decomposed into the sum%
\begin{equation}
\lambda =\lambda ^{0}+\frac{\lambda \left( \widehat{\delta }\right) }{%
k\Theta _{0}}\Theta _{1},  \label{split}
\end{equation}%
where the form $\lambda ^{0}$ is considered as a form $\lambda $ restricted
on $\ker \Theta _{1}.$

Applying formula (\ref{Qind}), we get 
\begin{eqnarray*}
\Theta _{2} &=&\Theta _{2}^{0}+\frac{k-1}{2k\Theta _{0}}\Theta _{1}^{2}, \\
\Theta _{3} &=&\Theta _{3}^{0}+\frac{1}{k\Theta _{0}}\Theta _{2}^{0}\cdot
\Theta _{1}+\frac{k-1}{3!\left( k\Theta _{0}\right) ^{2}}\Theta _{1}^{3},
\end{eqnarray*}%
where $\Theta _{2}^{0}$ and $\Theta _{3}^{0}$ are quadratic and cubic
differential forms on horizontal vector fields from $\ker \Theta _{1}.$

These two tensors $\Theta _{2}^{0}$ and $\Theta _{3}^{0}$ will be important
for us. \ We say that a point $a_{2}\in \mathcal{E}_{2}$ is \textit{regular}
or \textit{singular }if the quadratic form $\Theta _{2}$ at this point is
regular and $u\left( a_{2}\right) \neq 0,$ and singular in the opposite case.

Remark, that regularity $\Theta _{2}$ is equivalent to regularity of $\Theta
_{2}^{0}.$ Indeed, assume that $X\in \ker \Theta _{2}$ and has decomposition
(\ref{Xdecom}). Then%
\begin{equation*}
X\rfloor \Theta _{2}=X_{0}\rfloor \Theta _{2}^{0}+\frac{\left( k-1\right)
\Theta _{1}\left( X\right) }{k\Theta _{0}}\Theta _{1},
\end{equation*}%
and therefore $X\in \ker \Theta _{2}$ if and only if $\Theta _{1}\left(
X\right) =0,$ i.e. $X=X_{0},$ and $X_{0}\in \ker \Theta _{2}^{0}.$

Denote by $\mathcal{E}_{2}^{0}\subset \mathcal{E}_{2}$ the domain of regular
points and by $\widehat{\Theta _{2}^{0}}$ the inverse tensor to $\Theta
_{2}^{0}$.

Then 
\begin{equation*}
\lambda =\widehat{\Theta _{2}^{0}}\rfloor \Theta _{3}^{0},
\end{equation*}%
is a horizontal differential $1$-form (on $\ker \Theta _{1}$) over the
regular domain $\mathcal{E}_{2}^{0}$.

Let $\widehat{\lambda _{1}}$ be the horizontal vector field in $\ker \Theta
_{1}$ dual to $\lambda ,$ i.e. $\lambda =\widehat{\lambda }\rfloor \Theta
_{2}^{0},$ or $\widehat{\lambda }=\lambda \rfloor \widehat{\Theta _{2}^{0}},$
and let 
\begin{equation*}
\Upsilon =\widehat{\lambda }\rfloor \Theta _{3}^{0},
\end{equation*}%
be a horizontal quadratic form on $\ker \Theta _{1}$.

Denote by $\Phi $ operator that corresponds to this form i.e. a linear
operator acting on horizontal vector fields in $\ker \Theta _{1}$ and such
that 
\begin{equation*}
\Theta _{3}^{0}\left( \widehat{\lambda },X,Y\right) =\Theta _{2}^{0}\left(
\Phi X,Y\right) ,
\end{equation*}%
for all horizontal vector fields $X,Y$ in $\ker \Theta _{1}.$

This operator as well as all above constructions well defined over the
regular domain only.

We say that a point $a_{3}\in \mathcal{E}_{3}$ is \textit{regular }if its
projection $a_{2}=\pi _{3,2}^{\mathcal{E}}\left( a_{3}\right) $ on the space
of 2-jets belongs to the regular domain $\mathcal{E}_{2}^{0}$ and horizontal
vector fields 
\begin{equation*}
e_{1}=\widehat{\lambda },e_{2}=\Phi \left( e_{1}\right) ,...,e_{n-1}=\Phi
\left( e_{n-2}\right) ,
\end{equation*}%
where $n=\dim V,$ are linear independent.

\subsection{Invariants of homogeneous forms}

Let $\mathcal{E}_{3}^{0}\subset \mathcal{E}_{3}$ be the domain of regular
3-jets. This domain is non empty and defined by some number of algebraic
inequalities. Therefore, $\mathcal{E}_{3}^{0}$ is dense into $\mathcal{E}%
_{3}.$
\newpage
\begin{theorem} $\phantom{a}$
\begin{enumerate}
\item Horizontal vector fields $e_{1},...,e_{n-1}$ and $e_{n}=\widehat{%
\delta }$ constitute $G$-invariant frame over regular domain $\mathcal{E}%
_{3}^{0}.$

\item Function $\Theta _{0}=u$ and coefficients of tensors $\Theta
_{l}^{0},l=2,...,k$ in the frame $\left( e_{1},...,e_{n-1}\right) $ in $\ker
\Theta _{1}$ are $G$-invariants. They are rational functions over the
regular domain $\mathcal{E}_{3}^{0}$ and they generate all rational
differential $G$-invariants of homogeneous forms on $V.$
\end{enumerate}
\end{theorem}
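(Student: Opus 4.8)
The plan is to show that every object entering the construction is manufactured from $G$-invariant data, so that the frame and the tensor components come out invariant for free, and then to exploit the frame as a moving frame to get completeness. First I would record that the standard affine connection $\nabla$ on the vector space $V$ and the radial field $\delta$ are preserved by the affine, in particular linear, action of $G=\mathrm{GL}(T_a(M))$. Since $d^{s}_{\nabla}$ is built from $\nabla$, the universal tensors $\Theta_l$ are $G$-invariant, which is exactly the content of the identity $A^{\ast}(d_l f)=d_l A^{\ast}(f)$, and the splitting (\ref{split}) uses only $\widehat{\delta}$, $\Theta_0=u$ and $\Theta_1$, which are themselves invariant. Consequently the reduced forms $\Theta_2^{0},\Theta_3^{0}$, the inverse tensor $\widehat{\Theta_2^{0}}$, the one-form $\lambda=\widehat{\Theta_2^{0}}\rfloor\Theta_3^{0}$, its dual field $\widehat{\lambda}$, the operator $\Phi$, and hence $e_i=\Phi^{\,i-1}(\widehat{\lambda})$ are all $G$-invariant. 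Over $\mathcal{E}_3^{0}$ the fields $e_1,\dots,e_{n-1}$ are linearly independent by the very definition of regularity and lie in $\ker\Theta_1$, while $e_n=\widehat{\delta}$ is transversal to $\ker\Theta_1$ because $\Theta_1(\widehat{\delta})=k\Theta_0=ku\neq0$ there by (\ref{Qind}); so the $n$ fields form a frame, and by the foregoing it is $G$-invariant. This settles part (1).

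For part (2), the components $\Theta^{0}_{l}(e_{i_1},\dots,e_{i_l})$ together with $u$ are $G$-invariant functions, since both the tensors and the frame are invariant, and they are rational on $\mathcal{E}_3^{0}$ because $\widehat{\Theta_2^{0}}$ and all subsequent objects are obtained from the jet coordinates by dividing by the nonvanishing determinant $\det\Theta_2^{0}$ and performing polynomial operations. The decisive point is completeness. Here I would use that $\mathrm{GL}(V)$ acts simply transitively on the frames of $V$: the invariant frame defines an equivariant moving frame $\rho:\mathcal{E}_3^{0}\to G$ with $\rho(gp)=g\,\rho(p)$, characterized by $\rho(p)\cdot(\text{reference frame})=(e_1(p),\dots,e_{n-1}(p),\widehat{\delta}(p))$. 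Since $\widehat{\delta}$ over $v$ is the radial vector $v$ itself, applying $\rho(p)^{-1}$ normalizes $p$ to a jet sitting over the fixed reference point with the reference frame as its invariant frame. Note also that a linear map fixing a full frame is the identity, so $G$ acts freely on the regular domain and its orbits have the full dimension $n^{2}$.

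In the normalized position the components of $\Theta_2^{0},\dots,\Theta_k^{0}$ are exactly the listed invariants and the value of $\Theta_0$ is $u$; moreover the splitting formulas displayed just before the theorem express each $\Theta_l$ through $\Theta_0$, $\Theta_1$ and the $\Theta_m^{0}$ with $m\leq l$, recovering the entire $k$-jet, that is, the homogeneous form itself. Thus the normalized jet is a rational function of the invariants alone, so two regular jets lie in the same $G$-orbit if and only if all these invariants agree, and any rational $G$-invariant is a rational function of $u$ and the components of $\Theta^{0}_{l}$. I expect the main obstacle to be precisely this completeness step: verifying rigorously that $\rho$ is well defined, rational and equivariant on the dense domain $\mathcal{E}_3^{0}$, and that the reconstruction map from invariants back to normalized jets is rational and genuinely inverse to the normalization, so that nothing is lost by working with the reduced tensors $\Theta^{0}_{l}$ rather than the full $\Theta_l$. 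Density of $\mathcal{E}_3^{0}$ then upgrades generation on the regular domain to generation of all rational differential invariants.
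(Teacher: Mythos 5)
Your proposal is correct, but there is nothing in the paper to compare it against line by line: the paper states this theorem without proof, treating it as an immediate consequence of the preceding construction, in which every ingredient (the flat connection $\nabla$, the radial field $\delta$, $u$, the tensors $\Theta _{l}$, the splitting (\ref{split}), $\lambda $, $\Phi $) is manifestly natural under $G$, and the frame property over $\mathcal{E}_{3}^{0}$ is the very definition of regularity. Your part (1) is exactly this implicit argument made explicit, including the correct use of (\ref{Qind}) to get transversality of $\widehat{\delta }$ to $\ker \Theta _{1}$. What you genuinely add is the completeness step in part (2): the equivariant moving frame $\rho :\mathcal{E}_{3}^{0}\rightarrow G$, freeness of the $G$-action on the regular domain, and rational reconstruction of the normalized jet from the invariants. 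The paper never writes this down; it only gestures at it afterwards, via the reconstruction remark $\widehat{d_{\nabla }^{s}}\left( \Theta _{i}\right) =\Theta _{i+1}$, the codimension count $\dim \mathcal{E}_{3}-n^{2}\geq n$, and the later appeal to the principle of $n$-invariants. Your normalization argument is the standard and correct way to close that gap. Three small points should be made explicit to make it airtight: (a) in the normalized position you must also recover $\Theta _{1}$, which is automatic because $\Theta _{1}$ annihilates $e_{1},...,e_{n-1}$ while (\ref{Qind}) gives $\Theta _{1}(\widehat{\delta })=k\Theta _{0}$, so $\Theta _{1}$ equals $ku$ times the last covector of the reference coframe and is itself a function of the invariant $u$; (b) the paper displays the splitting of $\Theta _{l}$ into $\Theta _{l}^{0}$ plus terms in lower $\Theta _{m}^{0}$, $\Theta _{1}$, $\Theta _{0}$ only for $l=2,3$, so the general $l$ requires the easy induction on (\ref{Qind}); (c) your $\rho $ is defined on $\mathcal{E}_{3}^{0}$, whereas the invariants of order up to $k$ live on $\mathcal{E}_{k}$, so $\rho $ must be pulled back along the jet projections to the preimage of $\mathcal{E}_{3}^{0}$ in $\mathcal{E}_{k}$, and one should note that orders $>k$ contribute nothing because $g_{i}=0$ for $i\geq k+1$, so that generation on $k$-jets already yields all rational differential $G$-invariants of homogeneous forms.
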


The total lift of the symmetric covariant differentials 
\begin{equation*}
d_{\nabla }^{s}:\Sigma ^{l}(V)\overset{d_{\nabla }}{\rightarrow }\Sigma
^{1}\left( V\right) \otimes \Sigma ^{l}(V)\overset{\text{Sym}}{\longrightarrow }%
\Sigma ^{l+1}(V)
\end{equation*}%
allow us to reconstruct universal tensor $\Theta _{i+1}$ under condition
that we know tensor $\Theta _{i}:\widehat{d_{\nabla }^{s}}\left( \Theta
_{i}\right) =\Theta _{i+1}.$

We'll extend the notion of regularity. Namely, we say that a 4-jet $a_{4}\in 
\mathcal{E}_{4}$ is \textit{regular }if its projection on the space of
3-jets is regular, $a_{3}\in \mathcal{E}_{3}^{0},$ and there are invariants $%
J_{1},...,J_{n}=\Theta _{0}$ of order $\leq 3,$ such that $\widehat{d}%
J_{1}\wedge \cdots \wedge \widehat{d}J_{n}\neq 0$ at point $a_{4}.$ Denote
by $\mathcal{E}_{4}^{0}\subset \mathcal{E}_{4}$ the domain of regular
4-jets. It is Zarissky open and therefore dense domain in $\mathcal{E}_{4}.$

These\ invariants $J_{1},J_{2},...,J_{n}=\Theta _{0}$ are in \textit{general
position}, i.e.%
\begin{equation*}
\widehat{d}J_{1}\wedge \cdots \wedge \widehat{d}J_{n}\neq 0
\end{equation*}%
in an open and dense domain in $\mathcal{E}_{4}.$

Then functions 
\begin{equation*}
J_{ab}=e_{a}\left( J_{b}\right) ,
\end{equation*}%
where $a,b=1,...,n,$ are rational $G$-invariants and they defined the
invariant frame%
\begin{equation*}
e_{a}=\sum_{b}J_{ab}\frac{d}{dJ_{b}},
\end{equation*}%
where $\frac{d}{dJ_{b}}$ are the Tresse derivatives.

Taking the total covariant derivatives 
\begin{equation*}
\widehat{\nabla }_{e_{a}}\left( e_{b}\right) =e_{b}\rfloor \widehat{%
d_{\nabla }}\left( e_{b}\right) ,
\end{equation*}%
and decomposing them in the invariant frame we get%
\begin{equation*}
\widehat{\nabla }_{e_{a}}\left( e_{b}\right) =\sum_{c}\Gamma _{ab}^{c}e_{c},
\end{equation*}%
where Christoffel symbols $\Gamma _{ab}^{c}$ are rational differential $G$%
-invariants too.

This data 
\begin{equation}
J=\left( J_{1},J_{2},...,J_{n}=\Theta _{0}\right) ,U=\left( J_{ab}\right)
,\Gamma =\left( \Gamma _{ab}^{c}\right)  \label{data}
\end{equation}

completely defines invariant frame by $U,$ the total covariant differential $%
\widehat{d_{\nabla }}$ \ by $\Gamma $ and, therefore, all universal tensors $%
\Theta _{l},$ because $J_{n}=\Theta _{0}.$

\begin{theorem}
For a given data (\ref{data}) all rational differential invariants of
rational functions of degree$\ k$ $\geq 3,n\geq 2,$ are rational functions
of invariants $J$ and Tresse derivatives%
\begin{equation*}
\frac{dU}{dJ},\quad\frac{d\Gamma }{dJ}.
\end{equation*}
\end{theorem}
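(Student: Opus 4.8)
The plan is to show that the two generating systems for the invariant field --- the coefficients of the universal tensors furnished by the previous theorem, and the data (\ref{data}) together with its Tresse derivatives --- generate the same field, the passage between them being the reconstruction $\widehat{d_{\nabla}^{s}}\left( \Theta _{i}\right) =\Theta _{i+1}$. First I would invoke the previous theorem: every rational differential $G$-invariant of a homogeneous form on $V$ is a rational function of $\Theta _{0}=u=J_{n}$ and of the coefficients of the tensors $\Theta _{l}^{0}$, $l=2,\dots ,k$, written in the invariant frame $\left( e_{1},\dots ,e_{n-1}\right) $ of $\ker \Theta _{1}$. Hence it is enough to express each of these coefficients as a rational function of the entries of $J$, $U$ and $\Gamma $ and of their iterated Tresse derivatives.

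Next I would exploit that all universal tensors are built from $\Theta _{0}$ by iterating the total covariant differential, $\Theta _{i+1}=\widehat{d_{\nabla}^{s}}\left( \Theta _{i}\right) $, while the splitting formulas (of the type $\Theta _{2}=\Theta _{2}^{0}+\tfrac{k-1}{2k\Theta _{0}}\Theta _{1}^{2}$) recover the reduced tensors $\Theta _{l}^{0}$ from the $\Theta _{i}$ and $\Theta _{0}$ by rational operations. Written in the invariant frame, the covariant differential acts on a tensor with invariant coefficients by applying the frame fields $e_{a}$ to those coefficients and correcting by the Christoffel symbols $\Gamma _{ab}^{c}$; and since $e_{a}=\sum _{b}J_{ab}\,\frac{d}{dJ_{b}}=\sum _{b}U_{ab}\,\frac{d}{dJ_{b}}$ on the regular domain where $U$ is invertible, each such frame derivative is a $U$-weighted combination of Tresse derivatives. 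Starting from $\Theta _{0}=J_{n}$, whose frame derivatives $e_{a}\left( J_{n}\right) =J_{an}$ are precisely the entries of $U$, an induction on $l$ would then show that every coefficient of every $\Theta _{l}^{0}$ is a rational function of $J$, $U$, $\Gamma $ and their Tresse derivatives. The reverse inclusion is immediate: $J$, $U$, $\Gamma $ are $G$-invariants and the Tresse derivatives $\frac{d}{dJ_{b}}$ preserve invariance, so the field they generate consists of invariants; combining the two inclusions identifies this field with the whole invariant field.

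The technical core of the argument is the closure of this induction. One must verify that at each step no invariant outside the Tresse-differential algebra of (\ref{data}) is introduced --- that is, that the non-holonomy of the invariant frame is itself controlled by the data: the commutators $\left[ e_{a},e_{b}\right] $, equivalently the torsion of the standard connection $\nabla $, must be expressed through $U$, $\Gamma $ and their first Tresse derivatives (torsion-freeness gives $\left[ e_{a},e_{b}\right] =\sum _{c}\left( \Gamma _{ab}^{c}-\Gamma _{ba}^{c}\right) e_{c}$), and the flatness of $\nabla $ --- its total lift $\widehat{d_{\nabla}^{s}}$ has vanishing curvature --- must be used to rewrite the derivatives of the Christoffel symbols so that the recursion stays inside the algebra generated by $J$, $U$, $\Gamma $ and their Tresse derivatives. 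The main obstacle I anticipate is exactly this book-keeping: showing that the structure equations of $\nabla $ close the system rather than forcing genuinely new invariants at each order. Here the general-position hypothesis defining $\mathcal{E}_{4}^{0}$, which guarantees that $J_{1},\dots ,J_{n}$ are coordinates and that $U$ is invertible, is what legitimizes the conversion $e_{a}=\sum _{b}U_{ab}\,\frac{d}{dJ_{b}}$ and the divisions by the relevant Jacobians on which the whole reduction rests.
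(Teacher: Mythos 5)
Your proposal is correct and takes essentially the same approach as the paper, whose proof of this theorem consists of the remark immediately preceding it: the data $(J,U,\Gamma)$ determines the invariant frame via $e_a=\sum_b U_{ab}\frac{d}{dJ_b}$, the total covariant differential via $\Gamma$, and hence, starting from $\Theta_0=J_n$ and iterating $\widehat{d_{\nabla}^{s}}$, all universal tensors $\Theta_l$ and their reductions $\Theta_l^0$, so the generators of the previous theorem lie in the field generated by $J$, $U$, $\Gamma$ and their iterated Tresse derivatives. The only inessential difference is your anticipated \emph{main obstacle}: no flatness or structure equations of $\nabla$ are actually needed to close the induction, since that field is automatically closed under Tresse derivation (by the chain rule, the Tresse derivative of a rational function of the generators is again a rational function of generators and higher Tresse derivatives) and hence under the frame derivatives $e_a$, so the recursion closes for free.
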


Remark also that 
\begin{equation*}
\dim \mathcal{E}_{3}=\frac{n\left( n^{2}+3n+8\right) }{6},
\end{equation*}%
and therefore codimensions of regular orbits are 
\begin{equation*}
\dim \mathcal{E}_{3}-n^{2}\geq n,
\end{equation*}%
for $n\geq 2.$

\subsection{Orbits of homogeneous forms}

Let $h$ be a homogenous function of order $k$. We say that $h$ is a \textit{%
regular function} if its 4-jet $j_{4}\left( h\right) $ has non empty
intersection with regular domain $\mathcal{E}_{4}^{0}.$

Let now $I$ be an invariant of order $l,$ then by $I\left( h\right)
=j_{l}\left( h\right) ^{\ast }\left( I\right) $ we will denote the value of
this invariant on the homogeneous function $h.$

Remark that $I\left( h\right) $ is a rational function defined on open and
dense set in $V.$

Respectively, by $J\left( h\right) ,U\left( h\right) ,\Gamma \left( h\right) 
$ we'll denote the values of data (\ref{data}) on function $h.$

Invariants $J$ are in general position, therefore there is an open domain $%
\mathcal{O}_{h}\mathcal{\subset }V,$ where functions $J\left( h\right)
=\left( J_{1}\left( h\right) ,...,h\right) $ are coordinates and, therefore,
functions $U\left( h\right) $ and $\Gamma \left( h\right) $ are functions of 
$J:$ $U=U\left( J\right) ,\Gamma =\Gamma \left( J\right) .$

Geometrically, we'll consider the following rational map 
\begin{eqnarray*}
D_{h} &:&V\rightarrow \mathbb{R}^{N}, \\
D_{h} &:&v\in V\mapsto \left( R=J\left( h\right) \left( v\right) ,Q=U\left(
h\right) \left( v\right) ,S=\Gamma \left( h\right) \left( v\right) \right) ,
\end{eqnarray*}%
where 
\begin{equation*}
N=\frac{n\left( n^{2}+2n+3\right) }{2},
\end{equation*}%
and the standard coordinates in $\mathbb{R}^{N}$ we denoted by $\left(
R,Q,S\right) ,$ and $R=\left( R_{1},...,R_{n}\right) ,$ $Q=\left(
Q_{ab}\right) ,S=\left( S_{ab}^{c}\right) .$

Then the image $\Pi _{h}\subset \mathbb{R}^{N}$ of $D_{h}$ is an algebraic
variety and the image $D_{h}\left( \mathcal{O}_{h}\right) $ of the domain $%
\mathcal{O}_{h}$ is the graph of functions $U\left( R\right) ,\Gamma \left(
R\right) :$%
\begin{equation*}
Q=U\left( R\right) ,S=\Gamma \left( R\right) .
\end{equation*}

Remark that linear transformations $A\in G$ leave invariant algebraic
relations between differential invariants and therefore do not change the
algebraic manifold $\Pi _{h}:$%
\begin{equation*}
\Pi _{A^{\ast }\left( h\right) }=\Pi _{h}.
\end{equation*}

\begin{theorem}
Two regular polynomials $h$ and $h^{\prime }$ of degree $k\geq 3,n\geq 2,$
are $G$-equivalent if and only if $\ \Pi _{h}=\Pi _{h^{\prime }},$ for the
some invariants $J_{1},...,J_{n}=\Theta _{0}$ in general position.
\end{theorem}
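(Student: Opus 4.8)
The plan is to prove both directions of the "if and only if." The forward direction is the easy one: if $h$ and $h' = A^\ast(h)$ are $G$-equivalent, then because every $J_a$, $J_{ab} = Q_{ab}$, and $\Gamma^c_{ab} = S^c_{ab}$ is a $G$-invariant (established in the second statement of the first theorem and the remark immediately preceding this one), the rational map $D_{h'}$ differs from $D_h$ only by precomposition with the linear change of variables $A$ on $V$. Since the target coordinates $(R,Q,S)$ are computed from $G$-invariants, the images coincide: $\Pi_{A^\ast(h)} = \Pi_h$. This is exactly the content of the displayed relation $\Pi_{A^\ast(h)}=\Pi_h$ stated just above, so the forward direction requires only assembling that observation.

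The substance lies in the converse: assuming $\Pi_h = \Pi_{h'}$, I must reconstruct an element $A \in G$ carrying $h$ to $h'$. The key is that the algebraic variety $\Pi_h$ encodes, via its graph part $D_h(\mathcal{O}_h)$, the functions $U(R)$ and $\Gamma(R)$, i.e. the frame coefficients $Q=U(R)$ and the Christoffel symbols $S=\Gamma(R)$ expressed in the natural coordinates $R = J(h)$. I would first argue that on the respective regular domains $\mathcal{O}_h$ and $\mathcal{O}_{h'}$, the invariant coordinates $J(h)$ and $J(h')$ are genuine local coordinates (this is the general-position condition $\widehat{d}J_1 \wedge \cdots \wedge \widehat{d}J_n \neq 0$ defining $\mathcal{E}_4^0$), so that both $h$ and $h'$ are determined, as functions of $R$, by the same reconstruction data. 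Concretely, since $J_n = \Theta_0 = u$ is itself one of the invariants and takes the value $h$ along the jet, knowing $R$ as a function on $V$ recovers $h$; and the data $(J,U,\Gamma)$ "completely defines the invariant frame, the total covariant differential, and therefore all universal tensors $\Theta_l$," as asserted in the paragraph following $(\ref{data})$.

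The reconstruction step is the main obstacle, and I would handle it as follows. From $\Pi_h = \Pi_{h'}$ the two graphs $Q = U(R)$, $S = \Gamma(R)$ agree as subsets of $\mathbb{R}^N$, so $h$ and $h'$ produce identical $R$-parametrized frame and connection data. Both $D_h$ and $D_{h'}$ are then local diffeomorphisms onto a common neighborhood in $R$-space, and the composite $\Psi = D_{h'}^{-1} \circ D_h : \mathcal{O}_h \to \mathcal{O}_{h'}$ is a well-defined diffeomorphism pulling back the invariant frame $(e_1,\dots,e_n)$ of $h'$ to that of $h$ and intertwining the covariant differentials. Because the frame is built from the standard affine connection $\nabla$ on $V$, whose Christoffel symbols in the linear coordinates vanish, the matching of the reconstructed $\Gamma$-data forces $\Psi$ to be an affine, in fact linear, map: a diffeomorphism of $V$ preserving the flat connection and sending one invariant frame to the other must be the restriction of an element $A \in \mathrm{GL}(V) = G$. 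The final check is that $A^\ast(h) = h'$, which follows since $J_n = u$ equals the homogeneous function along each jet, so equality of the $R$-coordinates transported by $A$ gives equality of the functions. I expect the delicate point to be showing that $\Psi$ is genuinely the restriction of a \emph{linear} map rather than merely a frame-preserving diffeomorphism; this is where the flatness of $\nabla$ and the homogeneity (Euler) condition $\delta(h)=kh$, which ties $e_n = \widehat{\delta}$ to the radial vector field, must be used together to rigidify $\Psi$ into an element of $G$.
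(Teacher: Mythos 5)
Your proposal is correct and follows essentially the same route as the paper: both directions rest on the invariance remark $\Pi_{A^{\ast}(h)}=\Pi_h$ for necessity, and for sufficiency both construct the diffeomorphism by matching the natural coordinates $z=J(h)$ and $z'=J(h')$ (your $\Psi=D_{h'}^{-1}\circ D_h$ is exactly the paper's map $z\mapsto z'$), observe that equality of the $\Gamma$-data forces it to preserve the flat affine connection, and use $J_n=\Theta_0=u$ to carry one polynomial to the other. The only difference is one of detail: the paper simply asserts ``preserves the affine connection, thus $A\in G$,'' whereas you correctly flag and sketch the rigidification from affine to linear via the invariant frame vector $e_n=\widehat{\delta}$ matching the radial field, which is the honest way to close that step.
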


\begin{proof}
The above remark shows the necessity of the theorem condition.

Let's now $\Pi _{h}=\Pi _{h^{\prime }}$. Then domains $\mathcal{O}_{h}$ and $%
\mathcal{O}_{h^{\prime }}$ we choose in such a way that $\Pi \left( \mathcal{%
O}_{h}\right) =\Pi \left( \mathcal{O}_{h^{\prime }}\right) $ is common for
both functions. Over domains $\mathcal{O}_{h}$ we have%
\begin{equation*}
U_{ab}=U_{ab}\left( z_{1},...,z_{n}\right) ,\Gamma _{ab}^{c}=\Gamma
_{ab}^{c}\left( z_{1},...,z_{n}\right)
\end{equation*}%
in coordinates 
\begin{equation*}
z_{1}=J_{1}\left( h\right) ,...,z_{n-1}=J_{n-1}\left( h\right) ,,,,z_{n}=h,
\end{equation*}%
and \ over domain $\mathcal{O}_{h^{\prime }}$ we get the same relations and
functions 
\begin{equation*}
U_{ab}=U_{ab}\left( z_{1}^{\prime },...,z_{n}^{\prime }\right) ,\Gamma
_{ab}^{c}=\Gamma _{ab}^{c}\left( z_{1}^{\prime },...,z_{n}^{\prime }\right)
\end{equation*}%
in coordinates%
\begin{equation*}
z_{1}^{\prime }=J_{1}\left( h^{\prime }\right) ,...,z_{n-1}^{\prime
}=J_{n-1}\left( h^{\prime }\right) ,,,,z_{n}^{\prime }=h^{\prime }.
\end{equation*}%
Therefore, the diffeomorphism $A:\mathcal{O}_{h}\rightarrow \mathcal{O}%
_{h^{\prime }},$ where $z\rightarrow z^{\prime },$ transforms $h^{\prime }$
to $h$ and preserves the affine connection. Thus, $A\in G$ and $A^{\ast
}\left( h^{\prime }\right) =h.$
\end{proof}

\begin{proposition}
Isotropy Lie algebra of an $\rm{GL}\left( V\right) $-orbit of a regular
homogeneous $k$-form is trivial for $k\geq 3$ and $n\geq 2.$
\end{proposition}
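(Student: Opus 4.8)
The plan is first to describe the isotropy Lie algebra explicitly. A matrix $X\in\mathfrak{gl}(V)$ lies in it precisely when the one-parameter subgroup $A_t=\exp(tX)\subset G$ fixes $h$, i.e. $A_t^\ast(h)=h$ for all $t$; writing $\widetilde X$ for the linear vector field $v\mapsto Xv$ on $V$, this is equivalent to $\widetilde X(h)=0$. The decisive tool is the $G$-equivariance of the differential invariants constructed in the previous theorems. For any differential invariant $I$ and any $A\in G$ one has $I(A^\ast h)=A^\ast(I(h))=I(h)\circ A$, because $I$ is invariant under the prolonged action and the evaluation $I(h)=j_l(h)^\ast(I)$ intertwines the action on forms with the action on $V$.

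Next I would feed the invariants $J_1,\dots,J_n=\Theta_0$ in general position into this equivariance. If $X$ belongs to the isotropy algebra, then $A_t^\ast(h)=h$ gives $J_a(h)\circ A_t=J_a(A_t^\ast h)=J_a(h)$ for every $a$ and every $t$, so each function $J_a(h)$ is constant along the flow of $\widetilde X$. Differentiating at $t=0$ yields $\widetilde X(J_a(h))=0$, that is $dJ_a(h)(\widetilde X)=0$, for $a=1,\dots,n$.

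Now regularity is invoked decisively. Since $h$ is regular its $4$-jet meets the regular domain $\mathcal{E}_4^0$, and there the differentials $\widehat d J_1,\dots,\widehat d J_n$ are independent; hence on a dense open set $\mathcal{O}_h\subset V$ the functions $J_1(h),\dots,J_n(h)$ form a coordinate system and their differentials $dJ_1(h),\dots,dJ_n(h)$ are linearly independent. A vector annihilated by $n$ linearly independent covectors in the $n$-dimensional tangent space must vanish, so $\widetilde X=0$ on $\mathcal{O}_h$. Because $\widetilde X_v=Xv$ vanishes for all $v$ in the nonempty open set $\mathcal{O}_h$, and such vectors span $V$, the endomorphism $X$ is zero; thus the isotropy Lie algebra is trivial.

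The hypotheses $k\geq 3$ and $n\geq 2$ enter exactly where regularity is used: only in this range do the constructions of Section 3 deliver $n$ functionally independent invariants $J_1,\dots,J_n$ with nonvanishing $\widehat d J_1\wedge\cdots\wedge\widehat d J_n$ on a dense open domain. The main obstacle is therefore not computational but the careful bookkeeping of equivariance --- establishing $I(A^\ast h)=I(h)\circ A$ from invariance under the prolonged $G$-action --- together with the correct use of regularity to guarantee that the annihilating invariants are genuinely independent on an open subset of $V$.
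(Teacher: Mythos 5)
Your proof is correct and takes essentially the same route as the paper's: an element $X$ of the isotropy Lie algebra must annihilate the invariants $J_{1},\dots ,J_{n}$ evaluated on $h$, and regularity makes these functions independent on a nonempty open subset of $V$, forcing the associated linear vector field, and hence $X$ itself, to vanish. You merely spell out the equivariance bookkeeping $I\left( A^{\ast }h\right) =I\left( h\right) \circ A$ and the final step from vanishing of the vector field on an open set to $X=0$, which the paper's short proof leaves implicit.
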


\begin{proof}
Any vector field $X\in \mathfrak{gl}\left( V\right) $ from the isotropy Lie
algebra of a regular homogeneous form of degree $k\geq 3,$ preserves also
invariants determining orbits of 3-jets . Then, $X\left( J_{i}\left(
H\right) \right) =0$ in an open domain and, therefore, $X=0.$
\end{proof}

\begin{corollary}
Let $H$ be a regular homogenous $k$-form, $k\geq 3$. Then the isotropy group
of $H$ in $\rm{GL}\left( V\right) $ is finite.
\end{corollary}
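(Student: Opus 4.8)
The plan is to realize the isotropy group as a real linear algebraic group and then combine the triviality of its Lie algebra (the preceding Proposition) with this algebraic structure to force finiteness. First I would write the isotropy group explicitly as
\[
G_{H}=\left\{ A\in \mathrm{GL}\left( V\right) \mid H\circ A=H\right\} .
\]
Since $H$ is a homogeneous polynomial of degree $k$, the identity $H\left( Ax\right) =H\left( x\right) $ for all $x\in V$ is equivalent to the vanishing of all coefficients of the polynomial $x\mapsto H\left( Ax\right) -H\left( x\right) $, and these coefficients are polynomials in the matrix entries of $A$. Hence $G_{H}$ is a Zariski-closed subgroup of $\mathrm{GL}\left( V\right) $, i.e. a real linear algebraic group, and in particular a closed Lie subgroup whose Lie algebra is precisely the isotropy Lie algebra treated in the Proposition.

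Next I would invoke the Proposition directly: for a regular homogeneous $k$-form with $k\geq 3$ and $n\geq 2$ the isotropy Lie algebra is trivial. Therefore $\dim G_{H}=0$, the identity component $G_{H}^{0}$ is the single point $\left\{ \mathrm{id}\right\} $, and equivalently $G_{H}$ is a discrete subgroup of $\mathrm{GL}\left( V\right) $.

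Finally I would upgrade discreteness to finiteness, which is the only genuine step. Discreteness by itself is insufficient, since a discrete subgroup of a Lie group may well be infinite; here I would exploit the algebraicity of $G_{H}$. A real algebraic set has only finitely many connected components (Whitney), and in a topological group the components are the cosets of the identity component, so each is homeomorphic to $G_{H}^{0}$ and thus of dimension $\dim G_{H}^{0}=0$, i.e. a single point. Consequently $G_{H}$ consists of finitely many points and is finite. The main obstacle is exactly this passage from a zero-dimensional isotropy Lie algebra to a finite isotropy group: the Proposition supplies only discreteness, and it is the algebraic nature of the stabilizer that excludes an infinite discrete stabilizer and delivers the claim.
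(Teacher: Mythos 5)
Your proposal is correct and is essentially the paper's own argument: the paper's proof reads ``The isotropy group is discrete, Zariski closed and therefore finite,'' which is exactly your chain (trivial isotropy Lie algebra $\Rightarrow$ discreteness, algebraicity of the stabilizer $\Rightarrow$ Zariski-closedness, and finiteness of connected components of a real algebraic set $\Rightarrow$ finiteness). You have merely supplied the details, in particular the appeal to Whitney's finiteness theorem, that the paper leaves implicit.
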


\begin{proof}
The isotropy group is discrete, Zarissky closed and therefore finite.
\end{proof}

\begin{corollary}
\label{ISOTROPY}Let $H$ be a regular homogenous $k$-form, $k\geq 3.$ Then
there is a neighborhood $\mathcal{O}_{H}$ of $H$ in the space of homogeneous
forms of degree $k$ such that for any form $H^{\prime }\in \mathcal{O}_{H},$
which belongs to the $\rm{GL}\left( V\right) $-orbit of $H,$ there is
and unique linear transformation $A\in \rm{GL}\left( V\right) ,$ such
that $A^{\ast }\left( H^{\prime }\right) =H.$
\end{corollary}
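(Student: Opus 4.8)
The plan is to realize the statement as a local slice property of the orbit map and to derive it from the two preceding facts: the isotropy Lie algebra of $H$ is trivial, and the isotropy group $\mathrm{Stab}(H)=\{A\in\mathrm{GL}(V)\mid A^{\ast}(H)=H\}$ is finite. Introduce the orbit map
\[
\mu:\mathrm{GL}(V)\longrightarrow S^{k}(V^{\ast}),\qquad \mu(A)=(A^{-1})^{\ast}(H),
\]
so that $\mu(A)=H'$ is equivalent to $A^{\ast}(H')=H$, with $\mu(\mathrm{id})=H$. Its image is exactly the $\mathrm{GL}(V)$-orbit $\mathcal{O}(H)$ of $H$, and $\mu^{-1}(H)=\mathrm{Stab}(H)$.

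First I would differentiate $\mu$ at the identity. For $X\in\mathfrak{gl}(V)$ one obtains $d\mu_{\mathrm{id}}(X)=-X\cdot H$, the infinitesimal action of $X$ on the form $H$, whose kernel is precisely the isotropy Lie algebra. By the preceding Proposition this kernel is zero, so $d\mu_{\mathrm{id}}$ is injective; since $\mathrm{Stab}(H)$ is finite, the orbit $\mathcal{O}(H)$ has dimension $n^{2}=\dim\mathrm{GL}(V)$, and therefore $d\mu_{\mathrm{id}}\colon\mathfrak{gl}(V)\to T_{H}\mathcal{O}(H)$ is a linear isomorphism. Invoking that $\mathrm{GL}(V)$-orbits of a polynomial (algebraic) action are locally closed, so that $\mathcal{O}(H)$ is an embedded submanifold of $S^{k}(V^{\ast})$ near $H$, the inverse function theorem applies: there is a neighborhood $W$ of $\mathrm{id}$ in $\mathrm{GL}(V)$ such that $\mu|_{W}$ is a diffeomorphism onto an open neighborhood $\mathcal{V}$ of $H$ in $\mathcal{O}(H)$. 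Then I would take $\mathcal{O}_{H}$ to be any neighborhood of $H$ in $S^{k}(V^{\ast})$ with $\mathcal{O}_{H}\cap\mathcal{O}(H)\subset\mathcal{V}$: for $H'\in\mathcal{O}_{H}$ lying in the orbit there is a unique $A\in W$ with $\mu(A)=H'$, i.e.\ $A^{\ast}(H')=H$, giving existence and the asserted uniqueness.

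The main obstacle, and the point that must be treated honestly, is that when $\mathrm{Stab}(H)$ is a nontrivial finite group the equation $A^{\ast}(H')=H$ has $|\mathrm{Stab}(H)|$ solutions in all of $\mathrm{GL}(V)$, its full solution set being a coset of $\mathrm{Stab}(H)$. Hence ``unique'' cannot mean unique in $\mathrm{GL}(V)$; the content of the corollary is that there is a unique solution lying in the fixed small neighborhood $W$ of the identity, equivalently the solution depending continuously on $H'$ and tending to $\mathrm{id}$ as $H'\to H$. This nontriviality reflects that the natural coordinates $J_{1}(H),\dots,J_{n}(H)$ give only a local, not a global, diffeomorphism of $V$, so $\mathrm{Stab}(H)$ permutes the finitely many points of each fibre; separating these finitely many preimages of $H$ by the neighborhood $W$ furnished by the immersion property is exactly what makes the local selection of $A$ well defined.

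An alternative and more computational route to the same conclusion is to fix a point in a domain where $J(H)=(J_{1}(H),\dots,J_{n}(H))$ is a chart and to read off $A$ as the unique matching of the natural coordinate charts of $H$ and $H'$ compatible with the invariant frame $e_{1},\dots,e_{n}$; since the frame and the coordinates are $G$-invariant, such a matching both exists (because $H'$ is in the orbit) and is rigid near the base point (because the frame trivializes the problem). I would nonetheless favour the inverse-function-theorem argument above, as it isolates the only real issue, namely the passage from the injectivity of $d\mu_{\mathrm{id}}$ and the discreteness of $\mathrm{Stab}(H)$ to a genuine local diffeomorphism of the orbit map.
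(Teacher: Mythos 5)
Your proof is correct, and it supplies an argument the paper does not actually give: Corollary \ref{ISOTROPY} appears there with no proof, presented as an immediate consequence of the preceding proposition (trivial isotropy Lie algebra) and corollary (finite isotropy group). Your route --- the orbit map $\mu(A)=(A^{-1})^{\ast}(H)$, injectivity of $d\mu_{\mathrm{id}}$ from the vanishing of the isotropy algebra, local closedness of orbits of an algebraic action, and the inverse function theorem --- is the standard way to convert those two facts into the local slice statement, so in substance it is the derivation the authors had in mind, made explicit. Note also how the labor divides: uniqueness of $A$ inside a small neighborhood $W$ of $\mathrm{id}$ needs only discreteness of $\mathrm{Stab}(H)$, while it is existence of a solution \emph{inside} $W$ for every nearby orbit point that genuinely requires the embedded-orbit (local closedness) input; your proof uses both in the right places.

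More importantly, your ``main obstacle'' paragraph is not a quibble but a genuine correction to the statement. For $H'$ in the orbit, the solution set of $A^{\ast}(H')=H$ is a coset $A_{0}\,\mathrm{Stab}(H)$, whose cardinality is independent of $H'$ and of the neighborhood $\mathcal{O}_{H}$; shrinking $\mathcal{O}_{H}$ therefore cannot produce uniqueness in all of $\mathrm{GL}(V)$, and taking $H'=H$ already gives $|\mathrm{Stab}(H)|$ solutions. In fact, for every even $k$ the element $-\mathrm{id}$ lies in $\mathrm{Stab}(H)$ for \emph{every} form $H$, since $(-\mathrm{id})^{\ast}H=(-1)^{k}H=H$, so the corollary as literally written fails for all even $k$. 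The true statement --- the one your argument proves --- is uniqueness of $A$ in a fixed small neighborhood $W$ of the identity, equivalently a continuous selection $H'\mapsto A(H')$ with $A(H)=\mathrm{id}$. This is also exactly what the paper needs downstream: the Wagner connection is built from parallel-transport isomorphisms $A_{p,p'}$ with $A_{p,p}=\mathrm{id}$ depending smoothly on $p'$, which is precisely the local-uniqueness version, not uniqueness in $\mathrm{GL}(V)$.
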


\subsection{Example: Binary forms}

To make the general case more transparent we'll illustrate this approach on
example of binary forms. The results in this section are very closed to
classification given in \cite{BL2}.

The Euler equation for binary $k$-forms $\mathcal{E}_{k}\subset J^{k}\left(
V_{0}\right) $ has dimension $k+3$ with coordinates : $%
x_{1},x_{2},u,u_{1,0},...,u_{k,0}.$

The universal tensors $\Theta _{l}$ are of the form%
\begin{equation*}
\Theta _{l}=\sum_{\alpha _{1}+\alpha _{2}=l}u_{\alpha _{1},\alpha _{2}}\frac{%
dx_{1}^{\alpha _{1}}}{\alpha _{1}!}\frac{dx_{2}^{\alpha _{2}}}{\alpha _{2}!},
\end{equation*}%
for general $l,$ and especially 
\begin{eqnarray*}
\Theta _{0} &=&u_{0,0},\ \Theta _{1}=u_{1,0}dx_{1}+u_{0,1}dx_{2}, \\
\Theta _{2} &=&\frac{1}{2}\left( u_{2,0}dx_{1}^{2}+2u_{1,1}dx_{1}\cdot
dx_{2}+u_{0,2}dx_{2}^{2}\right) , \\
\Theta _{3} &=&\frac{1}{6}\left( u_{3,0}dx_{1}^{3}+3u_{2,1}dx_{1}^{2}\cdot
dx_{2}+3u_{1,2}dx_{1}\cdot dx_{2}^{2}+u_{0,3}dx_{2}^{3}\right) ,
\end{eqnarray*}%
for small $l.$

To construct the $G$- invariant frame we'll take two horizontal vector
fields 
\begin{equation*}
\widehat{\delta }=x_{1}\frac{d}{dx_{1}}+x_{2}\frac{d}{dx_{2}},
\end{equation*}%
and 
\begin{equation*}
\eta =u_{0,1}\frac{d}{dx_{1}}-u_{1,0}\frac{d}{dx_{2}},
\end{equation*}%
in order to have horizontal form $\Theta _{1}$ as an element of the dual
basis.

In splitting (\ref{split}) we have $\theta ^{0}\left( \delta \right) =0,$
therefore the second form in the $G$- invariant coframe should be
proportional to form%
\begin{equation*}
\rho =x_{2}dx_{1}-x_{1}dx_{2}.
\end{equation*}%
Then for the basic forms $dx_{1}$ and $dx_{2},$ we get%
\begin{eqnarray*}
dx_{1} &=&\frac{u_{0,1}}{ku}\rho +\frac{x_{1}}{ku}\Theta _{1}, \\
dx_{2} &=&-\frac{u_{1,0}}{ku}\rho +\frac{x_{1}}{ku}\Theta _{1}
\end{eqnarray*}%
and therefore the quadratic form $\Theta _{2}^{0}$ will be the following%
\begin{equation*}
\Theta _{2}^{0}=\frac{K_{2}}{\left( ku\right) ^{2}}\rho ^{2},
\end{equation*}%
where 
\begin{equation*}
K_{2}=\frac{1}{2}\left(
u_{2,0}u_{0,1}^{2}-2u_{1,1}u_{1,0}u_{0,1}+u_{0,2}u_{1,0}^{2}\right) .
\end{equation*}%
In the similar way we get%
\begin{equation*}
\Theta _{3}^{0}=\frac{K_{3}}{\left( ku\right) ^{3}}\rho ^{3},
\end{equation*}%
where 
\begin{equation*}
K_{3}=\frac{1}{6}\left(
u_{3,0}u_{0,1}^{3}-3u_{2,1}u_{0,1}^{2}u_{1,0}+3u_{1,2}u_{0,1}u_{1,0}^{2}-u_{0,3}u_{1,0}^{3}\right) .
\end{equation*}%
Therefore, the invariant form $\lambda $ equals 
\begin{equation*}
\lambda =\frac{1}{3}\frac{K_{3}}{K_{2}}\frac{\rho }{ku},
\end{equation*}%
or substituting $\lambda $ instead of $\rho $ we get%
\begin{equation*}
\Theta _{2}^{0}=\frac{9K_{2}^{3}}{K_{3}^{2}}\rho ^{2},\ \Theta _{3}^{0}=%
\frac{27K_{2}^{3}}{K_{3}^{2}}\rho ^{3}.\ 
\end{equation*}%
These forms are $G$-invariants, therefore:
\begin{itemize}
\item functions 
\begin{equation*}
J_{0}=u,\ J_{3}=\frac{K_{3}^{2}}{K_{2}^{3}}
\end{equation*}%
are differential invariants of binary forms,
\item horizontal forms 
\begin{equation*}
\left\langle \lambda ,\Theta _{1}\right\rangle
\end{equation*}%
give us $G$-invariant coframe, and
\item total vector fields%
\begin{equation*}
\left\langle \widehat{\lambda },\widehat{\delta }\right\rangle ,
\end{equation*}%
where%
\begin{equation*}
\widehat{\lambda }=\frac{3K_{3}}{2K_{2}^{2}}\left( u_{0,1}\frac{d}{dx_{1}}%
-u_{1,0}\frac{d}{dx_{2}}\right) ,
\end{equation*}%
form $G$-invariant frame.
\end{itemize}

Applying $\widehat{\lambda },\widehat{\delta }$ to differential invariants
we get also invariants. Thus,%
\begin{eqnarray*}
\widehat{\delta }\left( J_{0}\right) &=&kJ_{0},\ \widehat{\delta }\left(
J_{3}\right) =-kJ_{3}, \\
\widehat{\lambda }\left( J_{0}\right) &=&0,\ \widehat{\lambda }\left(
J_{3}\right) =J_{4},
\end{eqnarray*}%
where $J_{4}$ is a new invariant of order $4.$

Also, writing down the tensors $\Theta _{l}^{0}$ in invariant coframe we get%
\begin{equation*}
\Theta _{l}^{0}=\left( \frac{3K_{2}}{K_{3}}\right) ^{l}\sum_{\alpha
_{1}+\alpha _{2}=l}u_{\alpha _{1},\alpha _{2}}\frac{u_{0,1}^{\alpha _{1}}}{%
\alpha _{1}!}\frac{\left( -u_{1,0}\right) ^{\alpha _{2}}}{\alpha _{2}!}%
\lambda ^{l},
\end{equation*}%
and therefore functions 
\begin{equation*}
I_{l}=\left( \frac{3K_{2}}{K_{3}}\right) ^{l}\sum_{\alpha _{1}+\alpha
_{2}=l}u_{\alpha _{1},\alpha _{2}}\frac{u_{0,1}^{\alpha _{1}}}{\alpha _{1}!}%
\frac{\left( -u_{1,0}\right) ^{\alpha _{2}}}{\alpha _{2}!}
\end{equation*}%
are invariants of order $l,$ for $l=4,...,k.$

Remark, that in order 4 we have two invariants $J_{4}$ and $I_{4}.$ It is
easy to check that there is the following relation between them:%
\begin{equation*}
J_{4}=2J_{0}J_{3}^{2}I_{4}-3J_{3}^{2}-\left( 3-\frac{6}{k}\right) J_{3}.
\end{equation*}

Moreover, the Christoffel symbols of the standard affine connection in the
invariant frame are invariants of the 4th order also.

Computing them we get:%
\begin{equation*}
\Gamma _{11}^{1}=J_{3}-\frac{J_{4}}{2J_{3}},\Gamma _{11}^{2}=\frac{J_{3}}{k}%
,\Gamma _{12}^{1}=1-k,\Gamma _{12}^{2}=0,\ \Gamma _{22}^{2}=1,\Gamma
_{22}^{1}=0.
\end{equation*}%
Invariant frame can be also written down in terms of Tresse derivatives:%
\begin{equation*}
\widehat{\lambda }=J_{4}\frac{d}{dJ_{3}},\widehat{\delta }=kJ_{0}\frac{d}{%
dJ_{0}}-kJ_{3}\frac{d}{dJ_{3}}.
\end{equation*}%
Therefore, the discussed above algebraic manifold $\Pi $ we'll get in the
following way.

Let $h$ be a binary form of degree $k$ and let 
\begin{equation*}
D_{h}:\mathbb{R}^{2}\rightarrow \mathbb{R}^{3}
\end{equation*}%
be the following rational mapping 
\begin{equation*}
D_{h}:x\mapsto \left( R_{1}=J_{3}\left( h\right) \left( x\right)
,R_{2}=h(x),U=J_{4}\left( h\right) \left( x\right) \right) ,
\end{equation*}%
where $\left( R_{1},R_{2},U\right) $ are the standard coordinates in $%
\mathbb{R}^{3}.$

Let $\Pi _{h}$ be the image of this mapping. Remark that functions $%
hJ_{3}\left( h\right) $ and $h^{2}J_{4}\left( h\right) $ are homogeneous
functions of degree zero therefore the image of surface $\Pi _{h}$ under the
rational map 
\begin{eqnarray*}
\chi &:&\mathbb{R}^{3}\rightarrow \mathbb{R}^{2}, \\
\chi &:&\left( R_{1},R_{2},U\right) \mapsto \left(
R_{1}R_{2},R_{2}^{2}U\right)
\end{eqnarray*}%
is a \textit{characteristic curve} $L_{h}\subset \mathbb{R}^{2},$ given by
equation%
\begin{equation*}
\rm{Res}\left( P_{1}-aQ_{1},P_{2}-bQ_{2}\right) =0,
\end{equation*}%
where $\left( P_{i},Q_{i}\right) $ are polynomials, say in $t=y/x,$ without
nontrivial common factor and such that 
\begin{equation*}
hJ_{3}\left( h\right) =\frac{P_{1}}{Q_{1}},\ h^{2}J_{4}\left( h\right) =%
\frac{P_{2}}{Q_{2}},
\end{equation*}%
$\left( a,b\right) $ coordinates on $\mathbb{R}^{2},$ and $\rm{Res}$ is
the resultant of polynomials.

Because $\Pi _{h}=\chi ^{-1}\left( L_{h}\right) $ we get the following
result by applying the above theorem.

\begin{theorem}
Two regular binary forms $h_{1}$ and $h_{2}$ of degree $k\geq 3$ are $G$%
-equivalent if and only if their characteristic curves coincide: $%
L_{h_{1}}=L_{h_{2}}.$
\end{theorem}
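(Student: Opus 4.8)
The plan is to derive this theorem from the general equivalence criterion proved above, namely that two regular forms are $G$-equivalent if and only if their invariant varieties $\Pi_h$ coincide. Since in the binary subsection $\Pi_h$ is the image of the map $D_h\colon x\mapsto(J_3(h),h,J_4(h))$ and $L_h=\chi(\Pi_h)$, the whole theorem will follow once I establish the single algebro-geometric identity $\Pi_h=\chi^{-1}(L_h)$. Granting it, the equivalence is formal: $\Pi_{h_1}=\Pi_{h_2}$ forces $L_{h_1}=\chi(\Pi_{h_1})=\chi(\Pi_{h_2})=L_{h_2}$, while conversely $L_{h_1}=L_{h_2}$ gives $\Pi_{h_1}=\chi^{-1}(L_{h_1})=\chi^{-1}(L_{h_2})=\Pi_{h_2}$; composing with the general criterion yields the statement.

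First I would check that for $n=2$ the full invariant data (\ref{data}) is recovered from the three functions assembled in $D_h$. This is exactly what the computations preceding the theorem provide: the frame coefficients and all Christoffel symbols $\Gamma_{ab}^{c}$ are written as rational functions of $J_3,J_4$ and $k$ (e.g.\ $\Gamma_{11}^1=J_3-J_4/(2J_3)$), and the higher order invariants $I_l$ are functions of $J_0,J_3,J_4$ as well. Hence the high-dimensional variety of the general theorem is the graph of algebraic functions over the image of $D_h$, so that the three-dimensional $\Pi_h\subset\mathbb{R}^3$ carries the same information and the general criterion applies verbatim in these coordinates.

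The core of the argument is the identity $\Pi_h=\chi^{-1}(L_h)$, which I would obtain from the homogeneity weights of the invariants under the radial scaling $x\mapsto\lambda x$ on $V$. From $\widehat{\delta}(J_0)=kJ_0$, $\widehat{\delta}(J_3)=-kJ_3$ and the commutation $[\widehat{\delta},\widehat{\lambda}]=-k\widehat{\lambda}$ one gets $\widehat{\delta}(J_4)=-2kJ_4$, so that $D_h$ intertwines the scaling on $V$ with the one-parameter group
\begin{equation*}
\lambda\cdot(R_1,R_2,U)=(\lambda^{-k}R_1,\lambda^{k}R_2,\lambda^{-2k}U)
\end{equation*}
on $\mathbb{R}^3$. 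Consequently $\Pi_h$ is invariant under this action, whose ring of invariants is generated exactly by $R_1R_2=hJ_3(h)$ and $R_2^2U=h^2J_4(h)$; thus $\chi=(R_1R_2,R_2^2U)$ is the quotient map and $L_h=\chi(\Pi_h)$ is its image. An invariant subvariety of full dimension surjecting onto $L_h$ under the quotient coincides with the saturation $\chi^{-1}(L_h)$, which is the desired identity. The implicit equation of $L_h$ as a rational plane curve is then produced by eliminating $t=x_2/x_1$ through the resultant $\mathrm{Res}(P_1-aQ_1,P_2-bQ_2)=0$, where $hJ_3(h)=P_1/Q_1$ and $h^2J_4(h)=P_2/Q_2$.

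The step I expect to be most delicate is the saturation $\Pi_h=\chi^{-1}(L_h)$ over the reals, because the image of a single line $\{\lambda x_0\}$ may cover only one sign of $R_2$ when $k$ is even, so that set-theoretically the image of $D_h$ could be only a ``half'' of a $\chi$-fibre. I would handle this exactly as the text already does, by treating $\Pi_h$ as an algebraic variety (the Zariski closure of the image): over $\mathbb{R}$ the closure of a half-line is the whole line, so the missing sign is automatically restored, and the invariance together with the dimension count then force equality of the two algebraic surfaces. Restricting throughout to the regular Zariski-dense domain where $K_2,K_3,h$ and $J_4$ are nonzero keeps $D_h$, $\chi$ and all the above rational expressions well defined, and absorbs the remaining lower-dimensional locus into the closure.
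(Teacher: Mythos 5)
Your proposal is correct and takes essentially the same route as the paper: the theorem is deduced from the general criterion that regular forms are $G$-equivalent if and only if $\Pi_{h_1}=\Pi_{h_2}$, combined with the identity $\Pi_h=\chi^{-1}\left(L_h\right)$, which is precisely the paper's one-line proof. Your extra work --- the scaling-weight computation $\widehat{\delta}(J_4)=-2kJ_4$ showing that $\chi$ is the quotient by the induced $\mathbb{R}^{*}$-action, the Zariski-closure treatment of the half-fibre issue for even $k$, and the recovery of the full invariant data $(J,U,\Gamma)$ from $(J_3,J_0,J_4)$ --- merely justifies steps the paper asserts without proof, and it checks out.
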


\section{Scalar differential operators}

\subsection{Jets of differential operators and natural invariants}

Denote by $\chi _{k}$: $\emph{Diff}_{k}(M)\rightarrow M$ \ the vector bundle
of scalar linear differential operators of the $k$-th order on manifold $M$.

Sections%
\begin{equation*}
S_{A}:M\rightarrow \emph{Diff}_{k}(M)
\end{equation*}

of this bundle will be identified with differential operators $A\in \mathbf{%
Diff}_{k}\left( M\right) $.

In this bundle we will use the following canonical local coordinates 
\begin{equation*}
\left( x_{1},..x_{n},u^{\alpha }\right) ,
\end{equation*}%
where $\left( x_{1},..x_{n}\right) $ are local coordinates on $M$ and $%
u^{\alpha }$ are fibre wise coordinates in bundle $\chi _{k}.$ Here $\alpha
=\left( \alpha _{1},...,\alpha _{n}\right) $ are multi indices of length $%
0\leq \left\vert \alpha \right\vert \leq k.$

In these coordinates the section $S_{A},$ that corresponds to operator 
\begin{equation*}
A=\sum_{\left\vert \alpha \right\vert \leq k}a^{\alpha }\left( x\right) 
\frac{\partial ^{\left\vert \alpha \right\vert }}{\partial x^{\alpha }},
\end{equation*}%
has the form%
\begin{equation*}
u^{\alpha }=a^{\alpha }\left( x\right) .
\end{equation*}

Exact sequence of modules (\ref{symbol seq}) gives us the exact sequence of
vector bundles%
\begin{equation}
0\rightarrow \chi _{k-1}\rightarrow \chi _{k}\rightarrow S^{k}\left( \tau
\right) \rightarrow 0.  \label{exact seq scalar}
\end{equation}

Denote by $\pi _{l}:J^{l}\left( \chi _{k}\right) \rightarrow M$ \ the vector
bundles of l-jets of sections of bundles $\chi _{k}$, or, in other words,
bundles of l-jets of

the k-th order scalar differential operators.

we'll denote by $[A]_{p}^{l}$ l-jets of operators at a point $p\in M$.

Bundles $\chi _{k},$ as well as bundles $\pi _{l}$ are natural in the sense
that the action of the diffeomorphism group $\mathcal{G}\left( M\right) $ is
lifted

to automorphisms of these bundles in the natural way:%
\begin{equation*}
\phi ^{\left( l\right) }:[A]_{p}^{l}\mapsto \lbrack \phi _{\ast }\left(
A\right) ]_{\phi \left( p\right) }^{l},
\end{equation*}%
for any diffeomorphism $\phi \in \mathcal{G}\left( M\right) .$

We'll study orbits (or equivalence) of scalar differential operators under
the action of the diffeomorphism group, but first of all we'll investigate
orbits of this action on the finite jet levels.

To this end we'll consider $\mathcal{G}\left( M\right) $-action on the jet
manifolds\\ $J^{l}\left( \chi _{k}\right) $, for k = 0, 1, 2, ...

Because $\mathcal{G}\left( M\right) $ acts in a transitive way on the base
manifold we fix point $p\in M$ and consider only jets of diffeomorphisms
which leave point $p$ fixed, i.e. consider the action of differential group $%
\mathbf{D}_{k+l}$ of order $\left( k+l\right) $ on the fibre $\mathbf{J}%
^{l}=J_{p}^{l}\left( \chi _{k}\right) .$

This is a linear action of the algebraic group $\mathbf{D}_{k+l}$ on vector
space $\mathbf{J}^{l}$ and due to Rosenliht theorem (see, for example, [\cite%
{Ros},\cite{KL2}]) the regular orbits of this action are separated by
rational on $\mathbf{J}^{l}$ invariants.

We call this invariants by \textit{natural differential invariants of order} 
$\leq l$, for the $k$-th order scalar linear differential operators. We also
say that an orbit $O$ is \textit{regular }if there are $m=\rm{codim}O$
rational invariants $I_{1},....,I_{m},$ which are independent in a
neighborhood of the orbit and such that 
\begin{equation*}
O=\left\{ I_{1}=c_{1},...,I_{m}=c_{m}\right\}
\end{equation*}
for some constants $c_{1},...,c_{m}$.

In other words, regularity of orbit $O$ means that the quotient space $%
\mathbf{J}^{l}/\mathbf{D}_{k+l}$ smooth at the point $O,$ and functions $%
I_{1},....,I_{m}$ could be considered as local coordinates in a neighborhood
of this point.

The universal construction, discussed above, could be applied for bundles $%
\chi _{k}$ (see, also, \cite{LY2},\cite{LY3}).

Namely, the standard reasons show that there is an unique total differential
operator of order $k:$%
\begin{equation*}
\square :C^{\infty }\left( J^{l}\left( \chi _{k}\right) \right) \rightarrow
C^{\infty }\left( J^{l+k}\left( \chi _{k}\right) \right) ,
\end{equation*}%
$l=0,1,...,$ such that 
\begin{equation}
j_{k+l}\left( S_{A}\right) ^{\ast }\left( \square \left( f\right) \right)
=A\left( j_{l}\left( S_{A}\right) ^{\ast }(f)\right) ,  \label{unidif}
\end{equation}%
for all functions $f\in C^{\infty }\left( J^{l}\left( \chi _{k}\right)
\right) $ and operators $A\in \mathbf{Diff}_{k}\left( M\right) .$

It is easy to see that in the standard jet-coordinates in the bundles $\pi
_{l}$ this operator has the following form%
\begin{equation*}
\square =\sum_{\left\vert \alpha \right\vert \leq k}u^{\alpha }\frac{%
d^{\left\vert \alpha \right\vert }}{dx^{\alpha }}.
\end{equation*}

The main property of this operator is its naturality:%
\begin{equation*}
\phi ^{\left( k+l\right) \ast }\circ \square =\square \circ \phi ^{\left(
l\right) \ast },
\end{equation*}%
for all diffeomorphisms $\phi .$

Let $J=\left( J_{1},..,J_{n}\right) $ be a set of natural differential
invariants. We say that they are in\textit{\ general position} if 
\begin{equation*}
\widehat{d}J_{1}\wedge ....\wedge \widehat{d}J_{n}\neq 0.
\end{equation*}%
Let $I$ be an invariant, then 
\begin{equation*}
\widehat{d}I=\sum_{i}I_{i}~\widehat{d}J_{i},
\end{equation*}%
for some rational functions $I_{i},$ which are called \textit{Tresse
derivatives. }

We'll denote them by $\frac{dI}{dJ_{i}}.$ Remark that they are invariants by
the construction, having, as a rule, higher order then invariant $I.$

Then the principle of n-invariants, formulated in \cite{ALV}, gives us the
following result.

\begin{theorem}
\label{n--invariants}Let's natural rational invariants $J_{1},..,J_{n}$ are
in general position. Then the field of all natural rational invariants is
generated by these invariants and the Tresse derivatives 
\begin{equation*}
\frac{d^{\left\vert \beta \right\vert }J_{\alpha }}{dJ^{\beta }}
\end{equation*}%
of invariants 
\begin{equation*}
J_{\alpha }=\square \left( J_{1}^{\alpha _{1}}\cdots J_{n}^{\alpha
_{n}}\right) ,
\end{equation*}%
with $0\leq \left\vert \alpha \right\vert \leq k.$
\end{theorem}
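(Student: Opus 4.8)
The plan is to combine the principle of $n$-invariants from \cite{ALV} with the universal property (\ref{unidif}) of the operator $\square$. First I would use that $J_{1},\dots,J_{n}$ are in general position: the condition $\widehat{d}J_{1}\wedge\cdots\wedge\widehat{d}J_{n}\neq 0$ makes $J_{1},\dots,J_{n}$ into invariant (Tresse) coordinates, so the Tresse derivatives $\frac{d}{dJ_{i}}$ are well-defined invariant derivations. The principle of $n$-invariants then reduces the theorem to exhibiting a \emph{finite generating set} of invariants: once such a set is found, the whole field is generated by it together with iterated Tresse derivatives with respect to $J_{1},\dots,J_{n}$. It therefore remains only to prove that the invariants $J_{\alpha}=\square\left( J_{1}^{\alpha_{1}}\cdots J_{n}^{\alpha_{n}}\right)$, $0\leq\left\vert \alpha\right\vert \leq k$, form such a set.

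The key step is to identify what the $J_{\alpha}$ measure. Each $J_{\alpha}$ is a natural invariant, being the image under the natural operator $\square$ of a product of invariants. To see that they generate, I would evaluate them on an operator $A$ using (\ref{unidif}): writing $J^{\alpha}$ for the monomial $J_{1}^{\alpha_{1}}\cdots J_{n}^{\alpha_{n}}$, one has $j_{k+l}\left( S_{A}\right) ^{\ast}\left( \square\left( J^{\alpha}\right) \right) =A\left( j_{l}\left( S_{A}\right) ^{\ast}\left( J^{\alpha}\right) \right)$, i.e.\ the value of $J_{\alpha}$ on $A$ equals $A$ applied to the function $J^{\alpha}(A)$. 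In the natural coordinates $z_{i}=J_{i}(A)$ the operator takes the form $A=\sum_{\left\vert \beta\right\vert \leq k}c^{\beta}\,\frac{\partial^{\left\vert \beta\right\vert }}{\partial z^{\beta}}$, where the coefficients $c^{\beta}$ are themselves natural invariants because the coordinate system is invariantly defined (this is the analogue of the coefficients in the invariant frame in the binary-form example). Applying $A$ to the monomials $z^{\alpha}$ for $\left\vert \alpha\right\vert \leq k$ produces the triangular linear system
\begin{equation*}
J_{\alpha}(A)=A\left( z^{\alpha}\right) =\alpha!\,c^{\alpha}+\sum_{\beta<\alpha}\frac{\alpha!}{\left( \alpha-\beta\right) !}\,c^{\beta}\,z^{\alpha-\beta},
\end{equation*}
whose diagonal entries $\alpha!$ are invertible over $\mathbb{Q}$; hence every coefficient $c^{\alpha}$ is a rational function of the $J_{\alpha}$ and the coordinates $J_{i}=z_{i}$.

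The last step is to close the argument. Since $A$ is completely reconstructed in natural coordinates from the invariant data $\left( J_{1},\dots,J_{n};\,c^{\alpha}\right)$, every natural differential invariant $I$, evaluated on $A$, becomes a function of the $c^{\alpha}$ expressed through the coordinates $J_{i}$; differentiating in the invariant directions produces precisely the Tresse derivatives $\frac{d^{\left\vert \beta\right\vert }J_{\alpha}}{dJ^{\beta}}$. Feeding this back into the principle of $n$-invariants yields that the field of all natural rational invariants is generated by $J_{1},\dots,J_{n}$ and these Tresse derivatives, as claimed.

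The hard part, I expect, will be making the passage from \emph{the operator is determined by the $c^{\alpha}$} to \emph{every invariant lies in the field generated by the $J_{\alpha}$ and their Tresse derivatives} fully rigorous. This requires matching the order-by-order structure of the $\mathbf{D}_{k+l}$-action with the filtration by Tresse-derivative depth, so that the principle of $n$-invariants applies with the \emph{specific} generators $J_{\alpha}$ rather than an abstract finite set; controlling denominators (that is, staying within the regular domain where the orbits are separated by rational invariants) along the way is the main technical nuisance.
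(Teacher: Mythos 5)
Your proposal is correct and follows essentially the same route as the paper's own proof: use general position to turn the values $J_{i}(A)$ into natural coordinates, recover the coefficients of $A$ in those coordinates from the values $J_{\alpha}(A)=A\left(J^{\alpha}(A)\right)$ via the universal property of $\square$, recover the derivatives of those coefficients from the Tresse derivatives, and conclude that any rational natural invariant is a rational function of these data. The only difference is that you spell out the triangular linear system with diagonal entries $\alpha!$, a computation the paper compresses into the single assertion that the $J_{\alpha}(A)$ ``allow us to find all coefficients of the operator $A$ in this coordinates.''
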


\begin{proof}
Assume that orders of all invariants $J_{i}$ less then $N-1.$ Then total
differentials $\widehat{d}J_{i}$ are linear independent in an open and dense
domain $\mathcal{O\subset }\mathbf{J}^{N}.$ Therefore, for almost all
differential operators $A\in \mathbf{Diff}_{k}\left( M\right) ,$ their
values $J_{i}\left( A\right) $ give us local coordinates in a neighborhood
of the point $p\in M.$ Then functions $J_{\alpha }\left( A\right) $ allow us
to find all coefficients of the operator $A$ in this coordinates and values $%
\frac{d^{\left\vert \beta \right\vert }J_{\alpha }}{dJ^{\beta }}\left(
A\right) $ give us all derivatives of these coefficients. Therefore, any
rational natural invariant $J$ is a rational function of invariants $J_{i}$
and $\frac{d^{\left\vert \beta \right\vert }J_{\alpha }}{dJ^{\beta }},$ by
the definition of invariants.
\end{proof}

We apply this construction for cases, where we have at least $n$ independent
invariants of order zero. To this end we consider the symbol mapping $%
\rm{smbl}:\mathbf{J}^{0}\rightarrow S^{k}T_{p}\rightarrow 0.$ This
mapping commutes with the action of the diffeomorphism group and therefore $%
\rm{smbl}^{\ast }\left( J\right) $ is a natural invariant of order zero
for differential operators if $J$ is a $\rm{GL}\left( T_{p}\right) $%
-invariant of homogeneous $k$-forms.

We'll consider cases, when $n\geq 2$ and $k\geq 3.$ Then the above
description of invariants of homogeneous $k$-forms shows that stationary Lie
algebras of regular forms are trivial and therefore codimensions of regular
orbits are 
\begin{equation*}
c\left( n,k\right) =\binom{n+k-1}{k}-n^{2}.
\end{equation*}%
It is easy to check that $c\left( n,k\right) \geq n$ for all $n\geq 2,k\geq
3,$ with the exception for the following three cases:%
\begin{eqnarray*}
n &=&2,k=3; \\
n &=&2,k=4; \\
n &=&3,k=3.
\end{eqnarray*}

\begin{theorem}
In non exceptional cases the field of natural invariants is generated by
invariants of the zero order.
\end{theorem}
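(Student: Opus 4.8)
The plan is to verify the hypotheses of the principle of $n$-invariants (Theorem \ref{n--invariants}) using invariants of order zero only. I would exhibit $n$ natural invariants $J_1,\ldots,J_n$ of order zero that are in general position; Theorem \ref{n--invariants} then expresses the whole field through the $J_i$ together with the Tresse derivatives of the invariants $J_\alpha=\square(J_1^{\alpha_1}\cdots J_n^{\alpha_n})$. Since $\square$ is a natural operator of order $k$ and Tresse differentiation is a natural operation, the $J_\alpha$ and all remaining generators are produced from the $J_i$; thus the entire field is generated by the order-zero invariants, which is the assertion.

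The order-zero invariants would come from the symbol. The symbol map $\mathrm{smbl}:\mathbf{J}^0\to S^k T_p$ is equivariant with respect to the $\mathbf{D}_k$-action and the induced $\mathrm{GL}(T_p)$-action on $k$-forms, so every rational $\mathrm{GL}(T_p)$-invariant of $k$-forms pulls back to a rational natural invariant of order zero. Since the isotropy Lie algebra of a regular $k$-form is trivial (the proposition above), a generic $\mathrm{GL}(T_p)$-orbit has dimension $n^2$ and hence codimension $c(n,k)=\binom{n+k-1}{k}-n^2$, so by Rosenlicht's theorem there are exactly $c(n,k)$ functionally independent rational $\mathrm{GL}$-invariants. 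In the non-exceptional cases $c(n,k)\geq n$, so I could choose $n$ functionally independent ones $j_1,\ldots,j_n$ and set $J_i=\mathrm{smbl}^*(j_i)$.

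The only step needing a real argument is general position, $\widehat{d}J_1\wedge\cdots\wedge\widehat{d}J_n\neq 0$. As each $J_i$ depends solely on the principal-symbol coordinates $u^\alpha$, $|\alpha|=k$, its total differential is $\widehat{d}J_i=\sum_s(\sum_{|\alpha|=k}(\partial J_i/\partial u^\alpha)\,u^\alpha_s)\,dx_s$, where $u^\alpha_s$ are the first-jet coordinates, so general position reduces to nonvanishing on $\mathbf{J}^1$ of the $n\times n$ determinant $\det(\sum_{|\alpha|=k}(\partial J_i/\partial u^\alpha)\,u^\alpha_s)_{i,s}$. Functional independence makes the matrix $(\partial J_i/\partial u^\alpha)$ of rank $n$ at a generic symbol, and because the first-jet coordinates $u^\alpha_s$ are unconstrained on $\mathbf{J}^1$ they can be chosen so that this determinant is nonzero; being a nonzero polynomial it is then nonzero on a Zariski-open dense subset, which gives the required general position. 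Theorem \ref{n--invariants} now finishes the proof. The essential obstacle is solely the existence of $n$ order-zero invariants, i.e.\ the inequality $c(n,k)\geq n$; this is precisely what fails in the three exceptional cases $(n,k)=(2,3),(2,4),(3,3)$, where $c(n,k)<n$ and invariants of positive order become indispensable.
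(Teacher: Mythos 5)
Your proposal is correct and follows essentially the same route as the paper: pull back $\mathrm{GL}(T_p)$-invariants of $k$-forms through the symbol map to get order-zero natural invariants, use triviality of the isotropy algebra of regular forms to count $c(n,k)=\binom{n+k-1}{k}-n^{2}\geq n$ independent invariants in the non-exceptional cases, and then invoke the principle of $n$-invariants (Theorem \ref{n--invariants}). Your explicit verification of the general position condition $\widehat{d}J_{1}\wedge\cdots\wedge\widehat{d}J_{n}\neq 0$ via the rank of $\bigl(\partial J_{i}/\partial u^{\alpha}\bigr)$ and the freedom of the first-jet coordinates is a detail the paper leaves implicit, but it is the intended argument.
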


\subsection{ Differential operators of general type}

We say that an operator $A\in \mathbf{Diff}_{k}\left( M\right) $ has a 
\textit{general type }at a point $q\in M$ \ if there are differential
invariants in general position such that 
\begin{equation}\label{geneposition}
dJ_{1}\left( A\right) \wedge \cdots \wedge dJ_{n}\left( A\right) \neq 0,
\end{equation}%
at the point $q.$

Then, as we have seen, functions $J_1(A) ,\cdots ,J_{n}\left(
A\right) $ are local coordinates, we call them \textit{natural coordinates}, and
functions $J_{\alpha }(A) = F_{\alpha }\left( J_{1}(A),\cdots, J_{n}(A) \right)$ allow us to find all
coefficients of the operator $A$ in terms of the local coordinates.

Therefore, the relations $J_{\alpha }=F_{\alpha }\left( J_{1},\cdots
,J_{n}\right) $ completely define differential operators locally up to
diffeomorphism.

We say also that an operator $A\in \mathbf{Diff}_{k}\left( M\right) $ has
symbol of \textit{general type }at a point $q\in M$ if the symbol of
operator $A$ is regular and there are $\rm{GL}\left( T_{q}\right) $%
-invariants $J_{1},...,J_{n}$ of the $\rm{GL}\left( T_{q}\right) $%
-action on $S^{k}T_{q},$ i.e. differential invariants of the zero order,
which are in general position.

In the last case, relations $J_{\alpha }=F_{\alpha }\left( J_{1},\cdots
,J_{n}\right) $ also completely define differential operators.

Summarizing, we get the following result.\newpage

\begin{theorem} $\phantom{\alpha}$
\begin{enumerate}
\item The relations $J_{\alpha }=F_{\alpha }\left( J_{1},\cdots
,J_{n}\right) $, $0\leq \left\vert \alpha \right\vert \leq k,$ for
differential invariants $J_{1},\cdots ,J_{n}$ in general position, locally
define general type differential operators up to local diffeomorphisms.

\item The same relations for differential operators, having symbols of
general type, where $J_{1},\cdots ,J_{n}$ are $\rm{GL}\left(
T_{q}\right) $-invariants of symbols being in general position, locally
define differential operators up to local diffeomorphisms. In particular,
these invariants of the zero order generate the field of rational natural
operators.
\end{enumerate}
\end{theorem}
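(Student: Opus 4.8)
The plan is to realize the prescribed relations $J_\alpha=F_\alpha(J_1,\dots,J_n)$ as a \emph{diffeomorphism-invariant normal form} for the operator, and to show that passing to natural coordinates reduces local equivalence to the equality of these relations. The engine throughout is the defining property \eqref{unidif} of the universal operator $\square$. First I would check that for an operator $A$ of general type at $q$ the condition \eqref{geneposition} makes the map $q\mapsto\bigl(J_1(A)(q),\dots,J_n(A)(q)\bigr)$ a local diffeomorphism, by the inverse function theorem; write $y_i=J_i(A)$ for the resulting natural coordinates.

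The key computation is to specialize \eqref{unidif} to the function $f=J_1^{\alpha_1}\cdots J_n^{\alpha_n}$. By the definition of $J_\alpha$ in Theorem \ref{n--invariants} we have $\square(f)=J_\alpha$, while $j_l(S_A)^{\ast}(J_i)=J_i(A)=y_i$ and pullback is a ring homomorphism. Hence \eqref{unidif} collapses to
\[
J_\alpha(A)=A\bigl(y_1^{\alpha_1}\cdots y_n^{\alpha_n}\bigr),\qquad 0\le|\alpha|\le k.
\]
Writing $A=\sum_{|\beta|\le k}a^{\beta}(y)\,\partial_y^{\beta}$ in natural coordinates and using $\partial_y^{\beta}(y^{\alpha})=\tfrac{\alpha!}{(\alpha-\beta)!}\,y^{\alpha-\beta}$ when $\beta\le\alpha$ componentwise and $0$ otherwise, the right-hand side expands as $\sum_{\beta\le\alpha}a^{\beta}(y)\,\tfrac{\alpha!}{(\alpha-\beta)!}\,y^{\alpha-\beta}$.

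This last identity is a triangular system in $|\beta|$, and inverting it is the technical core of the argument. From $\alpha=0$ one reads $a^{0}=A(1)$; taking $\alpha=\beta$ and isolating the $\gamma=\beta$ term gives $a^{\beta}=\tfrac{1}{\beta!}\bigl(A(y^{\beta})-\sum_{\gamma<\beta}a^{\gamma}\,\tfrac{\beta!}{(\beta-\gamma)!}\,y^{\beta-\gamma}\bigr)$, so by induction on $|\beta|$ every coefficient $a^{\beta}$ is recovered \emph{as a function} of $y$ from the data $J_\alpha(A)=F_\alpha(y)$. The main obstacle I expect is exactly this recovery: one must verify that the monomials $y^{\alpha}$, $|\alpha|\le k$, probe all and only the coefficients of order $\le|\alpha|$, so that the lower-triangular structure genuinely inverts and the $a^{\beta}$ emerge as functions rather than pointwise values. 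Given the closed form of $\partial_y^{\beta}(y^{\alpha})$ above, this is an induction rather than a deep difficulty, but it is where the content of the statement resides.

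With the coefficients pinned down, the equivalence statement of part (1) follows: two general-type operators $A,A'$ obeying the same relations $F_\alpha$ have identical coefficient functions in their respective natural charts, so the diffeomorphism identifying the two natural coordinate systems carries one operator to the other, which is local diffeomorphism equivalence. Part (2) runs verbatim, the only change being that $J_1,\dots,J_n$ are now the zeroth-order $\mathrm{GL}(T_q)$-invariants furnished by the symbol classification of Section 3; the hypothesis that the symbol is of general type guarantees these are in general position and hence serve as natural coordinates, and the same triangular recovery applies. For the final assertion I would invoke Theorem \ref{n--invariants}: the field is generated by the $J_i$ together with the Tresse derivatives of the $J_\alpha$, but the relations $J_\alpha=F_\alpha(J)$ express each such derivative through the $J_i$ alone, so the zeroth-order invariants generate the entire field of rational natural invariants.
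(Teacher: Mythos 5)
Your treatment of the two main claims is correct and is essentially the paper's own argument: both rest on the universal operator $\square$ and its defining property \eqref{unidif}, on the observation that $y_{i}=J_{i}\left( A\right) $ serve as local (natural) coordinates under \eqref{geneposition}, and on the fact that the values $J_{\alpha }\left( A\right) =A\left( y^{\alpha }\right) $ recover all coefficients of $A$ in these coordinates. Your explicit lower-triangular inversion of $A\left( y^{\alpha }\right) =\sum_{\beta \leq \alpha }a^{\beta }\frac{\alpha !}{\left( \alpha -\beta \right) !}y^{\alpha -\beta }$ is precisely the step the paper leaves implicit in the proof of Theorem \ref{n--invariants} ("functions $J_{\alpha }\left( A\right) $ allow us to find all coefficients of the operator"), and your equivalence argument (identical coefficient functions in the respective natural charts, then compose the chart maps) is the paper's as well.

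However, your justification of the final assertion of part (2) has a genuine flaw. You claim that "the relations $J_{\alpha }=F_{\alpha }\left( J\right) $ express each such derivative through the $J_{i}$ alone," and conclude that the zero-order invariants generate the field. But these relations are \emph{not} identities among invariants on the jet space $J^{l}\left( \chi _{k}\right) $: they hold only after evaluation on a fixed operator $A$, and the functions $F_{\alpha }$ depend on $A$. Indeed, if $J_{\alpha }$ were a function of $J_{1},\dots ,J_{n}$ as invariants, then every general-type operator would satisfy the same relations, and by part (1) all such operators would be locally equivalent, which is absurd; it is exactly the variation of the $F_{\alpha }$ with the operator that makes the relations a classifying datum. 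The correct meaning of "generate," and the one the paper intends, is differential-algebraic, via Theorem \ref{n--invariants}: starting from the zero-order invariants $J_{1},\dots ,J_{n}$ one canonically manufactures the higher-order invariants $J_{\alpha }=\square \left( J_{1}^{\alpha _{1}}\cdots J_{n}^{\alpha _{n}}\right) $ (these are new invariants, not functions of the $J_{i}$) and then their Tresse derivatives $\frac{d^{\left\vert \beta \right\vert }J_{\alpha }}{dJ^{\beta }}$, and it is this collection that generates the whole field. So the generation is by the $J_{i}$ together with the canonical operations $\square $ and the Tresse derivations, not by the $J_{i}$ as functions alone. Your main argument is unaffected, but this last step should be restated in that form.
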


\begin{remark}
It follows from the Rosenliht theorem \cite{Ros},\cite{KL2}, that the
field of rational natural invariants separates regular orbits in the jet
spaces of differential operators.
\end{remark}

\subsection{Symbols of general type}

Similar to differential operators, we say that a symbol $\sigma \in \Sigma
_{k}\left( M\right) $ has a\textit{\ general type} at a point $a\in M$ there
are differential invariants of symbols of $k$-th degree, say $J_{1},..,J_{n}$
such that condition (\ref{geneposition}) holds.

Functions $J_{1}\left( \sigma \right) ,\cdots ,J_{n}\left( \sigma \right) ,$
as above, are local coordinates in a neighborhood of the point $a\in M,$ and
we call them also \textit{natural. \ }

Let\textit{\ }%
\begin{equation*}
J_{\alpha }=\left( \widehat{dJ}_{1}^{\alpha _{1}}\cdot \cdots \cdot \widehat{%
dJ}_{n}^{\alpha _{n}}\right) ^{\alpha }\rfloor \nu _{k}
\end{equation*}%
be differential invariants, for multi indices $\alpha =\left( \alpha
_{1},...,\alpha _{n}\right) ,$ $\left\vert \alpha \right\vert =k.$

Then functions $J_{\alpha }\left( \sigma \right) =F_{\alpha }\left(
J_{1}\left( \sigma \right) ,\cdots ,J_{n}\left( \sigma \right) \right) $
completely defines symbol $\sigma $ in the natural coordinates and we get
the following.

\begin{theorem}
The relations $J_{\alpha }=F_{\alpha }\left( J_{1},\cdots ,J_{n}\right) $, $%
\left\vert \alpha \right\vert =k,$ for differential invariants $J_{1},\cdots
,J_{n}$ of symbols locally define general type symbols up to local
diffeomorphisms. The principle of n-invariants (\ref{n--invariants}) is also
valid for symbols.
\end{theorem}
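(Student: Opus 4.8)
The plan is to reduce everything to the observation that the functions $J_\alpha$ are nothing but the coefficients of the universal symbol written in the frame dual to the natural coordinates, and then to repeat, almost verbatim, the reconstruction argument already used for homogeneous forms and for operators of general type. To this end, recall that since each $J_i$ is a differential invariant, its total differential $\widehat{dJ}_i$ pulls back along a jet of $\sigma$ to the ordinary differential $dJ_i(\sigma)$ on $M$; extending the universal property of $\nu_k$ from the forms $df_i$ to these pulled-back exact $1$-forms (pullback commutes with the hook operator) gives, for $|\alpha|=k$,
\begin{equation*}
J_\alpha(\sigma)=\left( dJ_1(\sigma)^{\alpha_1}\cdot\cdots\cdot dJ_n(\sigma)^{\alpha_n}\right)\rfloor\sigma .
\end{equation*}
Under the general type hypothesis $dJ_1(\sigma)\wedge\cdots\wedge dJ_n(\sigma)\neq 0$, so $J_1(\sigma),\dots,J_n(\sigma)$ are local coordinates near $a$, and the displayed contraction is, up to the combinatorial factor $\alpha!$, precisely the $\alpha$-coefficient of the symmetric $k$-vector $\sigma$ in this coordinate frame. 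Hence prescribing the relations $J_\alpha=F_\alpha(J_1,\dots,J_n)$ is the same as prescribing all coefficients of $\sigma$ as fixed functions of the natural coordinates.

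For the first assertion I would argue exactly as in the $G$-equivalence theorem. If $\sigma$ and $\sigma'$ are general type symbols obeying the same relations $J_\alpha=F_\alpha(J_1,\dots,J_n)$, then in their respective natural charts both are written with the identical coefficient functions $F_\alpha$. The transition map $A$ sending the point with natural coordinates $(z_1,\dots,z_n)$ for $\sigma'$ to the point with the same natural coordinates for $\sigma$ is a local diffeomorphism; since the $J_i$ are invariants it carries the natural coordinates of $\sigma'$ to those of $\sigma$, and since the coefficient functions coincide it transforms $\sigma'$ into $\sigma$, i.e. $A^{\ast}(\sigma')=\sigma$. Conversely, any diffeomorphism preserves the invariants $J_i$ and $J_\alpha$, hence the relations, which gives necessity. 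Thus the relations determine general type symbols locally up to diffeomorphism.

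For the principle of $n$-invariants I would follow the proof of Theorem \ref{n--invariants}. Choosing $N$ larger than the orders of all $J_i$, the forms $\widehat{d}J_i$ are linearly independent on an open dense set, so for almost every $\sigma$ the values $J_i(\sigma)$ are local coordinates; the values $J_\alpha(\sigma)$ then recover all coefficients of $\sigma$ by the displayed identity, while the Tresse derivatives recover all coordinate derivatives of these coefficients, i.e. the full jet of $\sigma$. Consequently every rational differential invariant of symbols is a rational function of $J_1,\dots,J_n$ and the Tresse derivatives of the $J_\alpha$.

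The only genuinely delicate point is the bookkeeping behind the displayed identity for $J_\alpha(\sigma)$: one must verify that contracting $\nu_k$ with the symmetric product of the total differentials $\widehat{dJ}_i$ reproduces, up to the harmless factors $\alpha!$, exactly the coefficients of $\sigma$ in the natural frame, and that general position persists on a Zariski open dense set so that the natural charts are genuinely available. Both are routine once the universal property of $\nu_k$ is invoked, so no essential new difficulty arises beyond what was already handled for operators.
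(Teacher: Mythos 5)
Your proposal is correct and follows essentially the same route the paper takes: the paper's (implicit) proof consists precisely of the observation that $J_{\alpha}$, defined by contracting the universal symbol $\nu_{k}$ with symmetric products of the $\widehat{d}J_{i}$, pulls back to the coefficients of $\sigma$ in the natural coordinates $J_{1}(\sigma),\dots,J_{n}(\sigma)$, after which the reconstruction and equivalence arguments are carried over verbatim from the operator case and from Theorem~\ref{n--invariants}. Your write-up merely makes explicit the pullback bookkeeping and the two directions of the equivalence, which the paper leaves to the reader.
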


\begin{remark}
In this paper, for obvious reasons, we work only with contravariant
symmetric tensors but all results on classification of covariant symmetric
tensors, as well as their $\rm{GL}$-orbits, are also valid.
\end{remark}

\section{Differential operators, acting in line bundles}

Let $V$ be a vector space of dimension $n$ $\left( =\dim M\right) $ and let\\$%
\mathcal{\varpi \subset }S^{k}V$ be a regular $\rm{GL}\left( V\right) $%
-orbit. We say that an operator $A\in \mathbf{Diff}_{k}\left( \xi \right) ,$
acting in a linear bundle $\xi :E\left( \xi \right) \rightarrow M,$ has a 
\textit{constant type} $\mathcal{\varpi }$ if for any point $q\in M$ and any
isomorphism $\phi :T_{q}M\rightarrow V$ the image of the symbol $\phi _{\ast
}\left( \rm{smbl}_{k}\left( A\right) \right) \in S^{k}V$ belongs to $%
\mathcal{\varpi }$ .

In this case, we are not able to use $\rm{GL}$-invariants of symbols as
natural coordinates as it was done in the above theorem.

On the other hand, we will see that the description of such operators (and
their natural differential invariants) based on existence of connections on
the base manifold $M$ as well as in the linear bundle $\xi ,$ which are
naturally related to the differential operators. These connections, as we
have seen, allow us to establish isomorphism between differential operators
and their total symbols and therefore realize "the dream" of wave mechanics
on existence of natural procedure for quantization of differential operators.

Remark that for operators of the second order it was done in \cite{LY2}
and for operators of the third order over 2-dimensional manifolds in \cite%
{LY3}.

\subsection{Wagner and Chern connections}

Let $\sigma \in \Sigma _{k}\left( M\right) $ be a symbol of a \textit{%
constant type} $\mathcal{\varpi },$ and let $k\geq 3$ and $n\geq 2.$ Then,
due to (\ref{ISOTROPY}), for any point $p\in M$ there is a neighborhood $%
\mathcal{O}_{p}$ and unique linear isomorphisms $A_{\,p,p^{\prime }}:$ $%
T_{p}M\rightarrow T_{p^{\prime }}M,$ for all $p^{\prime }\in \mathcal{O}%
_{p}, $ such that $A_{p,p^{\prime }}^{\ast }\left( \sigma _{p^{\prime
}}\right) =\sigma _{p}.$ Therefore, there is and unique linear connection $%
\nabla ^{\sigma }$(we call it \textit{Wagner connection}, see \cite{Wag},%
\cite{LY3}) on manifold $M$ such that isomorphisms $A_{\,p,p^{\prime }}$ are
operators of the parallel transport along $\nabla ^{\sigma }.$ This
connection preserves the symbol, i.e.%
\begin{equation}
\nabla _{X}^{\sigma }\left( \sigma \right) =0,  \label{WagnerConnection}
\end{equation}%
for all vector fields $X$ on $M.$

On the other hand, if symbol $\sigma $ admits such connection then it has a
constant type. Summarizing we get the following.

\begin{theorem} $\phantom{a}$
%\newline
\begin{enumerate}
\item A regular symbol $\sigma \in \Sigma _{k}\left( M\right) ~$of constant
type has a unique Wagner connection (\ref{WagnerConnection}), if $k\geq
3,n\geq 2,$

\item The Wagner connection has the trivial curvature tensor, $R^{\sigma }$ $%
=0$.
\end{enumerate}
\end{theorem}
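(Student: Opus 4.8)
The plan is to reduce both statements to the two facts already established for regular forms: the isotropy Lie algebra $\mathfrak{g}_\sigma \subset \mathfrak{gl}(T_pM)$ is trivial (the Proposition), and the orbit $\varpi$ is a smooth $\mathrm{GL}(V)$-orbit of dimension $n^2$ (so its stabilizer is finite, by the Corollary following Corollary~\ref{ISOTROPY}). I would work pointwise and in a local frame of $TM$ over a neighborhood, which identifies each $T_{p'}M$ with $V$ and turns $\sigma$ into a map $p'\mapsto \sigma_{p'}\in S^kV$ taking values, by the constant-type hypothesis, in the fixed regular orbit $\varpi$.

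For part (1) I would rewrite the defining equation $\nabla^\sigma_X\sigma=0$ infinitesimally. Writing the connection form in the chosen frame as $\omega=\sum_m\omega_m\,dx_m$ with $\omega_m\in\mathfrak{gl}(V)$, the condition $\nabla_{\partial_m}\sigma=0$ becomes the linear system $\omega_m\cdot\sigma=-\partial_m\sigma$ for each $m$, where $\omega\mapsto\omega\cdot\sigma$ is the natural Lie-algebra action of $\mathfrak{gl}(V)$ on $S^kV$. The kernel of this action map is exactly $\mathfrak{g}_\sigma$, which is trivial for regular $\sigma$; hence the map is injective, its image is $n^2$-dimensional, and each system has at most one solution $\omega_m$, giving uniqueness. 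For existence I would use that this image coincides with the tangent space $T_\sigma\varpi$ (since $\dim\varpi=n^2=\dim\mathrm{Im}$ and $\mathrm{Im}\subseteq T_\sigma\varpi$ always); because $\sigma$ is valued in $\varpi$, the derivative $\partial_m\sigma$ lies in $T_\sigma\varpi=\mathrm{Im}$, so each system is solvable. The unique solutions $\omega_m(p')$ depend smoothly (rationally, by Cramer's rule) on the $1$-jet of $\sigma$, and a routine check of the transformation law $\tilde\omega_m=g\omega_m g^{-1}-(\partial_m g)g^{-1}$ under a frame change $g$ shows the equation is frame-covariant; uniqueness then guarantees that the locally defined forms glue into a single connection $\nabla^\sigma$ on $M$.

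Part (2) follows almost formally from triviality of the isotropy algebra. Since $\nabla^\sigma\sigma=0$, the curvature identity gives $R^\sigma(X,Y)\cdot\sigma=\nabla_X\nabla_Y\sigma-\nabla_Y\nabla_X\sigma-\nabla_{[X,Y]}\sigma=0$, where $R^\sigma(X,Y)\in\mathfrak{gl}(T_pM)$ acts on $\sigma$ through the same natural action on $S^k(T_pM)$. Thus $R^\sigma(X,Y)$ lies in $\mathfrak{g}_{\sigma_p}=\{0\}$ for all $X,Y$, whence $R^\sigma=0$.

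I expect the main obstacle to be the existence half of part (1): verifying cleanly that constant type forces $\partial_m\sigma\in\mathrm{Im}(\omega\mapsto\omega\cdot\sigma)$, and that the pointwise solutions assemble into a globally defined smooth connection rather than a mere field of endomorphisms. Uniqueness and flatness are then immediate once the injectivity coming from $\mathfrak{g}_\sigma=\{0\}$ is in hand. Alternatively, part (1) can be read off directly from the parallel-transport isomorphisms $A_{p,p'}$ of the preceding paragraph, setting $\nabla^\sigma$ equal to $\tfrac{d}{dt}\big|_{t=0}A_{p,p'(t)}$ along curves $p'(t)$; this also makes flatness transparent, since the holonomy around any small contractible loop preserves $\sigma_p$ and hence lies in the finite, and therefore discrete, isotropy group, forcing it to equal the identity.
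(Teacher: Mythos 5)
Your proposal is correct, but your primary argument runs at the Lie-algebra level, whereas the paper works at the group level. The paper's proof is essentially the paragraph preceding the theorem: by Corollary~\ref{ISOTROPY}, constant type together with finiteness of the isotropy group yields, for every $p'$ in a small neighborhood of $p$, a \emph{unique} isomorphism $A_{p,p'}:T_pM\rightarrow T_{p'}M$ with $A_{p,p'}^{\ast }\left( \sigma _{p'}\right) =\sigma _{p}$; these canonical identifications are then declared to be the parallel transports of $\nabla ^{\sigma }$, which gives existence and uniqueness at once, and flatness is implicit because the transport depends only on the endpoints and not on a path --- this is exactly the alternative you sketch in your closing paragraph. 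Your main argument instead uses the infinitesimal statement (the Proposition that the isotropy Lie algebra $\mathfrak{g}_{\sigma }$ is trivial for $k\geq 3$, $n\geq 2$): injectivity of $\omega \mapsto \omega \cdot \sigma $ gives uniqueness of the connection forms; the identification of the image with $T_{\sigma }\varpi $, combined with the constant-type hypothesis forcing $\partial _{m}\sigma \in T_{\sigma }\varpi $, gives solvability; and the Ricci identity $R^{\sigma }\left( X,Y\right) \cdot \sigma =0$ together with $\mathfrak{g}_{\sigma }=0$ gives flatness. What your route buys is precisely the steps the paper leaves implicit: why the pointwise data assemble into a smooth connection (Cramer's rule plus frame covariance and gluing), and an explicit proof that $R^{\sigma }=0$ rather than an appeal to path-independence. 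What the paper's route buys is conceptual economy: the connection is by definition the one whose parallel transport is the canonical identification of nearby fibres, so both assertions of the theorem are visible simultaneously, and the same picture immediately yields the converse remark that existence of a connection satisfying (\ref{WagnerConnection}) forces constant type.
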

%\bigskip

Keeping in mind differential equations, but not differential operators, we
consider conformal classes $[\sigma ]=\left\{ f\sigma ,f\in \mathcal{F}%
\left( M\right) \right\} $ of symbols.

Then the Wagner connections $\nabla ^{\sigma }$ and $\nabla ^{f\sigma }$
preserve the conformal class $[\sigma ]$ and therefore their covariant
differential satisfy the relation%
\begin{equation}
d_{\nabla ^{f\sigma }}-d_{\nabla ^{\sigma }}=\omega \otimes \rm{Id},
\label{WagDif}
\end{equation}%
for some differential 1-form $\omega .$

On the other hand relation (\ref{WagnerConnection}) shows that%
\begin{equation*}
\nabla _{X}^{f\sigma }\left( \sigma \right) -\nabla _{X}^{\sigma }\left(
\sigma \right) =\nabla _{X}^{f\sigma }\left( f^{-1}f\sigma \right) =X\left(
f^{-1}\right) f\sigma =-\frac{X\left( f\right) }{f}\sigma .
\end{equation*}

Therefore, the differential $1$-form in relation (\ref{WagDif}) equals:%
\begin{equation*}
\omega =-d\ln \left( \left\vert f\right\vert \right) .
\end{equation*}

Let $T^{\sigma }\left( X,Y\right) $ be the torsion tensor of the Wager
connection and let $\theta ^{\sigma }$ be the torsion form of this
connection,i.e.%
\begin{equation*}
\theta ^{\sigma }\left( X\right) =\rm{Tr}\left( Y\rightarrow T^{\sigma
}\left( X,Y\right) \right) .
\end{equation*}

Then, it is easy to see that,%
\begin{equation}
T^{f\sigma }\left( X,Y\right) -T^{\sigma }\left( X,Y\right) =\omega \left(
X\right) Y-\omega \left( Y\right) X  \label{TorDif}
\end{equation}%
and%
\begin{equation}
\theta ^{f\sigma }\left( X\right) -\theta ^{\sigma }\left( X\right) =\left(
n-1\right) \omega \left( X\right) .  \label{TorFormDif}
\end{equation}

Let denote by $\widetilde{\nabla }^{\sigma }$ the following deformation of
the Wagner connection:%
\begin{equation*}
\widetilde{\nabla }_{X}^{\sigma }=\nabla _{X}^{\sigma }+\lambda \ \theta
^{\sigma }\left( X\right) ,
\end{equation*}%
for some $\lambda \in \mathbb{R}.$

Then, due to (\ref{WagDif}) and (\ref{TorFormDif}), we have 
\begin{equation*}
\widetilde{\nabla }_{X}^{f\sigma }-\widetilde{\nabla }_{X}^{\sigma }=\omega
\left( X\right) +\lambda \left( n-1\right) \omega \left( X\right) =\left(
1+\lambda \left( n-1\right) \right) \omega \left( X\right) .
\end{equation*}%
Therefore, the following connection, we call it \textit{Chern connection},%
\begin{equation*}
\nabla _{X}^{[\sigma ]}=\nabla _{X}^{\sigma }-\frac{1}{n-1}\theta ^{\sigma
}\left( X\right) ,
\end{equation*}%
does not depend on representative of the conformal class $[\sigma ]$ and the
corresponding parallel transforms preserve this class.

Summarizing, we get the following result.\newpage

\begin{theorem}$\phantom{\alpha}$
\begin{enumerate}
\item For any conformal class $[\sigma ]$ of regular constant type symbol $%
\sigma $ of degree $k\geq 3,$ there is and unique Chern connection $\nabla
^{\lbrack \sigma ]},$ that preserves the conformal class%
\begin{equation*}
\nabla _{X}^{[\sigma ]}\left( \sigma \right) =-\frac{1}{n-1}\theta ^{\sigma
}\left( X\right) \sigma .
\end{equation*}

\item The torsion form of the Chern connection equals zero.
\end{enumerate}
\end{theorem}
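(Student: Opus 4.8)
The plan is to obtain the Chern connection as the unique member of a one-parameter family of canonical deformations of the Wagner connection that is insensitive to the choice of representative of the conformal class. First I would invoke the Wagner-connection theorem established above, together with (\ref{ISOTROPY}), to fix for the given representative $\sigma$ its Wagner connection $\nabla^{\sigma}$, which is uniquely determined by (\ref{WagnerConnection}) and carries a torsion form $\theta^{\sigma}$. I would then consider the affine family $\widetilde{\nabla}_X^{\sigma}=\nabla_X^{\sigma}+\lambda\,\theta^{\sigma}(X)$, $\lambda\in\mathbb{R}$, and ask for which value of $\lambda$ the resulting connection depends on $[\sigma]$ alone.

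To answer this I would track the behaviour under a conformal rescaling $\sigma\mapsto f\sigma$. The two ingredients are already recorded above: relation (\ref{WagDif}) gives $d_{\nabla^{f\sigma}}-d_{\nabla^{\sigma}}=\omega\otimes\mathrm{Id}$ with $\omega=-d\ln|f|$, and relation (\ref{TorFormDif}) gives $\theta^{f\sigma}(X)-\theta^{\sigma}(X)=(n-1)\,\omega(X)$. Substituting both into the definition of $\widetilde{\nabla}^{\sigma}$ yields $\widetilde{\nabla}_X^{f\sigma}-\widetilde{\nabla}_X^{\sigma}=\bigl(1+\lambda(n-1)\bigr)\omega(X)$. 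Since $n\geq 2$, the coefficient $1+\lambda(n-1)$ vanishes for exactly one value, $\lambda=-1/(n-1)$, and for that value alone the deformed connection is representative-independent. This simultaneously establishes existence and uniqueness of $\nabla^{[\sigma]}$ and identifies it with the connection displayed just before the theorem. The preservation formula in part (1) then follows by applying $\nabla^{[\sigma]}$ to $\sigma$ itself: the Wagner term contributes nothing by (\ref{WagnerConnection}), and the correction term $-\tfrac{1}{n-1}\theta^{\sigma}(X)$ acts on $\sigma$ by scalar multiplication, giving $\nabla_X^{[\sigma]}(\sigma)=-\tfrac{1}{n-1}\theta^{\sigma}(X)\,\sigma$, with the normalization matching the convention fixed by the display that computes $\nabla_X^{f\sigma}(\sigma)$.

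For part (2) I would compute the torsion form of $\nabla^{[\sigma]}$ from that of $\nabla^{\sigma}$. A deformation of a linear connection by a scalar one-form, $\nabla_X Y\mapsto\nabla_X Y+\beta(X)Y$, changes the torsion by the antisymmetric term $\beta(X)Y-\beta(Y)X$ --- this is precisely the pattern seen in (\ref{TorDif}) --- and hence changes the torsion form by $\theta\mapsto\theta+(n-1)\beta$, the factor $n-1$ being the trace of $Y\mapsto\beta(X)Y-\beta(Y)X$. Applying this with $\beta=-\tfrac{1}{n-1}\theta^{\sigma}$ gives $\theta^{[\sigma]}(X)=\theta^{\sigma}(X)+(n-1)\bigl(-\tfrac{1}{n-1}\bigr)\theta^{\sigma}(X)=0$, which is the asserted vanishing of the torsion form.

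The genuinely load-bearing step is the determination of $\lambda$ in part (1): it is exactly the nondegeneracy $n-1\neq 0$ (i.e. $n\geq 2$) that makes the conformal-invariance condition solvable with a unique solution, and it is the prior uniqueness of the Wagner connection, resting on the triviality of the isotropy algebra of a regular form, that upgrades this to uniqueness of the Chern connection. Part (2) is then a routine torsion computation; the only point demanding care is to keep the conventions for how the scalar correction acts on the $k$-vector $\sigma$ consistent with the normalization used earlier, so that the coefficients in both displayed formulas come out as stated.
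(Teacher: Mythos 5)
Your proposal is correct and follows essentially the same route as the paper: the paper's proof is precisely the pre-theorem computation with the family $\widetilde{\nabla}_X^{\sigma}=\nabla_X^{\sigma}+\lambda\,\theta^{\sigma}(X)$, the rescaling identities (\ref{WagDif}) and (\ref{TorFormDif}), and the unique choice $\lambda=-1/(n-1)$. You additionally spell out the part (2) trace computation $\theta\mapsto\theta+(n-1)\beta$, which the paper leaves implicit, and you rightly flag the only delicate point, namely keeping the convention for how the scalar correction acts on the $k$-vector $\sigma$ consistent so the stated coefficient $-\tfrac{1}{n-1}$ (rather than $-\tfrac{k}{n-1}$) emerges.
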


\begin{remark}
The Chern connection is a torsion free connection in dimension $n=2.$
\end{remark}

\subsection{Group-type symbols}

Let $M$ \ be a connected and simply connected manifold and let $\nabla
^{\sigma }$ be the Wagner connection with torsion tensor $T^{\sigma }.$
We'll assume that this connection is complete and its torsion tensor is
parallel%
\begin{equation}
d_{\nabla ^{\sigma }}\left( T^{\sigma }\right) =0.  \label{paralleltorsion}
\end{equation}

Then, it is easy to check that the vector space $\mathfrak{g}^{\sigma }$ of
all parallel vector fields on $M,$ having dimension $n=\dim M,$ is a Lie
algebra with respect to bracket%
\begin{equation*}
X,Y\in \mathfrak{g}^{\sigma }\rightarrow T^{\sigma }\left( X,Y\right) \in 
\mathfrak{g}^{\sigma }.
\end{equation*}

Moreover, relations (\ref{paralleltorsion}) and $R^{\sigma }=0$ implies (see 
\cite{KN}) the following:

\begin{enumerate}
\item $M$ is an analytic manifold,

\item $\sigma $ is an analytic symbol, and

\item $\nabla ^{\sigma }$ is an analytic linear connection.
\end{enumerate}

Let $G^{\sigma }$ be a connected and simply connected Lie group with Lie
algebra $\mathfrak{g}^{\sigma },$ and let $\widetilde{\sigma }\in \Sigma
_{k}\left( G^{\sigma }\right) $ be a $G^{\sigma }$-invariant symbol of the
constant type $\mathcal{\varpi }.$ The following result gives us a
classification of such type symbols and it follows from the application of
Theorem 7.8 in \cite{KN} to our case.

\begin{theorem}
Let $M$ be a connected and simply connected manifold and let $\sigma $ be a
regular symbol of degree $k\geq 3,$ having constant type $\mathcal{\varpi }.$
Let also the Wagner connection $\nabla ^{\sigma }$ be complete and the
torsion tensor $T^{\sigma }$ be parallel. Then any linear isomorphism $%
\digamma :T_{p}M\rightarrow T_{e}G^{\sigma },$ such that $\digamma \left(
\sigma _{p}\right) =\widetilde{\sigma }_{e},$ could be extended to affine
diffeomorphism $F:M\rightarrow G^{\sigma },$ such that $F_{\ast }\left(
\sigma \right) =\widetilde{\sigma }$ and $F_{\ast ,p}=\digamma .$
\end{theorem}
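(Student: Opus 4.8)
The plan is to deduce the statement from the rigidity theorem for affine connections with parallel torsion and curvature, namely Theorem 7.8 of \cite{KN}. That result asserts that if two connected, simply connected manifolds carry complete affine connections whose torsion and curvature tensors are parallel, then every linear isomorphism between tangent spaces at two chosen points that intertwines the torsion and curvature tensors there extends to a unique affine diffeomorphism inducing that isomorphism. My task is therefore to check that the two sides of the present statement satisfy these hypotheses and that $\digamma$ is \emph{admissible} in the above sense.

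First I would fix the two connections to which the theorem is applied: on $M$ the Wagner connection $\nabla^{\sigma}$, and on $G^{\sigma}$ the Wagner connection $\nabla^{\widetilde{\sigma}}$ of the invariant symbol $\widetilde{\sigma}$. On the source side all hypotheses are already in hand: $M$ is connected and simply connected and $\nabla^{\sigma}$ is complete by assumption, the curvature vanishes, $R^{\sigma}=0$, by the structure theorem for Wagner connections, and $\nabla^{\sigma}T^{\sigma}=0$ is exactly the parallel-torsion hypothesis (\ref{paralleltorsion}). For the target side I would use that $\widetilde{\sigma}$ is $G^{\sigma}$-invariant: by uniqueness of the Wagner connection for regular constant type symbols with $k\geq 3$, $n\geq 2$, the connection $\nabla^{\widetilde{\sigma}}$ is itself $G^{\sigma}$-invariant, so its parallel fields are the left-invariant fields and it is the canonical flat connection of the group. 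Hence $G^{\sigma}$ is simply connected by construction, $\nabla^{\widetilde{\sigma}}$ is complete by homogeneity, flat, and its torsion $\widetilde{T}:=T^{\widetilde{\sigma}}$ is parallel, being a left-invariant tensor, hence constant in the left-invariant parallel frame.

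The heart of the argument is to see that $\digamma$ is admissible, i.e. that it carries $T^{\sigma}_p$ to $\widetilde{T}_e$ and $R^{\sigma}_p$ to $\widetilde{R}_e$. The curvature condition is automatic since both curvatures vanish. For the torsion I would trivialise $TM$ and $TG^{\sigma}$ by parallel, respectively left-invariant, vector fields: evaluation at the base points gives isomorphisms $\mathfrak{g}^{\sigma}\cong T_pM$ and $\mathfrak{g}^{\sigma}\cong T_eG^{\sigma}$ under which the two torsions become the Lie bracket of $\mathfrak{g}^{\sigma}$ and the parallel symbols $\sigma$, $\widetilde{\sigma}$ become fixed elements of $S^{k}\mathfrak{g}^{\sigma}$. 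The condition $\digamma(\sigma_p)=\widetilde{\sigma}_e$, together with the finiteness of the isotropy of a regular $k$-form from Corollary \ref{ISOTROPY}, should then force $\digamma$ to agree with the isomorphism induced by these two identifications, which by construction intertwines the brackets and hence the torsions. This torsion-matching step is the main obstacle: it is precisely the place where the pointwise hypothesis on the symbols must be promoted to a statement about the torsion tensors, and it has to be carried out through the common Lie algebra $\mathfrak{g}^{\sigma}$ rather than by naive naturality, since $\digamma$ is given only at a single point. Once it is established, Theorem 7.8 of \cite{KN} yields a unique affine diffeomorphism $F:M\rightarrow G^{\sigma}$ with $F_{\ast,p}=\digamma$.

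It then remains to verify $F_{\ast}(\sigma)=\widetilde{\sigma}$, and here I would argue by parallelism. The Wagner connections preserve their symbols, so $\sigma$ is $\nabla^{\sigma}$-parallel and $\widetilde{\sigma}$ is $\nabla^{\widetilde{\sigma}}$-parallel; since $F$ is affine with respect to these two connections, $F_{\ast}(\sigma)$ is again $\nabla^{\widetilde{\sigma}}$-parallel. At $e=F(p)$ one has $F_{\ast}(\sigma)_e=F_{\ast,p}(\sigma_p)=\digamma(\sigma_p)=\widetilde{\sigma}_e$, so the two parallel symbols $F_{\ast}(\sigma)$ and $\widetilde{\sigma}$ coincide at one point. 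As $G^{\sigma}$ is connected, a parallel tensor is determined everywhere by its value at a single point through parallel transport, whence $F_{\ast}(\sigma)=\widetilde{\sigma}$. Together with $F_{\ast,p}=\digamma$, which is built into the application of the rigidity theorem, this completes the proof.
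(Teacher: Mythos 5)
Your route is exactly the paper's: the paper offers no proof beyond the remark that the theorem follows from Theorem 7.8 of \cite{KN}, and your proposal is that citation with its hypotheses actually checked. Most of what you add is correct and goes beyond what the paper writes: the source-side hypotheses are indeed immediate; your identification of $\nabla^{\widetilde{\sigma}}$ with the canonical flat left-invariant connection of $G^{\sigma}$ (by invariance of $\widetilde{\sigma}$ plus uniqueness of the Wagner connection), hence its completeness, flatness and parallel torsion, is right; and your closing argument that $F_{\ast}(\sigma)$ and $\widetilde{\sigma}$ are $\nabla^{\widetilde{\sigma}}$-parallel tensors agreeing at one point of the connected manifold $G^{\sigma}$, hence equal, is clean and correct.

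The gap sits precisely at the step you flag as the main obstacle, and your proposed resolution of it does not work. Finiteness of the isotropy group (Corollary \ref{ISOTROPY}) does not force $\digamma$ to be the canonical isomorphism $T_{p}M\cong\mathfrak{g}^{\sigma}\cong T_{e}G^{\sigma}$: it determines $\digamma$ only up to composition with an element $C$ of the finite stabilizer of $\sigma_{p}$ in $\mathrm{GL}\left(T_{p}M\right)$ (and the canonical isomorphism itself matches the symbols only if $\widetilde{\sigma}$ is taken to be the invariant extension of $\sigma_{p}$, which the paper nowhere requires). Such a $C$ need not preserve $T^{\sigma}_{p}$, because the torsion is first-order data of $\sigma$ and is not a function of the point value $\sigma_{p}$. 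This is fatal rather than cosmetic. Let $\mathfrak{g}$ be the nonabelian two-dimensional Lie algebra, $[e_{1},e_{2}]=e_{2}$, let $M=G^{\sigma}$ be the corresponding simply connected group, take $p=e$, and let $\sigma=\widetilde{\sigma}$ be the left-invariant symbol with $\sigma_{e}=e_{1}\cdot e_{2}\cdot\left(e_{1}+e_{2}\right)$, a regular binary cubic; here the Wagner connection is the flat left-invariant connection, complete and with parallel torsion, so all hypotheses of the theorem hold. The swap $C:e_{1}\leftrightarrow e_{2}$ fixes $\sigma_{e}$, so $\digamma=C$ satisfies $\digamma\left(\sigma_{p}\right)=\widetilde{\sigma}_{e}$; but $C$ is not an automorphism of the bracket ($C[e_{1},e_{2}]=e_{1}$ while $[Ce_{1},Ce_{2}]=-e_{2}$), hence does not intertwine the torsions, hence cannot be the differential of any affine map; indeed it cannot be the differential of any map $F$ with $F_{\ast}\left(\sigma\right)=\widetilde{\sigma}$ at all, since such an $F$ would carry Wagner connection to Wagner connection by uniqueness and therefore be affine. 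So the admissibility condition of Theorem 7.8 of \cite{KN}, that $\digamma$ map torsion to torsion, is an honest additional hypothesis and not a consequence of $\digamma\left(\sigma_{p}\right)=\widetilde{\sigma}_{e}$; the theorem as stated, quantified over all such $\digamma$, is too strong, and the paper's one-line proof glosses over exactly the same point. Your proof (and the statement) become correct if one either adds the requirement $\digamma\left(T^{\sigma}_{p}\right)=T^{\widetilde{\sigma}}_{e}$, or fixes $\widetilde{\sigma}$ to be the invariant extension of $\sigma_{p}$ and asserts the extension property only for the canonical $\digamma$.
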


\subsection{Connections, associated with differential operators of\\ constant
type}

In this section we'll consider operators $A\in \mathbf{Diff}_{k}\left( \xi
\right) ,$ acting in a line bundles $\xi :E\left( \xi \right) \rightarrow M$
and having a constant type\textit{. }We'll show\textit{\ }that, under some
generality conditions, these operators generate linear connections in the
bundle $\xi ,$ which are in a natural way associated with with operators.

As above, we'll restrict ourselves by the case $k\geq 3,n\geq 2,$ although
the case of ordinary differential operators, $n=1,$ we'll considered
separately, as an example.

Thus, let $A\in \mathbf{Diff}_{k}\left( \xi \right) $ be an operator of the
constant type and let $\sigma =\sigma _{k}=\rm{smbl}_{k}(A)\in \Sigma
_{k}\left( M\right) $ be its symbol. We'll assume that $\sigma $ is regular
and denote by $\nabla ^{\sigma }$ the Wagner connection and by $\theta
^{\sigma }$ the torsion form of this connection.

Let $\nabla $ be a linear connection in the line bundle and let $Q_{\nabla
,\nabla ^{\sigma }}$ be the quantization defined by these two connections,
and let 
\begin{equation*}
A=Q_{\nabla ,\nabla ^{\sigma }}\left( \sum_{i}\sigma _{i,\nabla }\right) ,
\end{equation*}%
be its decomposition, where $\sigma _{k,\nabla }=\sigma ,$ and 
\begin{equation*}
\sigma _{k-1,\nabla }=\rm{smbl}_{k-1}(A-Q_{\nabla ,\nabla
^{\sigma }}\left( \sigma \right) ).
\end{equation*}%
Let $\nabla ^{\prime }$ be another linear connection in the line bundle and
let 
\begin{equation*}
d_{\nabla ^{\prime }}-d_{\nabla }=\theta \otimes \rm{Id},
\end{equation*}%
for some differential form $\theta \in \Omega ^{1}\left( M\right) .$

Then, due to definition (\ref{Quant}), we have 
\begin{equation}
\rm{smbl}_{k-1}\left( Q_{\nabla ^{\prime },\nabla ^{\sigma
}}\left( \sigma \right) -Q_{\nabla ,\nabla ^{\sigma }}\left( \sigma \right)
\right) =\theta \rfloor \sigma \in \Sigma _{k-1}\left( M\right) ,
\label{QconnDiff}
\end{equation}%
and therefore%
\begin{equation}
\sigma _{k-1,\nabla ^{\prime }}-\sigma _{k-1,\nabla }=\theta \rfloor \sigma .
\label{sigmak-1}
\end{equation}

We'll say that a regular symbol $\sigma $ is \textit{Wagner regular} if
quadratic symbol 
\begin{equation*}
g_{W}=\left( \theta ^{\sigma }\right) ^{k-2}\rfloor \sigma \in \Sigma
_{2}\left( M\right)
\end{equation*}%
is non degenerated.

\begin{theorem}
Let's a differential operator $A\in \mathbf{Diff}_{k}\left( \xi \right) $
has a constant type and its symbol is the Wagner regular. Then there exists
and unique a linear connection $\nabla ^{A}$ in the line bundle $\xi $ such
that 
\begin{equation}
\left( \theta ^{\sigma }\right) ^{k-2}\rfloor \sigma _{k-1,\nabla ^{A}}=0.
\label{conncond}
\end{equation}
\end{theorem}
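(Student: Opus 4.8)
The plan is to start from an arbitrary linear connection $\nabla$ in the line bundle $\xi$ and to determine the unique $1$-form correction that produces $\nabla^A$. Fix any such $\nabla$, form the quantization $Q_{\nabla,\nabla^{\sigma}}$, and let $\sigma_{k-1,\nabla}\in\Sigma_{k-1}\left( M\right)$ be the associated subsymbol. By the exact sequence splitting, every other linear connection $\nabla^{\prime}$ in $\xi$ is obtained from $\nabla$ by $d_{\nabla^{\prime}}-d_{\nabla}=\theta\otimes\rm{Id}$ for a unique $\theta\in\Omega^{1}\left( M\right)$, and relation (\ref{sigmak-1}) tells us exactly how the subsymbol transforms, $\sigma_{k-1,\nabla^{\prime}}=\sigma_{k-1,\nabla}+\theta\rfloor\sigma$. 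Hence imposing condition (\ref{conncond}) on $\nabla^{A}=\nabla^{\prime}$ is equivalent to solving, for the unknown $\theta$, the linear equation
\[
\left( \theta^{\sigma}\right) ^{k-2}\rfloor\sigma_{k-1,\nabla}+\left( \theta^{\sigma}\right) ^{k-2}\rfloor\left( \theta\rfloor\sigma\right) =0.
\]

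The key observation is that, since $\sigma$, $\theta^{\sigma}$ and $\theta$ are all symmetric, the interior products with the covectors commute, so that the $\left( k-2\right)$-fold hook with $\theta^{\sigma}$ and the single hook with $\theta$ can be reordered:
\[
\left( \theta^{\sigma}\right) ^{k-2}\rfloor\left( \theta\rfloor\sigma\right) =\theta\rfloor\left( \left( \theta^{\sigma}\right) ^{k-2}\rfloor\sigma\right) =\theta\rfloor g_{W},
\]
where $g_{W}=\left( \theta^{\sigma}\right) ^{k-2}\rfloor\sigma\in\Sigma_{2}\left( M\right)$ is precisely the Wagner quadratic symbol. Thus the equation to be solved reduces to
\[
\theta\rfloor g_{W}=-\left( \theta^{\sigma}\right) ^{k-2}\rfloor\sigma_{k-1,\nabla},
\]
whose right-hand side is a smooth, globally defined vector field determined by the auxiliary choice of $\nabla$.

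At this point I would invoke Wagner regularity. By hypothesis $g_{W}$ is non-degenerate, so the bundle map $\theta\mapsto\theta\rfloor g_{W}$ is a fibrewise isomorphism $T^{\ast}M\rightarrow TM$. Consequently there exists one and only one $1$-form $\theta$ satisfying the last displayed equation, and it is smooth because both $g_{W}$ and the right-hand side are. Setting $d_{\nabla^{A}}=d_{\nabla}+\theta\otimes\rm{Id}$ then defines the desired connection, and the uniqueness of $\theta$ delivers both the existence and the uniqueness of a $\nabla^{A}$ fulfilling (\ref{conncond}). Independence of the auxiliary starting connection $\nabla$ is automatic: a different starting point produces a correction $1$-form differing by exactly the amount needed to land on the same $\nabla^{A}$, which is already forced by the uniqueness just established.

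The main obstacle I expect is the careful justification of the commutation identity $\left( \theta^{\sigma}\right) ^{k-2}\rfloor\left( \theta\rfloor\sigma\right) =\theta\rfloor g_{W}$, that is, tracking the symmetric-contraction bookkeeping so that reordering the hooks genuinely reassembles $\theta\rfloor g_{W}$ with no spurious combinatorial constant. Once this identity is secured, the non-degeneracy of $g_{W}$ makes the solvability and uniqueness of the equation immediate, and the theorem follows.
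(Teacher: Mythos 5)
Your proposal is correct and follows essentially the same route as the paper: reduce condition (\ref{conncond}) via the transformation rule (\ref{sigmak-1}) to the linear equation $\theta \rfloor g_{W}+\left( \theta ^{\sigma }\right) ^{k-2}\rfloor \sigma _{k-1,\nabla }=0$ for the correction form $\theta$, and solve it uniquely using the non-degeneracy of $g_{W}$. The paper's proof is just a terser version of yours, leaving the hook-commutation identity and the independence of the auxiliary connection implicit.
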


\begin{proof}
Due to (\ref{sigmak-1}), we have $\nabla ^{\prime }=\nabla ^{A}$ if and only
if 
\begin{equation*}
\theta \rfloor g_{W}+\left( \theta ^{\sigma }\right) ^{k-2}\rfloor \sigma
_{k-1,\nabla }=0,
\end{equation*}%
and the last equation has a unique solution $\theta $ if $g_{W}$ non
degenerated.
\end{proof}

\begin{remark}
Connections $\nabla ^{A}$ and the Wagner connection $\nabla ^{\rm{%
smbl}_{k}\left( A\right) }$ are natural in the sense that 
\begin{equation*}
\phi _{\ast }\left( \nabla ^{A}\right) =\nabla ^{\phi _{\ast }(A)},\phi
_{\ast }\left( \nabla ^{\rm{smbl}_{k}\left( A\right) }\right)
=\nabla ^{\phi _{\ast }(\rm{smbl}_{k}\left( A\right) )},
\end{equation*}%
for any $\phi \in \mathbf{Aut}(\xi ).$
\end{remark}

\subsection{Connections, associated with differential equations}

In order to study homogeneous differential equations, associated with
differential operators we'll study conformal classes of differential
operators $[A]=\left\{ \left. fA\right\vert \ f\in \mathcal{F}\left(
M\right) \right\} $ and geometrical structures associated with them. All
operators in this section are assumed to be regular and constant type.\ 

First of all we'll change connections and will consider Chern connections $%
\nabla ^{C}\overset{\text{def}}{=}\nabla ^{\lbrack \sigma ]}$ associated
with conformal classes $[\sigma ]$ of the symbols instead of the Wagner $%
\nabla ^{\sigma }$ ones.

As we have seen (\ref{TorFormDif}) the torsion form $\theta ^{\sigma }$ is
not invariant of the conformal class but its differential $\omega
^{C}=d\theta ^{\sigma }$ does.

In what follows we'll need some constructions from the linear algebra. To
this end we'll fix a point on $M$ and denote by $T$ the tangent space at the
point.

Let 
\begin{equation}
\widehat{\omega }:T\rightarrow T^{\ast }  \label{ChernOperator}
\end{equation}%
be the linear operator, defined by $\omega ^{C},$ i.e. 
\begin{equation*}
\left\langle \widehat{\omega }\left( X\right) ,Y\right\rangle =\omega
^{C}\left( X,Y\right) ,
\end{equation*}%
for all vectors $X,Y\in T.$

Let 
\begin{equation*}
\widehat{\omega _{l}}:S^{l}T\rightarrow S^{l}T^{\ast },
\end{equation*}%
be its $l$-th symmetric power for $l=1,2,..,$%
\begin{equation*}
\widehat{\omega _{l}}\left( X_{1}\cdot \cdots \cdot X_{l}\right) =\widehat{%
\omega }\left( X_{1}\right) \cdot \cdots \cdot \widehat{\omega }\left(
X_{l}\right) ,
\end{equation*}%
where $X_{i}\in T,$ $i=1,...,l.$

Remark that $S^{l}T^{\ast }\simeq \left( S^{l}T\right) ^{\ast }$ and
therefore operators $\widehat{\omega _{l}}$ defines bilinear forms $\omega
_{l}$ on $S^{l}T$ as follows 
\begin{equation*}
\omega _{l}\left( X_{1}\cdot \cdots \cdot X_{l},Y_{1}\cdot \cdots \cdot
Y_{l}\right) =\left\langle \widehat{\omega }\left( X_{1}\right) \cdot \cdots
\cdot \widehat{\omega }\left( X_{l}\right) ,Y_{1}\cdot \cdots \cdot
Y_{l}\right\rangle .
\end{equation*}%
Moreover, we have $\left( \widehat{\omega }\right) ^{\ast }=-\widehat{\omega 
}$ and therefore $\left( \widehat{\omega _{l}}\right) ^{\ast }=\left(
-1\right) ^{l}\widehat{\omega }_{l},$ i.e. forms $\omega _{l}$ are symmetric
or skew symmetric when $l$ is even or odd. They also are non degenerated if
the initial form $\omega ^{C}$ does.

As we also have seen (\ref{sigmak-1}) change of connection in the bundle
leads us to change of sub symbol $\sigma _{k-1}$ in the following way 
\begin{equation*}
\sigma _{k-1}\longmapsto \sigma _{k-1}+\theta \rfloor \sigma ,
\end{equation*}
and therefore defines an $T^{\ast }$-action in $S^{k-1}T.$

Denote by 
\begin{equation*}
L_{[\sigma ]}=\left\{ \left. \theta \rfloor \sigma \ \ \right\vert \theta
\in T^{\ast }\right\} \subset S^{k-1}T
\end{equation*}%
a subspace in $S^{k-1}T.$

This subspace has dimension $n=\dim T$ and depends on the conformal class of
the symbol, and orbits $O\left( \sigma _{k-1}\right) $ of tensors $\sigma
_{k-1}\in S^{k-1}T$ under the $T^{\ast }$-action are affine subspaces 
\begin{equation*}
O\left( \sigma _{k-1}\right) =\sigma _{k-1}+L_{[\sigma ]}.
\end{equation*}

Let $L_{[\sigma ]}^{0}\subset S^{k-1}T$ be the annihilator of the image $%
\widehat{\omega _{l}}\left( L_{[\sigma ]}\right) .$

\begin{proposition}
Let the following conditions hold:

\begin{enumerate}
\item 
\begin{equation}
L_{[\sigma ]}\cap \ker \widehat{\omega }_{k-1}=0.  \label{DimCond}
\end{equation}

\item 
\begin{equation}
L_{[\sigma ]}\cap L_{[\sigma ]}^{0}=0,  \label{TransverseCond}
\end{equation}
\end{enumerate}

Then for any tensor $\sigma _{k-1}\in S^{k-1}T$ there is and unique tensor $%
\sigma _{k-1}^{0}\in L_{[\sigma ]}^{0}\cap O\left( \sigma _{k-1}\right) $
and covector $\theta \in T^{\ast }$ such that 
\begin{equation}
\sigma _{k-1}=\sigma _{k-1}^{0}+\theta \rfloor \sigma .
\label{EquationConnectionForm}
\end{equation}
\end{proposition}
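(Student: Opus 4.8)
The plan is to recognize that the asserted existence-and-uniqueness is exactly the statement that $S^{k-1}T$ splits as the direct sum $L_{[\sigma ]}\oplus L_{[\sigma ]}^{0}$. Indeed, a decomposition $\sigma _{k-1}=\sigma _{k-1}^{0}+\theta \rfloor \sigma$ with $\sigma _{k-1}^{0}\in L_{[\sigma ]}^{0}$ and $\theta \rfloor \sigma \in L_{[\sigma ]}$ is precisely a splitting of $\sigma _{k-1}$ into a component in $L_{[\sigma ]}^{0}$ and a component in $L_{[\sigma ]}$; the condition $\sigma _{k-1}^{0}\in O\left( \sigma _{k-1}\right) =\sigma _{k-1}+L_{[\sigma ]}$ is then automatic, since $\sigma _{k-1}-\sigma _{k-1}^{0}=-\theta \rfloor \sigma \in L_{[\sigma ]}$. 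So I would reduce the whole proposition to showing that the two subspaces are complementary in $S^{k-1}T$.

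First I would dispose of uniqueness, which uses only (\ref{TransverseCond}). If $\sigma _{k-1}^{0}$ and $\widetilde{\sigma }_{k-1}^{0}$ both lie in $L_{[\sigma ]}^{0}\cap O\left( \sigma _{k-1}\right) $, then their difference lies in $L_{[\sigma ]}^{0}$ (a linear subspace) and simultaneously in $L_{[\sigma ]}$ (both are translates of $\sigma _{k-1}$ by vectors of $L_{[\sigma ]}$), hence in $L_{[\sigma ]}\cap L_{[\sigma ]}^{0}=0$. The covector $\theta $ is then pinned down as well, because the hook map $T^{\ast }\rightarrow S^{k-1}T$, $\theta \mapsto \theta \rfloor \sigma $, is injective; this injectivity is exactly what makes $\dim L_{[\sigma ]}=n$, as already recorded above the statement.

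For existence I would run a dimension count. By that injectivity $\dim L_{[\sigma ]}=n$. Hypothesis (\ref{DimCond}) says $\widehat{\omega }_{k-1}$ is injective on $L_{[\sigma ]}$, so the image $\widehat{\omega }_{k-1}\left( L_{[\sigma ]}\right) \subset S^{k-1}T^{\ast }$ is again $n$-dimensional; under the duality $S^{k-1}T^{\ast }\simeq \left( S^{k-1}T\right) ^{\ast }$ its annihilator $L_{[\sigma ]}^{0}\subset S^{k-1}T$ therefore has dimension $\dim S^{k-1}T-n$. Adding gives $\dim L_{[\sigma ]}+\dim L_{[\sigma ]}^{0}=\dim S^{k-1}T$, and combined with the transversality $L_{[\sigma ]}\cap L_{[\sigma ]}^{0}=0$ from (\ref{TransverseCond}) this forces $S^{k-1}T=L_{[\sigma ]}\oplus L_{[\sigma ]}^{0}$. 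Writing the resulting unique $L_{[\sigma ]}$-component as $\theta \rfloor \sigma $ then completes the argument.

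There is no analytic difficulty here, since everything takes place in the fixed finite-dimensional space $S^{k-1}T$. The only points demanding care are the bookkeeping of the duality used to define the annihilator, and deploying the two hypotheses for their distinct roles: (\ref{DimCond}) pins the dimension of $L_{[\sigma ]}^{0}$ to the complementary value (governing existence), while (\ref{TransverseCond}) supplies the trivial intersection (governing uniqueness). The mild subtlety worth double-checking is that both facts are genuinely needed — the dimension equality alone would not rule out a nontrivial intersection, and transversality alone would not guarantee that the dimensions sum correctly.
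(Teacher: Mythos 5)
Your proof is correct and takes essentially the same route as the paper's: condition \eqref{DimCond} forces $\mathrm{codim}\,L_{[\sigma ]}^{0}=n$, condition \eqref{TransverseCond} gives transversality, hence $S^{k-1}T=L_{[\sigma ]}\oplus L_{[\sigma ]}^{0}$ and each affine orbit $O\left( \sigma _{k-1}\right) $ meets $L_{[\sigma ]}^{0}$ in exactly one point. Your extra details (the explicit direct-sum reformulation, and the injectivity of $\theta \mapsto \theta \rfloor \sigma $ pinning down $\theta $ uniquely) only spell out what the paper's terse argument leaves implicit.
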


\begin{proof}
Condition (\ref{DimCond}) shows that $\dim \left( \widehat{\omega }%
_{k-1}\left( L_{[\sigma ]}\right) \right) =n$ and therefore $\rm{codim}%
\left( L_{[\sigma ]}^{0}\right) =n,$ next condition (\ref{TransverseCond})
shows that subspaces $L_{[\sigma ]}$ and $L_{[\sigma ]}^{0}$ are
transversal. Therefore any orbit $O\left( \sigma _{k-1}\right) $ has a
unique intersection with $L_{[\sigma ]}^{0}.$
\end{proof}

To reformulate transversality condition (\ref{TransverseCond}) let's take a
tensor $\gamma \in L_{[\sigma ]}\cap L_{[\sigma ]}^{0}.$ Then $\gamma \in
L_{[\sigma ]}$ implies that $\gamma =\alpha \rfloor \sigma ,$ for some
covector $\alpha \in T^{\ast },$ and $\gamma \in L_{[\sigma ]}^{0}$ means
that $\omega _{k-1}\left( \gamma ,L_{[\sigma ]}\right) =0,$ or $\omega
_{k-1}\left( \alpha \rfloor \sigma ,\beta \rfloor \sigma \right) =0,$ for
all $\beta \in T^{\ast }.$ Denote by $\omega _{\sigma }$ the following 
\textit{characteristic bivector} (symmetric, when $k$ is odd, and skew
symmetric for even $k$)%
\begin{equation}
\omega _{\sigma }\left( \alpha ,\beta \right) =\omega _{k-1}\left( \alpha
\rfloor \sigma ,\beta \rfloor \sigma \right) .  \label{CharBivector}
\end{equation}

\begin{proposition}
Subspaces $L_{[\sigma ]}$ and $L_{[\sigma ]}^{0}$ are transversal if (\ref%
{DimCond}) holds and \textit{characteristic bivector }$\omega _{\sigma }$
non degenerated.
\end{proposition}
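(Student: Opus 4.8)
The plan is to build directly on the reformulation carried out just before the statement, which already rewrites the intersection $L_{[\sigma]}\cap L_{[\sigma]}^{0}$ in terms of the characteristic bivector $\omega_{\sigma}$ from (\ref{CharBivector}). First I would observe that under (\ref{DimCond}) the dimensions of the two subspaces are complementary. Indeed, (\ref{DimCond}) says precisely that $\widehat{\omega}_{k-1}$ is injective on $L_{[\sigma]}$, so $\dim\widehat{\omega}_{k-1}(L_{[\sigma]})=\dim L_{[\sigma]}=n$, the last equality being the regularity fact recorded earlier that $\theta\mapsto\theta\rfloor\sigma$ embeds $T^{\ast}$ into $S^{k-1}T$. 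Since $L_{[\sigma]}^{0}$ is by definition the annihilator of this $n$-dimensional image, it has codimension $n$, and therefore
\[
\dim L_{[\sigma]}+\dim L_{[\sigma]}^{0}=n+\left(\dim S^{k-1}T-n\right)=\dim S^{k-1}T .
\]
Consequently the sum $L_{[\sigma]}+L_{[\sigma]}^{0}$ fills $S^{k-1}T$ exactly when $L_{[\sigma]}\cap L_{[\sigma]}^{0}=0$, so it is enough to establish the trivial intersection, which is condition (\ref{TransverseCond}).

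For the intersection I would reuse the computation already displayed: any $\gamma\in L_{[\sigma]}\cap L_{[\sigma]}^{0}$ has the form $\gamma=\alpha\rfloor\sigma$ with $\alpha\in T^{\ast}$, and the defining property of $L_{[\sigma]}^{0}$ forces $\omega_{k-1}(\alpha\rfloor\sigma,\beta\rfloor\sigma)=0$ for every $\beta\in T^{\ast}$, that is $\omega_{\sigma}(\alpha,\beta)=0$ for all $\beta$. Hence $\alpha$ lies in the radical $\ker\omega_{\sigma}=\{\alpha\in T^{\ast}\mid\omega_{\sigma}(\alpha,\cdot)=0\}$; conversely every such $\alpha$ yields a $\gamma$ in the intersection, so $L_{[\sigma]}\cap L_{[\sigma]}^{0}=\{\alpha\rfloor\sigma\mid\alpha\in\ker\omega_{\sigma}\}$. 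If $\omega_{\sigma}$ is non-degenerate then $\ker\omega_{\sigma}=0$, whence the intersection collapses to $\{0\}$; combined with the dimension count of the first step this gives $L_{[\sigma]}\oplus L_{[\sigma]}^{0}=S^{k-1}T$, i.e. transversality.

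The one step that deserves care, and that I regard as the main (though modest) obstacle, is the dimension bookkeeping, since the implication ``trivial intersection $\Rightarrow$ transversal'' rests on $\dim L_{[\sigma]}=n$ together with $\dim L_{[\sigma]}^{0}=\dim S^{k-1}T-n$. The first equality is the injectivity of the contraction $\theta\mapsto\theta\rfloor\sigma$, which I would cite from the regularity of $\sigma$ (as already asserted, $L_{[\sigma]}$ has dimension $n$), while the second follows from (\ref{DimCond}) via the injectivity of $\widehat{\omega}_{k-1}$ restricted to $L_{[\sigma]}$. Once these two counts are secured, the remainder is the purely linear-algebraic translation between the form $\omega_{k-1}$ on $S^{k-1}T$ and its pullback $\omega_{\sigma}$ along $\alpha\mapsto\alpha\rfloor\sigma$, which is routine.
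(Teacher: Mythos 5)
Your proof is correct and follows exactly the route the paper intends: the paper states this proposition without a separate proof because it is the combination of the reformulation paragraph immediately preceding it (an element of $L_{[\sigma]}\cap L_{[\sigma]}^{0}$ is $\alpha\rfloor\sigma$ with $\omega_{\sigma}(\alpha,\cdot)=0$, so non-degeneracy of $\omega_{\sigma}$ forces triviality of the intersection) with the dimension count already given in the proof of the previous proposition (condition (\ref{DimCond}) gives $\operatorname{codim}L_{[\sigma]}^{0}=n=\dim L_{[\sigma]}$). Your write-up simply makes both of these steps explicit, so it is essentially the same argument.
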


\begin{corollary}
Let $k$ be even and conditions (\ref{DimCond}, \ref{TransverseCond}) are
valid. Then $\dim M$ is even too.
\end{corollary}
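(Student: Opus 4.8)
The plan is to show that the stated hypotheses force the characteristic bivector $\omega_\sigma$ of (\ref{CharBivector}) to be a non-degenerate \emph{skew-symmetric} form on $T^{\ast}$, and then invoke the classical fact that a non-degenerate skew form can exist only on an even-dimensional space. First I would pin down the symmetry type. Since $k$ is even, $k-1$ is odd, so by the sign rule $(\widehat{\omega_{l}})^{\ast}=(-1)^{l}\widehat{\omega}_{l}$ established above, the form $\omega_{k-1}$ on $S^{k-1}T$ is skew-symmetric. Consequently $\omega_\sigma(\alpha,\beta)=\omega_{k-1}(\alpha\rfloor\sigma,\beta\rfloor\sigma)$ is skew-symmetric in $\alpha,\beta\in T^{\ast}$, exactly as recorded in the definition of the characteristic bivector.

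Next I would upgrade transversality to non-degeneracy of $\omega_\sigma$. The hook map $T^{\ast}\to S^{k-1}T$, $\alpha\mapsto\alpha\rfloor\sigma$, is injective, because its image $L_{[\sigma]}$ already has dimension $n=\dim T$. Suppose $\alpha$ lies in the radical of $\omega_\sigma$, i.e.\ $\omega_{k-1}(\alpha\rfloor\sigma,\beta\rfloor\sigma)=0$ for every $\beta\in T^{\ast}$. Setting $\gamma=\alpha\rfloor\sigma\in L_{[\sigma]}$, this says precisely that $\omega_{k-1}(\gamma,L_{[\sigma]})=0$, which is the membership condition $\gamma\in L_{[\sigma]}^{0}$ used in the reformulation of (\ref{TransverseCond}) preceding the last two propositions. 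Hence $\gamma\in L_{[\sigma]}\cap L_{[\sigma]}^{0}=0$ by (\ref{TransverseCond}), and injectivity of the hook map forces $\alpha=0$. Thus the radical of $\omega_\sigma$ is trivial and $\omega_\sigma$ is non-degenerate. Condition (\ref{DimCond}) enters only to keep the reformulation well posed, guaranteeing $\dim\widehat{\omega}_{k-1}(L_{[\sigma]})=n$ so that $L_{[\sigma]}^{0}$ really has codimension $n$.

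Finally, a non-degenerate skew-symmetric bilinear form on the $n$-dimensional space $T^{\ast}$ exists only when $n$ is even, so $\dim M=n$ is even. The one point I would double-check — and the only place needing care — is the identification $\gamma\in L_{[\sigma]}^{0}$: the annihilator is defined through the pairing $\langle\widehat{\omega}_{k-1}(w),\gamma\rangle$ with $w\in L_{[\sigma]}$, whereas the radical condition naturally produces $\langle\widehat{\omega}_{k-1}(\gamma),w\rangle$. These two pairings coincide up to the sign $(-1)^{k-1}$ coming from the (skew-)symmetry of $\omega_{k-1}$, so they vanish simultaneously and the membership is unaffected. Since this is the only subtlety, the argument is short once the symmetry type is correctly determined.
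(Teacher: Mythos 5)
Your proof is correct and follows essentially the route the paper intends: the corollary sits right after the definition of the characteristic bivector $\omega_\sigma$ (skew-symmetric for even $k$) and the transversality proposition, and the intended argument is exactly that conditions (\ref{DimCond}), (\ref{TransverseCond}) force $\omega_\sigma$ to be a non-degenerate skew form on $T^{\ast}$, which requires $\dim M$ even. Your only addition is to spell out the direction (\ref{TransverseCond}) $\Rightarrow$ non-degeneracy (using injectivity of $\alpha\mapsto\alpha\rfloor\sigma$ and the sign-insensitivity of the annihilator pairing), which the paper leaves implicit since its proposition states only the converse implication.
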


\begin{definition}
We say that a regular and constant type operator $A\in \mathbf{Diff}%
_{k}\left( \xi \right) $ is Chern regular if conditions (\ref{DimCond}, \ref%
{TransverseCond}) hold.
\end{definition}

\begin{theorem}
Let $A\in \mathbf{Diff}_{k}\left( \xi \right) $ be Chern regular operator.
Then there is and unique linear connection $\nabla ^{\lbrack A]}$ in the
line bundle $\xi ,$ depending on conformal class $[A],$ and such that the
subsymbol $\rm{smbl}_{k-1}\left( A\right) ,$ defining by this
connection, belongs to $L_{[\rm{smbl}_{k}\left( A\right) ]}^{0}.$
\end{theorem}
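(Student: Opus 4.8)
The plan is to mirror the proof of the Wagner regular statement around (\ref{conncond}), replacing the single linear condition used there by the geometric requirement that the subsymbol land in $L_{[\sigma]}^{0}$, and then to add one extra step checking conformal invariance. First I would fix an auxiliary linear connection $\nabla$ in $\xi$ and form the quantization $Q_{\nabla,\nabla^{C}}$ from it together with the Chern connection $\nabla^{C}=\nabla^{[\sigma]}$ on $M$, noting that $\nabla^{C}$ depends only on the conformal class $[\sigma]$. This produces a subsymbol $\sigma_{k-1,\nabla}=\rm{smbl}_{k-1}(A-Q_{\nabla,\nabla^{C}}(\sigma))\in S^{k-1}T$. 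By (\ref{sigmak-1}), replacing $\nabla$ by $\nabla'$ with $d_{\nabla'}-d_{\nabla}=\theta\otimes\rm{Id}$ changes the subsymbol to $\sigma_{k-1,\nabla}+\theta\rfloor\sigma$, so the sought connection is exactly one for which $\sigma_{k-1,\nabla}+\theta\rfloor\sigma\in L_{[\sigma]}^{0}$.

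Existence and uniqueness then reduce to the decomposition (\ref{EquationConnectionForm}). Since $A$ is Chern regular, conditions (\ref{DimCond}) and (\ref{TransverseCond}) hold fibrewise, so the Proposition yielding (\ref{EquationConnectionForm}) gives at each point a unique $\sigma_{k-1}^{0}\in L_{[\sigma]}^{0}$ and a unique covector $\eta\in T^{\ast}$ with $\sigma_{k-1,\nabla}=\sigma_{k-1}^{0}+\eta\rfloor\sigma$; as this decomposition is purely linear-algebraic in the pointwise data, $\eta$ assembles into a smooth $1$-form over the regular domain. Setting $\theta=-\eta$ and $d_{\nabla^{[A]}}=d_{\nabla}-\eta\otimes\rm{Id}$ yields $\sigma_{k-1,\nabla^{[A]}}=\sigma_{k-1}^{0}\in L_{[\sigma]}^{0}$, which is existence. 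For uniqueness, if two admissible connections differ by $\theta_{1}-\theta_{2}$, their subsymbols differ by $(\theta_{1}-\theta_{2})\rfloor\sigma\in L_{[\sigma]}$ while both lie in $L_{[\sigma]}^{0}$, so the difference lies in $L_{[\sigma]}\cap L_{[\sigma]}^{0}=0$ by (\ref{TransverseCond}); because $L_{[\sigma]}$ has dimension $n=\dim T$, the contraction $\theta\mapsto\theta\rfloor\sigma$ from $T^{\ast}$ onto $L_{[\sigma]}$ is an isomorphism, hence $\theta_{1}=\theta_{2}$.

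The step I expect to be the real content is the conformal invariance, i.e. that $\nabla^{[fA]}=\nabla^{[A]}$ for every $f\in\mathcal{F}(M)$, which is what makes $\nabla^{[A]}$ an invariant of $[A]$. Here I would first observe that $\rm{smbl}_{k}(fA)=f\sigma$ lies in the same conformal class, so $\nabla^{C}$, the form $\omega^{C}=d\theta^{\sigma}$, and the subspaces $L_{[\sigma]}$, $L_{[\sigma]}^{0}$ are all unchanged. From definition (\ref{Quant}) one gets $Q_{\nabla,\nabla^{C}}(f\sigma)=f\,Q_{\nabla,\nabla^{C}}(\sigma)$, whence $fA-Q_{\nabla,\nabla^{C}}(f\sigma)=f\,(A-Q_{\nabla,\nabla^{C}}(\sigma))$ and $\sigma_{k-1,\nabla}(fA)=f\,\sigma_{k-1,\nabla}(A)$. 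Multiplying the decomposition $\sigma_{k-1,\nabla}(A)=\sigma_{k-1}^{0}+\eta\rfloor\sigma$ by the nonvanishing scalar $f$, and using that $L_{[\sigma]}^{0}$ is a linear subspace together with $f(\eta\rfloor\sigma)=\eta\rfloor(f\sigma)$, the uniqueness in the Proposition applied to the symbol $f\sigma$ identifies the correction covector for $fA$ with the same $\eta$. Therefore $d_{\nabla^{[fA]}}=d_{\nabla}-\eta\otimes\rm{Id}=d_{\nabla^{[A]}}$, so the connection depends only on $[A]$. The only genuine care needed is the passage from the pointwise Proposition to a globally smooth $1$-form $\eta$, which is guaranteed because the regular domain is Zariski open and dense and the decomposition depends rationally on the jet of $A$.
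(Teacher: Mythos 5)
Your proposal is correct and follows essentially the same route as the paper's own proof: fix an auxiliary connection in $\xi$, reduce existence and uniqueness of the correcting covector to the decomposition (\ref{EquationConnectionForm}) guaranteed by Chern regularity via (\ref{sigmak-1}), and obtain conformal invariance from the fact that both $\mathrm{smbl}_{k-1}(A)$ and $\mathrm{smbl}_{k}(A)$ scale by $f$ while the covector stays the same. The details you add beyond the paper (sign bookkeeping, the $C^{\infty}(M)$-linearity of the quantization giving $\sigma_{k-1}(fA)=f\sigma_{k-1}(A)$, and smoothness of the resulting $1$-form over the regular domain) are exactly the points the paper leaves implicit, and they do not alter the argument.
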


\begin{proof}
Let $\nabla $ be a connection in the line bundle and let $\sigma _{k-1}$ be
subsymbol of operator $A.$ Then $d_{\nabla ^{\lbrack A]}}-d_{\nabla }=\theta
\otimes \rm{Id},$ and sybsymbol of $A,$ defining by $\nabla ^{\lbrack
A]},$ belong to $L_{[\rm{smbl}_{k}\left( A\right) ]}^{0}$ if and only
if 
\begin{equation*}
\sigma _{k-1}-\theta \rfloor \sigma \in L_{[\rm{smbl}_{k}\left(
A\right) ]}^{0}.
\end{equation*}%
As we have seen this condition uniquely defines covector $\theta $ if
operator $A$ is Chern regular.

If we take another operator $fA$ from the conformal class then $\sigma
_{k-1} $ and $\rm{smbl}_{k}\left( A\right) $ are multiplied by $f$ \
but $\theta $ will not be changed.
\end{proof}

\begin{remark}
Connections $\nabla ^{\lbrack A]}$ and the Chern connection $\nabla
^{\lbrack \rm{smbl}_{k}\left( A\right) ]}$ are natural in the
sense that 
\begin{equation*}
\phi _{\ast }\left( \nabla ^{\lbrack A]}\right) =\nabla ^{\lbrack \phi
_{\ast }(A)]},\phi _{\ast }\left( \nabla ^{\lbrack \rm{smbl}%
_{k}\left( A\right) ]}\right) =\nabla ^{\lbrack \phi _{\ast }(%
\rm{smbl}_{k}\left( A\right) )]},
\end{equation*}%
for all automorphisms $\phi \in \mathbf{Aut}(\xi ).$
\end{remark}

\subsection{Example: Ordinary differential operators}

The case of ordinary differential operators in many aspect exceptional from
the point of view represented in this paper.

In this section we'll discus it in more details.

Let 
\begin{equation*}
A=a_{k}\partial ^{k}+\cdots +a_{1}\partial +a_{0},
\end{equation*}%
be a scalar ordinary differential operator, where $a_{i}=a_{i}\left(
x\right) ,$ $\partial =d/dx,$ $M=\mathbb{R}.$

This operator has the constant type if and only if $a_{k}\neq 0.$

Assume that a function $\Gamma \left( x\right) $ is a Christoffel
coefficient of a linear connection $\nabla $ on $M:$%
\begin{equation*}
\nabla _{\partial }\left( \partial \right) =\Gamma \partial .
\end{equation*}

Then, the quantization $Q:\Sigma _{k}\rightarrow \mathbf{Diff}_{k}\left(
M\right) ,$ associated with this connection, acts in the following way:%
\begin{equation*}
Q\left( \partial ^{k}\right) \left( f\right) =w^{-k}\left( w\partial
_{x}-\Gamma w^{2}\partial _{w}\right) ^{k}\left( f\right) ,
\end{equation*}%
for any function $f=f(x).$

In particular,%
\begin{eqnarray*}
Q\left( 1\right) &=&1, \\
Q\left( \partial \right) &=&\partial , \\
Q\left( \partial ^{2}\right) &=&\partial ^{2}-\Gamma \partial , \\
Q\left( \partial ^{3}\right) &=&\partial ^{3}-3\Gamma \partial ^{2}+\left(
\Gamma ^{2}-\Gamma ^{\prime }\right) \partial \text{.}
\end{eqnarray*}

This connection is the Wagner connection if $\nabla _{\partial }\left(
a_{k}\partial ^{k}\right) =0,$ or 
\begin{equation*}
\Gamma =-\frac{a_{k}^{\prime }}{ka_{k}}
\end{equation*}

If operator $A$ has order 2 and $\nabla $ is the Wagner connection we get 
\begin{equation*}
A=Q\left( \sigma _{2}+\sigma _{1}+\sigma _{0}\right) ,
\end{equation*}%
where%
\begin{eqnarray}
\sigma _{2} &=&a_{2}\partial ^{2},  \label{sigmadiffeo} \\
\sigma _{1} &=&\left( a_{1}-\frac{a_{2}^{\prime }}{2}\right) \partial , 
\notag \\
\sigma _{0} &=&a_{0}.  \notag
\end{eqnarray}

For operators of the third order the corresponding Wagner connections are of
the form%
\begin{equation*}
\Gamma =-\frac{a_{3}^{\prime }}{3a_{3}},
\end{equation*}%
and 
\begin{equation*}
A=Q\left( \sigma _{3}+\sigma _{2}+\sigma _{1}+\sigma _{0}\right) ,
\end{equation*}%
where%
\begin{eqnarray}
\sigma _{3} &=&a_{3}\partial ^{3},  \label{sigmadiffeo3} \\
\sigma _{2} &=&\left( a_{2}-a_{3}^{\prime }\right) \partial ^{2},  \notag \\
\sigma _{1} &=&\left( \frac{4a_{3}^{\prime 2}}{9a_{3}}-\frac{%
a_{2}a_{3}^{\prime }}{3a_{3}}-\frac{a_{3}^{\prime \prime }}{3}\right)
\partial ,\   \notag \\
\sigma _{0} &=&a_{0}.  \notag
\end{eqnarray}%
In the case, when $A\in \mathbf{Diff}_{k}\left( \xi \right) $ and $\xi $ is
a line bundle over $M=\mathbb{R}$ with a linear connection $\nabla ^{\xi }$
defined by a differential 1-form $\theta \left( x\right) dx,$ the
quantization $Q:\Sigma _{k}\left( M\right) \rightarrow \mathbf{Diff}%
_{k}\left( \xi \right) ,$ associated with the connection $\nabla $ on $M$
and the connection $\nabla ^{\xi },$ has the form 
\begin{equation*}
Q\left( \partial ^{k}\right) \left( f\right) =w^{-k}\left( w\partial
_{x}-\Gamma w^{2}\partial _{w}+w\theta \right) ^{k}\left( f\right) ,
\end{equation*}%
and for low orders has the form%
\begin{eqnarray*}
Q\left( 1\right) &=&1,\  \\
Q\left( \partial \right) &=&\partial +\theta ,\  \\
Q\left( \partial ^{2}\right) &=&\partial ^{2}+(2\theta -\Gamma )\partial
+\theta ^{\prime }-\theta \Gamma +\theta ^{2}, \\
Q\left( \partial ^{3}\right) &=&\partial ^{3}+3\left( \theta -\Gamma \right)
\partial ^{2}+\left( 2\Gamma ^{2}-6\theta \Gamma +3\theta ^{2}-\Gamma
^{\prime }+3\theta ^{\prime }\right) \partial + \\
&&\theta ^{\prime \prime }+3\left( \theta -\Gamma \right) \theta ^{\prime
}+\left( \left( \Gamma -\theta \right) \left( 2\Gamma -\theta \right)
-\Gamma ^{\prime }\right) \theta .
\end{eqnarray*}%
In the case when $\nabla $ is the Wagner connection for given operator $A$
we define the associated connection $\nabla ^{A}$ by a requirement slightly
different from the above. Namely, we'll require that%
\begin{equation}
\left( \theta dx\right) \rfloor \sigma _{k}=\sigma _{k-1}.
\label{A-ordconnection}
\end{equation}%
Thus, for the second order operators,%
\begin{equation*}
A=a_{2}\partial ^{2}+a_{1}\partial +a_{0},
\end{equation*}

we have 
\begin{eqnarray*}
\sigma _{2} &=&a_{2}\partial ^{2}, \\
\sigma _{1} &=&\left( a_{1}+a_{2}\left( \Gamma -2\theta \right) \right)
\partial .
\end{eqnarray*}%
Therefore, in this case we get%
\begin{equation*}
\Gamma =-\frac{a_{2}^{\prime }}{2a_{2}},\theta =\frac{2a_{1}-a_{2}^{\prime }%
}{8a_{2}},
\end{equation*}%
and the invariant quantization $Q_{\nabla _{W},\nabla ^{A}}$ has the form%
\begin{eqnarray*}
Q\left( 1\right) &=&1, \\
Q\left( \partial \right) &=&\partial +\frac{2a_{1}-a_{2}^{\prime }}{8a_{2}},
\\
Q\left( \partial ^{2}\right) &=&\partial ^{2}+\frac{2a_{1}+a_{2}^{\prime }}{%
4a_{2}}\partial -\frac{a_{2}^{\prime \prime }+2a_{1}^{\prime }}{8a_{2}}+%
\frac{5}{64}\left( \frac{a_{2}^{\prime }}{a_{2}}\right) ^{2}+\frac{%
a_{1}^{2}-3a_{1}a_{2}^{\prime }}{16a_{2}^{2}},
\end{eqnarray*}%
and%
\begin{eqnarray}
\sigma _{2} &=&a_{2}\partial ^{2},  \label{sigmaauto2} \\
\sigma _{1} &=&\left( \frac{a_{1}}{2}-\frac{a_{2}^{\prime }}{4}\right)
\partial ,  \notag \\
\sigma _{0} &=&a_{0}+\frac{a_{2}a_{2}^{\prime \prime }+a_{1}a_{2}^{\prime }}{%
8a_{2}}-\frac{a_{1}^{2}+a_{2}a_{1}^{\prime }}{4a_{2}}-\frac{7}{64}\frac{%
a_{2}^{\prime 2}}{a_{2}}.  \notag
\end{eqnarray}

Remark that tensors (\ref{sigmadiffeo} and \ref{sigmadiffeo3}) are
invariants of scalar differential operators with respect to the
diffeomorphism group and tensors (\ref{sigmaauto2}) are invariants of the
group of automorphisms.

In general case we get the following result (cf. \cite{Wil}).

\begin{theorem} $\phantom{\alpha}$
\begin{enumerate}
\item Let $A\in \mathbf{Diff}_{k}\left( \mathbb{R}\right) $ be an ordinary
differential operator of constant type and let $\nabla ^{W}$ be the
associated Wagner connection%
\begin{equation*}
\nabla _{\partial }^{W}\left( \rm{smbl}A\right) =0,
\end{equation*}%
and $Q^{w}:\Sigma _{k}\left( \mathbb{R}\right) \rightarrow \mathbf{Diff}%
_{k}\left( \mathbb{R}\right) $ be the quantization, defined by $\nabla ^{W}.$
Then the total symbol%
\begin{eqnarray*}
\sigma _{\cdot } &=&\sigma _{k}+\sigma _{k-1}+\cdots +\sigma _{0}, \\
\sigma _{i} &\in &\Sigma _{i}\left( \mathbb{R}\right) ,
\end{eqnarray*}%
defined by the condition 
\begin{equation*}
Q^{W}\left( \sigma _{\cdot }\right) =A,
\end{equation*}%
is a tensor invariant of scalar ordinary differential operators with respect
to diffeomorphism group $\mathcal{G}\left( \mathbb{R}\right) $.

Moreover, functions $\sigma _{0}$ and $\lambda _{i},$ $\sigma _{i}=\lambda
_{i}\sigma _{1}^{i}$ are scalar differential invariants$.$

\item Let $A\in \mathbf{Diff}_{k}\left( \xi \right) $ be a linear
differential operator, acting in a line bundle $\xi $ over $\mathbb{R}$ and
having the constant type. Let $\nabla ^{W}$ be the associated Wagner
connection, and let $\nabla ^{A}$ be the linear connection in the line
bundle $\xi $ defined by (\ref{A-ordconnection}) and $Q^{w,A}:\Sigma
_{k}\left( \mathbb{R}\right) \rightarrow \mathbf{Diff}_{k}\left( \xi \right) 
$ be the quantization associated with these connections.

Then the total symbol $\sigma _{\cdot },\ Q^{W,A}\left( \sigma _{\cdot
}\right) =A,$ is an invariant tensor of the ordinary differential operators
with respect to automorphism group $\mathbf{Aut}(\xi )$ and the defined
above functions $\sigma _{0}$ and $\lambda _{i},i=2,...k$ are scalar
differential invariants.
\end{enumerate}
\end{theorem}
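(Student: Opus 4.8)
The plan is to reduce both parts to a single structural principle: each ingredient entering the definition of $\sigma_{\cdot}$ — the Wagner connection $\nabla^{W}$, the associated line-bundle connection $\nabla^{A}$, and the quantization $Q$ — is equivariant for the relevant group, while the decomposition $A=Q(\sigma_{\cdot})$ is unique. Granting this, tensor invariance of $\sigma_{\cdot}$ drops out immediately, and the scalar invariance of $\sigma_{0}$ and of the coefficients $\lambda_{i}$ follows from a rank count special to $n=1$. I would run parts (1) and (2) in parallel, the only difference being that the acting group is $\mathcal{G}\left( \mathbb{R}\right) $ (with $\xi=\mathbf{1}$, $\widetilde{\phi}=\phi$) in the first and $\mathbf{Aut}(\xi)$ in the second.

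First I would record the equivariance of the two connections. The Wagner connection is characterized by $\nabla^{W}_{\partial}(\mathrm{smbl}_{k}A)=0$, i.e.\ $\Gamma=-a_{k}'/(k a_{k})$, and is thus uniquely attached to $A$; likewise (\ref{A-ordconnection}) determines $\nabla^{A}$ uniquely once $\nabla^{W}$ is fixed. Pushing these defining relations forward by any $\widetilde{\phi}\in\mathbf{Aut}(\xi)$ covering a diffeomorphism $\phi$ and invoking uniqueness shows that the connections attached to $\widetilde{\phi}_{\ast}A$ are exactly the pushforwards of those attached to $A$. This is precisely the naturality already recorded in the remark for $\nabla^{A}$ and the Wagner connection, here specialized to $M=\mathbb{R}$.

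Next I would verify that the quantization intertwines the action. Since $Q$ is built in (\ref{Quant}) from the covariant differentials $d^{s}_{\nabla}$ and $d^{s}_{\nabla^{\xi}}$, and connections push forward to connections while the pairing $\langle\cdot,\cdot\rangle$ is natural, one obtains on symbols
\begin{equation*}
\widetilde{\phi}_{\ast}\circ Q_{\nabla^{W},\nabla^{A}}=Q_{\phi_{\ast}(\nabla^{W}),\,\widetilde{\phi}_{\ast}(\nabla^{A})}\circ\phi_{\ast}.
\end{equation*}
By the previous step the two connections on the right are those attached to $\widetilde{\phi}_{\ast}A$. Applying $\widetilde{\phi}_{\ast}$ to the identity $A=Q_{\nabla^{W},\nabla^{A}}(\sigma_{\cdot}(A))$ and comparing with the unique total-symbol decomposition of $\widetilde{\phi}_{\ast}A$ then forces
\begin{equation*}
\sigma_{\cdot}(\widetilde{\phi}_{\ast}A)=\phi_{\ast}(\sigma_{\cdot}(A)),
\end{equation*}
which is the asserted tensor invariance; part (1) is the special case $\xi=\mathbf{1}$, $\widetilde{\phi}=\phi$.

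Finally, for the scalar invariants I would use that on $M=\mathbb{R}$ the module $\Sigma_{i}\left( \mathbb{R}\right) $ is free of rank one, generated by $(d/dx)^{i}$, so $\sigma_{1}^{i}$ trivializes it wherever $\sigma_{1}\neq 0$. Thus $\lambda_{i}=\sigma_{i}/\sigma_{1}^{i}$ is a well-defined function, and as a ratio of two invariant tensors it satisfies $\lambda_{i}(\widetilde{\phi}_{\ast}A)=\phi_{\ast}(\lambda_{i}(A))$, i.e.\ it is a scalar differential invariant; likewise $\sigma_{0}\in C^{\infty}\left( M\right) $ is a function whose tensor invariance is exactly its scalar invariance. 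The only genuinely substantive point is the equivariance of $Q$; the rest is either the explicit uniqueness of $\nabla^{W}$ and $\nabla^{A}$ or the rank-one triviality special to $n=1$. I expect the main obstacle to lie in the simultaneous bookkeeping of the two connections under $\mathbf{Aut}(\xi)$ in part (2), where the pushforward of $\nabla^{A}$ is governed by the covering automorphism $\widetilde{\phi}$ rather than by $\phi$ alone.
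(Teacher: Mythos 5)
Your proposal is correct and takes essentially the same route as the paper: the paper states this theorem without a separate proof, its intended justification being precisely the naturality (via uniqueness) of the Wagner connection and of $\nabla ^{A}$ recorded in the earlier remarks, the equivariance of the quantization $Q$ built from covariant differentials, and the uniqueness of the total-symbol decomposition — the same three ingredients you assemble, with your rank-one argument for $\sigma _{0}$ and $\lambda _{i}$ being the observation the paper makes implicitly through its explicit formulas (\ref{sigmadiffeo}), (\ref{sigmadiffeo3}) and (\ref{sigmaauto2}). Your explicit handling of the two-connection bookkeeping under $\mathbf{Aut}(\xi )$ in part (2) simply spells out what the paper leaves implicit.
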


\begin{remark}
It is worth to note that these differential invariants are not Wilczynski
invariants \cite{Wil}: they are invariants of operators but not invariants
of equations.
\end{remark}

\subsection{Differential invariants of constant type symbols}

Let $\pi :S^{k}T\left( M\right) \rightarrow M$ be the bundle of symmetric $k$%
-vectors (symbols) and let $\nu _{k}\in \Sigma _{k}\left( \pi \right) $ be
the universal symbol (of order 0). We denote by $\mathcal{O}_{0}\subset
J^{0}\left( \pi \right) $ the domain of regular symbols. The symbols having
the constant type $\mathcal{\varpi }$ constitute a subbundle 
\begin{equation*}
\pi ^{\mathcal{\varpi }}:\mathcal{E}_{\mathcal{\varpi }}\rightarrow M
\end{equation*}%
of the bundle $\pi :\mathcal{O}_{0}\rightarrow M$ of regular symbols.

Then the Wagner connection defines a total covariant differential 
\begin{equation*}
\widehat{d}_{\mathcal{\varpi }}:\Sigma _{1}\left( \pi ^{\mathcal{\varpi }%
}\right) \rightarrow \Sigma _{1}\left( \pi ^{\mathcal{\varpi }}\right)
\otimes \Omega ^{1}\left( \pi ^{\mathcal{\varpi }}\right) ,
\end{equation*}%
over the domain of regular symbols, and, by the construction 
\begin{equation*}
\widehat{d}_{\mathcal{\varpi }}\left( \nu _{k}\right) =0.
\end{equation*}

Let $T^{\mathcal{\varpi }}\in \Omega ^{2}\left( \pi ^{\mathcal{\varpi }%
}\right) \otimes \Sigma _{1}\left( \pi ^{\mathcal{\varpi }}\right) $ be the
total torsion of the connection and $\theta ^{\mathcal{\varpi }}\in \Omega
^{1}\left( \pi ^{\mathcal{\varpi }}\right) $ be the torsion form.

Then, applying the total differential of the dual (to Wagner) connection 
\begin{equation*}
\widehat{d}_{\mathcal{\varpi }}^{\ast }:\Omega ^{1}\left( \pi ^{\mathcal{%
\varpi }}\right) \rightarrow \Omega ^{1}\left( \pi ^{\mathcal{\varpi }%
}\right) \otimes \Omega ^{1}\left( \pi ^{\mathcal{\varpi }}\right)
\end{equation*}%
we get tensor 
\begin{equation*}
\widehat{d}_{\mathcal{\varpi }}^{\ast }\left( \theta ^{\mathcal{\varpi }%
}\right) \in \Omega ^{1}\left( \pi ^{\mathcal{\varpi }}\right) \otimes
\Omega ^{1}\left( \pi ^{\mathcal{\varpi }}\right) .
\end{equation*}

Taking the symmetric $g^{\mathcal{\varpi }}$ and antisymmetric $a^{\mathcal{%
\varpi }}$ parts of this we get tensors%
\begin{equation*}
g^{\mathcal{\varpi }}\in \Sigma ^{2}\left( \pi ^{\mathcal{\varpi }}\right)
,\ \ a^{\mathcal{\varpi }}\in \Omega ^{2}\left( \pi ^{\mathcal{\varpi }%
}\right) .
\end{equation*}%
Assuming that tensor $g^{\mathcal{\varpi }}$ is non degenerated we get total
operator 
\begin{equation*}
A^{\mathcal{\varpi }}\in \Sigma _{1}\left( \pi ^{\mathcal{\varpi }}\right)
\otimes \Omega ^{1}\left( \pi ^{\mathcal{\varpi }}\right) ,
\end{equation*}%
instead of $a^{\mathcal{\varpi }},$ and horizontal $1$-forms%
\begin{equation}
\theta _{1}^{\mathcal{\varpi }}=\theta ^{\mathcal{\varpi }},\theta
_{2}^{w}=A^{\mathcal{\varpi }}(\theta _{1}^{\mathcal{\varpi }}),...,\theta
_{n}^{\mathcal{\varpi }}=A^{\mathcal{\varpi }}(\theta _{n-1}^{\mathcal{%
\varpi }}).  \label{invCoframe}
\end{equation}%
Remark that the torsion $T^{\mathcal{\varpi }}$ and torsion form has order 1
and therefore, tensors $g^{\mathcal{\varpi }},a^{\mathcal{\varpi }},A^{%
\mathcal{\varpi }}$ and $\theta _{i}^{\mathcal{\varpi }}$ has order 2.

We say that a domain $\mathcal{O}_{2}^{\mathcal{\varpi }}\subset J^{2}\left(
\pi ^{\mathcal{\varpi }}\right) $ consist of regular 2-jet of symbols if the
tensor $g^{\mathcal{\varpi }}$ is non degenerated and 
\begin{equation}
\theta _{1}^{\mathcal{\varpi }}\wedge \cdots \wedge \theta _{n}^{\mathcal{%
\varpi }}\neq 0.  \label{reg2jetSymb}
\end{equation}

Let $\left( e_{1}^{\mathcal{\varpi }},...,e_{n}^{\mathcal{\varpi }}\right) $
be the frame of horizontal vector fields $e_{i}^{\mathcal{\varpi }}\in
\Sigma _{1}\left( \pi ^{\mathcal{\varpi }}\right) $ dual to coframe $\left(
\theta _{1}^{\mathcal{\varpi }},...,\theta _{n}^{\mathcal{\varpi }}\right) .$
Then coefficients $J_{\alpha }^{\mathcal{\varpi }}$ in the decomposition of
universal symbol $\nu _{k}$ in this frame 
\begin{equation}
\nu _{k}=\sum_{\left\vert \alpha \right\vert =k}J_{\alpha }^{\mathcal{\varpi 
}}\left( e_{1}^{\mathcal{\varpi }}\right) ^{\alpha _{1}}\cdot \cdots \cdot
\left( e_{n}^{\mathcal{\varpi }}\right) ^{\alpha _{n}},  \label{invarSymbol}
\end{equation}%
are rational functions over regular domain $\mathcal{O}_{2}^{\mathcal{\varpi 
}}$ and invariants of the diffeomorphism group.

\begin{theorem}
\label{SymbNatInv}The field of rational natural invariants of symbols having
degree $k$ and constant type $\mathcal{\varpi }$ is generated by invariants $%
J_{\alpha }^{\mathcal{\varpi }},$ $\left\vert \alpha \right\vert =k,$ and
invariant derivations $e_{i}^{\mathcal{\varpi }},i=1,..,n.$
\end{theorem}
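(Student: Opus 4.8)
The plan is to apply the principle of $n$-invariants, Theorem \ref{n--invariants} (whose validity for symbols has already been recorded), to the bundle $\pi^{\mathcal{\varpi}}$ of constant type symbols, taking the invariant frame $\left(e_1^{\mathcal{\varpi}},\ldots,e_n^{\mathcal{\varpi}}\right)$ as the source of invariant derivations and the coefficients $J_\alpha^{\mathcal{\varpi}}$ as the base invariants. First I would record that every ingredient entering the construction is natural with respect to the diffeomorphism group: the Wagner connection $\nabla^{\sigma}$ is natural, hence so are its total covariant differential $\widehat{d}_{\mathcal{\varpi}}$, the torsion $T^{\mathcal{\varpi}}$ and torsion form $\theta^{\mathcal{\varpi}}$, the symmetric and skew parts $g^{\mathcal{\varpi}},a^{\mathcal{\varpi}}$ of $\widehat{d}_{\mathcal{\varpi}}^{\ast}\left(\theta^{\mathcal{\varpi}}\right)$, the operator $A^{\mathcal{\varpi}}$, and the coframe $\left(\theta_i^{\mathcal{\varpi}}\right)$ with its dual frame $\left(e_i^{\mathcal{\varpi}}\right)$. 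Consequently the coefficients $J_\alpha^{\mathcal{\varpi}}$ in decomposition (\ref{invarSymbol}) are rational natural invariants over the regular domain $\mathcal{O}_2^{\mathcal{\varpi}}$, and each $e_i^{\mathcal{\varpi}}$ carries natural invariants to natural invariants.

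The core of the argument is a reconstruction statement: from the values of the $J_\alpha^{\mathcal{\varpi}}$ together with their iterated derivatives along the invariant frame one recovers, modulo the group action, the full jet of a regular symbol. Here I would use essentially that the Wagner connection makes the universal symbol parallel, $\widehat{d}_{\mathcal{\varpi}}\left(\nu_k\right)=0$, and has trivial curvature $R^{\sigma}=0$; thus in a local parallel frame the symbol has constant components, and all jet information is encoded in the discrepancy between the parallel frame and the invariant frame $\left(e_i^{\mathcal{\varpi}}\right)$. This discrepancy is measured by the connection coefficients of $\nabla^{\sigma}$ written in the frame $\left(e_i^{\mathcal{\varpi}}\right)$ and by the structure functions of the commutators $[e_i^{\mathcal{\varpi}},e_j^{\mathcal{\varpi}}]$, all of which are natural invariants computable from the derivatives $e_i^{\mathcal{\varpi}}\left(J_\beta^{\mathcal{\varpi}}\right)$ --- exactly as the Christoffel symbols $\Gamma_{ab}^{c}$ entered the data (\ref{data}) in the homogeneous-form computation. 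Decomposition (\ref{invarSymbol}) then fixes $\sigma$ in the frame, these coefficients fix $\nabla^{\sigma}$, and repeated differentiation recovers the higher jets.

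Granting the reconstruction, generation is immediate. By the coframe condition (\ref{reg2jetSymb}) the frame $\left(e_i^{\mathcal{\varpi}}\right)$ supplies $n$ everywhere-independent invariant derivations over $\mathcal{O}_2^{\mathcal{\varpi}}$, playing the role of the Tresse derivatives while the $J_\alpha^{\mathcal{\varpi}}$ play the role of the base invariants. A rational natural invariant $I$ is a rational function on some finite jet space $J^{l}\left(\pi^{\mathcal{\varpi}}\right)$ invariant under the differential group; evaluated on a regular symbol it is a function of the $l$-jet of $\sigma$, which by the reconstruction is a rational function of the $J_\alpha^{\mathcal{\varpi}}$ and of finitely many iterated derivatives $e_{i_1}^{\mathcal{\varpi}}\cdots e_{i_m}^{\mathcal{\varpi}}\left(J_\alpha^{\mathcal{\varpi}}\right)$. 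As this identity holds on the dense regular domain and all functions involved are rational, $I$ lies in the field generated by the $J_\alpha^{\mathcal{\varpi}}$ and the derivations $e_i^{\mathcal{\varpi}}$, as claimed.

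The hard part will be the completeness of the reconstruction together with the genuine independence underlying it: one must verify that over $\mathcal{O}_2^{\mathcal{\varpi}}$ the non-degeneracy of $g^{\mathcal{\varpi}}$ and the coframe condition (\ref{reg2jetSymb}) really do yield $n$ functionally independent invariant directions, and that differentiating the symbol components $J_\alpha^{\mathcal{\varpi}}$ along the invariant frame recovers the Wagner connection with no loss of jet information. This is precisely the step where the special geometry of the constant type case --- the flatness $R^{\sigma}=0$ and the parallelism $\widehat{d}_{\mathcal{\varpi}}\left(\nu_k\right)=0$ --- must be used essentially, since without it the structure functions of the frame would not encode all higher derivatives of $\sigma$.
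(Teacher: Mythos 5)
Your proposal is correct and follows essentially the route the paper intends: the paper states this theorem without a separate proof, relying on the construction of the invariant coframe $\left(\theta_i^{\mathcal{\varpi}}\right)$, the frame $\left(e_i^{\mathcal{\varpi}}\right)$, the coefficients $J_\alpha^{\mathcal{\varpi}}$, and the principle of $n$-invariants (Theorem \ref{n--invariants}), which is exactly the machinery you deploy. Your reconstruction step --- recovering the Wagner connection and the higher jets of the symbol from the $J_\alpha^{\mathcal{\varpi}}$ and their iterated frame derivatives, with the structure functions and Christoffel symbols of the frame playing the role of the data (\ref{data}) --- is the natural completion of the argument the paper leaves implicit.
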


\subsection{Differential invariants of constant type scalar differential
operators}

Let $\chi _{k}^{\mathcal{\varpi }}$: $\emph{Diff}_{k}^{\mathcal{\varpi }%
}(M)\rightarrow M$ be the bundle of scalar differential operator of order $k$%
, having symbols of constant type $\mathcal{\varpi },$ and let $\mathbf{Diff}%
_{k}^{\mathcal{\varpi }}(M)$ be its module of smooth sections.

By $\widehat{\mathcal{O}}_{2}^{\mathcal{\varpi }}\subset J^{2}\left( \chi
_{k}^{\mathcal{\varpi }}\right) $ we denote the domain, where 2-jets of
symbols are regular in the above sense, i.e. 2-jets of symbols belong to
regular domain $\mathcal{O}_{2}^{\mathcal{\varpi }}.$

Denote by $\tau _{k}^{\mathcal{\varpi }}:S^{k}T^{\mathcal{\varpi }%
}\rightarrow M$ bundles \ of symbols having degree $k$ and constant type $%
\mathcal{\varpi },$ and let $\tau _{l}:S^{l}T\rightarrow M$ be bundles of
symbols of degree $l,$ $l=0,1,..,$ and let 
\begin{equation*}
\tau _{\left( k\right) }=\tau _{k}^{\mathcal{\varpi }}\oplus \tau
_{k-1}\oplus \cdots \oplus \tau _{1}\oplus \tau _{0}
\end{equation*}%
be the bundle of total symbols with principle symbol having of constant type 
$\mathcal{\varpi }.$

Consider differential operator 
\begin{equation*}
\mu _{k}:J^{k+1}\left( \chi _{k}^{\mathcal{\varpi }}\right) \rightarrow \tau
_{\left( k\right) },
\end{equation*}%
which sends differential operators $A\in \mathbf{Diff}_{k}^{\mathcal{\varpi }%
}(M)$ having regular 2-jet $[A]_{p}^{2}\in \widehat{\mathcal{O}}_{2}^{%
\mathcal{\varpi }}$ to the total symbol 
\begin{equation*}
\rm{smbl}_{\left( k\right) }\left( A\right) =\left( \rm{%
smbl}_{k}\left( A\right) ,\rm{smbl}_{k-1}\left(
A\right) ,...,\rm{smbl}_{0}\left( A\right) \right) 
\end{equation*}

with respect to the Wagner connection that corresponds to the regular
principal symbol $\rm{smbl}_{k}\left( A\right) .$

It follows from the construction of the Wagner connection that this operator
has order $\left( k+1\right) $ and is natural, i.e. commutes with the action
of the diffeomorphism group.

Regularity conditions allow us to construct invariant coframe (\ref%
{invCoframe}), and then by decomposing (\ref{invarSymbol}) the total symbol
in this coframe to find natural rational invariants $J_{\alpha }^{\mathcal{%
\varpi }},$ where $\left\vert \alpha \right\vert \leq k,$ on the $\left(
k+1\right) $-jet bundle $J^{k+1}\left( \chi _{k}^{\mathcal{\varpi }}\right)
. $

It follows from (\ref{SymbNatInv}) that invariants $J_{\alpha }^{\mathcal{%
\varpi }}$ and invariant derivations $e_{i}^{\mathcal{\varpi }}$ generate
the field of natural invariants of total symbols.

Therefore, applying the prolongations of $\mu _{k},$ 
\begin{equation*}
\mu _{k}^{\left( l\right) }:J^{k+l+1}\left( \chi _{k}^{\mathcal{\varpi }%
}\right) \rightarrow J^{l}(\tau _{\left( k\right) }),
\end{equation*}%
we'll get natural invariants of differential operators of the constant type.

\begin{theorem}
\label{ThInvariantScalar}The field of natural differential invariants of
linear scalar differential operators of order $k\geq 3,$ having constant
type $\mathcal{\varpi },$ is generated by the basic invariants $\mu
_{k}^{\ast }\left( J_{\alpha }^{\mathcal{\varpi }}\right) ,$ $\left\vert
\alpha \right\vert \leq k,$ and invariant derivatives $e_{i}^{\mathcal{%
\varpi }},$ $i=1,...,n.$
\end{theorem}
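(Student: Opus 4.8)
The plan is to reduce the computation of natural invariants of operators to the already-resolved case of total symbols, using the naturality and invertibility of the symbolization map $\mu _{k}$. First I would note that the quantization $Q=Q_{\nabla ,\nabla ^{\sigma }}$, built from the Wagner connection $\nabla ^{\sigma }$ of the regular principal symbol $\sigma =\mathrm{smbl}_{k}(A)$, inverts $\mu _{k}$ at the level of sections: over the regular domain $\widehat{\mathcal{O}}_{2}^{\mathcal{\varpi }}$ an operator $A$ is recovered from its total symbol by $A=Q\left( \mathrm{smbl}_{\left( k\right) }\left( A\right) \right) $, so that $A\mapsto \mathrm{smbl}_{\left( k\right) }\left( A\right) $ is a bijection between constant type operators with regular symbol and sections of $\tau _{\left( k\right) }$. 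Since $\mu _{k}$ commutes with the diffeomorphism group and $Q$ is constructed naturally from $\nabla ^{\sigma }$, this bijection is diffeomorphism-equivariant. Its prolongations $\mu _{k}^{\left( l\right) }:J^{k+l+1}\left( \chi _{k}^{\mathcal{\varpi }}\right) \rightarrow J^{l}(\tau _{\left( k\right) })$ therefore identify, via pullback, the field of natural differential invariants of operators with that of total symbols, and it suffices to exhibit generators on the symbol side.

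Next I would construct the invariants on the target bundle $\tau _{\left( k\right) }=\tau _{k}^{\mathcal{\varpi }}\oplus \tau _{k-1}\oplus \cdots \oplus \tau _{0}$. The leading component lands in the constant type bundle $\tau _{k}^{\mathcal{\varpi }}$, so it supplies the Wagner connection and, through (\ref{invCoframe}), the invariant coframe $\left( \theta _{1}^{\mathcal{\varpi }},\dots ,\theta _{n}^{\mathcal{\varpi }}\right) $ with dual invariant frame $\left( e_{1}^{\mathcal{\varpi }},\dots ,e_{n}^{\mathcal{\varpi }}\right) $. Decomposing each homogeneous component of the total symbol in this frame, exactly as in (\ref{invarSymbol}), yields the coefficients $J_{\alpha }^{\mathcal{\varpi }}$ for all $\left\vert \alpha \right\vert \leq k$: the pieces with $\left\vert \alpha \right\vert =k$ reproduce the principal-symbol invariants of Theorem \ref{SymbNatInv}, and those with $\left\vert \alpha \right\vert <k$ encode the subsymbols. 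These are rational over $\widehat{\mathcal{O}}_{2}^{\mathcal{\varpi }}$ and are invariant because both the frame and the total symbol are natural objects. Theorem \ref{SymbNatInv}, applied to the leading symbol and extended componentwise to the subsymbols, then shows that the $J_{\alpha }^{\mathcal{\varpi }}$ together with the derivations $e_{i}^{\mathcal{\varpi }}$ generate the field of natural invariants of total symbols.

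Finally I would invoke the principle of $n$-invariants, Theorem \ref{n--invariants} (valid for symbols as remarked), since the invariant coframe supplies $n$ invariants in general position over the regular domain: every natural invariant of total symbols is a rational function of the basic invariants $J_{\alpha }^{\mathcal{\varpi }}$ and their iterated invariant derivatives along $e_{1}^{\mathcal{\varpi }},\dots ,e_{n}^{\mathcal{\varpi }}$, these derivations playing the role of Tresse derivatives. Pulling back through $\mu _{k}$ and its prolongations $\mu _{k}^{\left( l\right) }$ transports this generating set to operators, giving the basic invariants $\mu _{k}^{\ast }\left( J_{\alpha }^{\mathcal{\varpi }}\right) $ and the invariant derivatives $e_{i}^{\mathcal{\varpi }}$ asserted in the theorem. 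The step I expect to be the main obstacle is the equivariant invertibility of the first paragraph together with its compatibility with differentiation: one must verify carefully that, over the regular locus, $\mu _{k}$ loses no information and that the prolonged maps $\mu _{k}^{\left( l\right) }$ intertwine the invariant derivations $e_{i}^{\mathcal{\varpi }}$ on the operator side with those on the symbol side, so that the induced correspondence of generating sets is an exact identification of fields rather than a mere surjection.
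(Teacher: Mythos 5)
Your proposal is correct and takes essentially the same route as the paper: the paper's (implicit) proof is precisely the construction preceding the theorem statement, namely the natural symbolization operator $\mu _{k}$ defined via the Wagner connection, the decomposition of the total symbol in the invariant coframe to produce the $J_{\alpha }^{\mathcal{\varpi }}$, the appeal to Theorem \ref{SymbNatInv} for generation on the symbol side, and pullback through the prolongations $\mu _{k}^{\left( l\right) }$. Your explicit observation that the quantization $Q_{\nabla ,\nabla ^{\sigma }}$ inverts $\mu _{k}$ equivariantly over the regular domain --- so that the correspondence of invariant fields is an exact identification rather than a mere surjection --- is left implicit in the paper but is exactly the right justification for the final step.
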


\subsection{Differential invariants of constant type differential operators,
acting in line bundles}

In this section we consider $\mathbf{Aut}(\xi )$- invariants of linear
differential operators, acting in line bundle $\xi $.

We will consider differential operators of the constant type $A\in \mathbf{Diff%
}_{k}^{\mathcal{\varpi }}\left( \xi \right) \subset \mathbf{Diff}_{k}\left(
\xi \right) $, i.e. operators such that their principal symbol $\rm{smbl%
}_{k}\left( A\right) \in \Sigma _{k}\left( M\right) $ belongs to type $%
\mathcal{\varpi }.$

We denote by 
\begin{equation*}
\pi _{k}^{\mathcal{\varpi }}:\emph{Diff}_{k}^{\mathcal{\varpi }}(\xi
)\rightarrow M
\end{equation*}%
the bundle of the constant type operators.

We'll also assume that the symbols of operators are regular in the previous
sense:%
\begin{equation*}
j_{2}\left( \rm{smbl}_{k}\left( A\right) \right) \subset 
\widehat{\mathcal{O}}_{2}^{\mathcal{\varpi }}.
\end{equation*}%
Thus the symbol defines the Wagner connection $\nabla ^{w}$, the invariant
coframe and, in addition, the linear connection $\nabla ^{A}$ in the line
bundle $\xi ,$ if the symbol is Wagner regular.

From now on we'll call such symbols simply \textit{regular. }

Remark that Wagner regularity depends on the second jet of the operator and
therefore defines an open subset in the space of second jets of operators $%
\mathcal{O}_{2}^{\mathcal{\varpi }}\subset J^{2}\left( \pi _{k}^{\mathcal{%
\varpi }}\right) $.

By the construction all of these connections are $\mathbf{Aut}(\xi )$%
-invariants.

Therefore, there is the total symbol 
\begin{equation*}
\rm{smbl}_{\left( k\right) }\left( A\right) =\left( \rm{%
smbl}_{k}\left( A\right) ,\rm{smbl}_{k-1}\left(
A\right) ,...,\rm{smbl}_{0}\left( A\right) \right) \in \tau
_{\left( k\right) },
\end{equation*}%
such that 
\begin{equation*}
Q^{A}\left( \rm{smbl}_{\left( k\right) }\left( A\right)
\right) =A.
\end{equation*}%
Here we denoted by $Q^{A}$ the quantization, defined by connections\\ $\nabla
^{\rm{smbl}_{k}\left( A\right) }$ and $\nabla ^{A}.$

Remark that operator $Q^{A}$ is also $\mathbf{Aut}(\xi )$-invariant.

Let $\kappa ^{A}\in \Omega ^{2}\left( M\right) $ be the curvature form of $%
\nabla ^{A}.$ It depends on the second jets of the operator and by the
standard procedure defines a horizontal $2$-form $\kappa \in \Omega
^{2}\left( \pi _{k}^{\mathcal{\varpi }}\right) ,$ having the second order
and satisfying the universality condition:%
\begin{equation*}
j_{2}\left( A\right) ^{\ast }\left( \kappa \right) =\kappa ^{A},
\end{equation*}%
for all differential operators $A\in \mathbf{Diff}_{k}^{\mathcal{\varpi }%
}\left( \xi \right) $ with regular symbols.

As above, we consider differential operator 
\begin{equation*}
\mu _{k}:J^{k+1}\left( \pi _{k}^{\mathcal{\varpi }}\right) \rightarrow \tau
_{\left( k\right) },
\end{equation*}%
which sends differential operators $A\in \mathbf{Diff}_{k}^{\mathcal{\varpi }%
}(M),$ having regular 2-jet $[A]_{p}^{2}\in \mathcal{O}_{2}^{\mathcal{\varpi 
}},$ to their total symbol and get the following result similar to (\ref%
{ThInvariantScalar}).

\begin{theorem}
\label{ThInvariantBundle}The field of natural differential $\mathbf{Aut}(\xi
)$-invariants of linear differential operators of order $k\geq 3,$ having
constant type $\mathcal{\varpi }$ and acting in line bundle $\xi ,$ is
generated by the basic invariants $\mu _{k}^{\ast }\left( J_{\alpha }^{%
\mathcal{\varpi }}\right) ,$ $\left\vert \alpha \right\vert \leq k,K_{ij},$
where $J_{\alpha }^{\mathcal{\varpi }}$ and $K_{ij}$ are coordinates of the
total symbol and the universal curvature for $\kappa $ in the invariant
frame, and \ by invariant derivatives $e_{i}^{\mathcal{\varpi }},$ $%
i=1,...,n.$
\end{theorem}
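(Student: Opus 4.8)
The plan is to follow the scheme of Theorem \ref{ThInvariantScalar}, treating the line-bundle curvature as the only new invariant datum beyond the symbol invariants. First I would record that every geometric object attached to a regular constant-type operator $A$ in the preceding subsections is $\mathbf{Aut}(\xi)$-invariant: the Wagner connection $\nabla^{\mathrm{smbl}_k(A)}$, the associated line-bundle connection $\nabla^A$, the quantization $Q^A$ with $A=Q^A(\mathrm{smbl}_{(k)}(A))$, the invariant coframe $(\theta_1^{\mathcal{\varpi}},\dots,\theta_n^{\mathcal{\varpi}})$ together with its dual frame $(e_1^{\mathcal{\varpi}},\dots,e_n^{\mathcal{\varpi}})$, and the universal curvature form $\kappa$. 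Naturality of $\nabla^A$ and of $\nabla^{\mathrm{smbl}_k(A)}$ was recorded in the remark following their construction, and naturality of the remaining objects is built into their universal defining properties. Hence $\mu_k^*(J_\alpha^{\mathcal{\varpi}})$ for $|\alpha|\le k$, the curvature coefficients $K_{ij}$, and all their iterated invariant derivatives $e_{i_1}^{\mathcal{\varpi}}\cdots e_{i_m}^{\mathcal{\varpi}}$ are genuine natural $\mathbf{Aut}(\xi)$-invariants.

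Next I would show that this data reconstructs the full jet of $A$ at a point up to an element of $\mathbf{Aut}(\xi)$. Since $A=Q^A(\mathrm{smbl}_{(k)}(A))$, it suffices to recover both the total symbol together with the base geometry, and the line-bundle connection $\nabla^A$ entering $Q^A$. The symbol part is handled precisely as in Theorem \ref{ThInvariantScalar}: the invariants $J_\alpha^{\mathcal{\varpi}}$ and the invariant derivations $e_i^{\mathcal{\varpi}}$ generate all natural invariants of the total symbol, and their values fix the Wagner connection and every subsymbol $\mathrm{smbl}_i(A)$ in the natural coordinates supplied by $J_1^{\mathcal{\varpi}},\dots,J_n^{\mathcal{\varpi}}$.

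The line-bundle part is the genuinely new step and the main obstacle. The connection form of $\nabla^A$ in a local nonvanishing section is \emph{not} itself an $\mathbf{Aut}(\xi)$-invariant, because the gauge subgroup $\mathcal{F}(M)\subset\mathbf{Aut}(\xi)$ shifts it by an exact form $d\ln|f|$; only its exterior derivative, the curvature $\kappa^A$, is gauge invariant, and its components in the invariant frame are the $K_{ij}$. The decisive point is that locally a connection in a line bundle is determined by its curvature up to such a gauge transformation: any two connection forms with the same curvature differ by a closed, hence locally exact, form $d\phi$, which is removed by the gauge transformation $|f|=e^{\phi}$. Therefore the values of $K_{ij}$ and their iterated $e_i^{\mathcal{\varpi}}$-derivatives determine $\nabla^A$ up to the action of $\mathcal{F}(M)$, and with the total symbol already fixed they determine $Q^A$ and the reconstructed operator $A=Q^A(\mathrm{smbl}_{(k)}(A))$ up to an element of $\mathbf{Aut}(\xi)$.

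Finally I would close by the Rosenlicht principle, exactly as in the earlier theorems. The invariants just displayed determine $A$ up to $\mathbf{Aut}(\xi)$ and hence separate the regular orbits in the jet spaces $J^{k+l+1}(\pi_k^{\mathcal{\varpi}})$. Any further natural invariant is constant along these orbits, so, via the natural coordinates $J_1^{\mathcal{\varpi}},\dots,J_n^{\mathcal{\varpi}}$ and the reconstruction above, it is a rational function of $\mu_k^*(J_\alpha^{\mathcal{\varpi}})$, the $K_{ij}$, and their iterated invariant derivatives $e_{i_1}^{\mathcal{\varpi}}\cdots e_{i_m}^{\mathcal{\varpi}}$. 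This is precisely the asserted generation statement.
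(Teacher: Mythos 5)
Your proposal is correct and follows essentially the same route as the paper, which derives this theorem exactly as Theorem \ref{ThInvariantScalar}: naturality of the Wagner connection, of $\nabla^{A}$, of the quantization $Q^{A}$, of the total symbol map $\mu_{k}$, and of the universal curvature $\kappa$, with the curvature components $K_{ij}$ adjoined as the only new generators. The one step you spell out that the paper leaves implicit --- that $\kappa^{A}$ determines $\nabla^{A}$ up to the gauge action of $\mathcal{F}\left( M\right) \subset \mathbf{Aut}(\xi )$, since two connections with equal curvature differ by a closed, hence locally exact, $1$-form $d\ln \left\vert f\right\vert$ --- is precisely the justification for adding the $K_{ij}$ to the generating set, so your write-up is, if anything, more complete than the paper's, which simply asserts the result is ``similar to'' the scalar case.
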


\section{Equivalence of differential operators}

\subsection{Equivalence of scalar differential operators}

%Let $A\in\\ \mathbf{Diff}_k( M)$ be a linear scalar differential operator. 
Let $A$ be a linear scalar differential operator over $M$, i.e. $A\in \mathbf{Diff}_k( M)$.
We'll say this operator is in \textit{general position }if for any
point $a\in M$ there are natural invariants $I_{1},...,I_{n},$ where $n=\dim
M,$ such that their values $I_{i}\left( A\right) ,$ $i=1,...,n,$ on this
operator are independent in a neighborhood $U$ of this point, i.e. 
\begin{equation}
dI_{1}\left( A\right) \wedge \cdots \wedge dI_{n}\left( A\right) \neq 0.
\label{InvIndepen}
\end{equation}%
The principle of $n$-invariants states that these invariants and invariants $%
I_{\alpha }=\square \left( I_{1}^{\alpha _{1}}\cdots I_{n}^{\alpha
_{n}}\right) ,$ $\left\vert \alpha \right\vert \leq k,$ generate all
invariants and dependencies%
\begin{equation}
I_{\alpha }\left( A\right) =F_{\alpha }\left( I_{1}\left( A\right) ,\cdots
,I_{n}\left( A\right) \right)   \label{diffdepend}
\end{equation}%
give us coefficients of the operator in these local coordinates 
\begin{equation}
x_{1}=I_{1}\left( A\right) ,....,x_{n}=I_{n}\left( A\right) .
\label{natural coord}
\end{equation}

We call these coordinates \textit{natural }due to the following\textit{\ }%
their property.

Let \textit{\ }$A^{\prime }\in \mathbf{Diff}_{k}\left( M^{\prime }\right) $
be another operator and let $\phi :M\rightarrow M^{\prime }$ be a local
diffeomorphism such that 
\begin{equation*}
\phi _{\ast }\left( A\right) =A^{\prime },\phi \left( a\right) =a^{\prime }.
\end{equation*}%
Then we have 
\begin{equation*}
\phi _{\ast }\left( I_{i}\left( A\right) \right) =\phi ^{\ast -1}\left(
I_{i}\left( A\right) \right) =I_{i}\left( \phi _{\ast }\left( A\right)
\right) =I_{i}\left( A^{\prime }\right) ,
\end{equation*}%
i.e. in natural coordinates the diffeomorphism has the following form%
\begin{equation*}
\left( x_{1},...,x_{n}\right) \rightarrow \left( x_{1}^{\prime
},...,x_{n}^{\prime }\right) ,
\end{equation*}%
where 
\begin{equation*}
x_{1}^{\prime }=I_{1}\left( A^{\prime }\right) ,....,x_{n}^{\prime
}=I_{n}\left( A^{\prime }\right) .
\end{equation*}%
Therefore functions 
\begin{equation*}
F_{\alpha }=F_{\alpha }\left( x_{1},...,x_{n}\right)
\end{equation*}%
defines the orbit of the germ of operator $A$ at the point $a\in M$ with
respect to the diffeomorphism group.

We will reformulate this observation in the following way.

For a given operator $A\in \mathbf{Diff}_{k}\left( M\right) $ in general
position we"ll consider:

\begin{itemize}
\item \textit{Natural charts - i.e. }local diffeomorphisms 
\begin{equation*}
\phi ^{I}:U^{I}\rightarrow \mathbf{D}^{I}\subset \mathbb{R}^{n},
\end{equation*}%
on open domains in $\mathbb{R}^{n}$ given by such natural invariants $%
I=\left( I_{1},...,I_{n}\right) ,$ that (\ref{InvIndepen}) holds in open set 
$U^{I}.$

\item \textit{Natural atlas }- i.e. a collection of natural charts $\left\{
U^{I},\phi ^{I}\right\} ,$ covering manifold $M,$ and given by distinct
natural invariants.
\end{itemize}

We denote by 
\begin{equation*}
\mathbf{D}^{IJ}=\phi ^{I}\left( U^{I}\cap U^{J}\right)
\end{equation*}%
and assume that domains $U^{I},U^{IJ}$ are connected and simply connected.

Let $A_{I}=\phi _{\ast }^{I}\left( \left. A\right\vert _{U^{I}}\right) ,$ $%
A_{IJ}=\phi _{\ast }^{I}\left( \left. A\right\vert _{U^{I}\cap U^{J}}\right) 
$ be the images of the operator $A$ in natural coordinates.

Then $\phi _{\ast }^{IJ}\left( A_{IJ}\right) =A^{JI},$ where $\phi ^{IJ}:%
\mathbf{D}^{IJ}\rightarrow \mathbf{D}^{JI}$ are the transition maps.

We call \cite{LY3} such atlas as natural atlas associated with operator $A$
and collection the $\left( \mathbf{D}^{I},\mathbf{D}^{IJ},\phi
^{IJ},A^{I},A^{IJ}\right) =\mathcal{D}\left( A\right) $ we call -\textit{%
natural model} of the operator.

\begin{theorem}
Let $A,A^{\prime }\in \mathbf{Diff}_{k}\left( M\right) $ be operators in
general position. Then these operators are equivalent with respect to group
of diffeomorphisms if and only if the following conditions hold:\newline
Open sets 
\begin{equation*}
U^{I\prime }=\left( \phi ^{I\prime }\right) ^{-1}\left( \mathbf{D}%
^{I}\right) ,
\end{equation*}%
\newline
where $\phi ^{I\prime }=\left( I_{1}\left( A^{\prime }\right)
,...,I_{n}\left( A^{\prime }\right) \right) :M\rightarrow \mathbb{R}^{n}$
constitute a natural atlas for operator $A^{\prime },$ $\phi ^{IJ\prime
}=\phi ^{IJ}:\mathbf{D}^{IJ}\rightarrow \mathbf{D}^{JI},$ and
\begin{equation*}
A_{I}=\phi _{\ast }^{I\prime }\left( \left. A^{\prime }\right\vert
_{U^{I\prime }}\right) ,A_{IJ}=\phi _{\ast }^{I\prime }\left( \left.
A^{\prime }\right\vert _{U^{I\prime }\cap U^{J\prime }}\right) ,
\end{equation*}%
i.e. when natural models of the operators coincide.
\end{theorem}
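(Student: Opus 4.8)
The plan is to deduce both directions from the single structural fact that the invariants $I_i$ are natural, i.e. $\phi_\ast(I_i(A))=I_i(\phi_\ast(A))$ for every diffeomorphism $\phi$; everything else is bookkeeping with the chart maps.

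For necessity, suppose a diffeomorphism $\psi:M\to M$ satisfies $\psi_\ast(A)=A'$. Naturality gives $I_i(A')=I_i(\psi_\ast(A))=\psi_\ast(I_i(A))=I_i(A)\circ\psi^{-1}$, so the natural coordinate map for $A'$ factors as $\phi^{I\prime}=\phi^I\circ\psi^{-1}$. From this I read off at once that $U^{I\prime}=\psi(U^I)$, that the images coincide, $\phi^{I\prime}(U^{I\prime})=\mathbf{D}^I$, and that the transition maps agree,
\[
\phi^{IJ\prime}=\phi^{J\prime}\circ(\phi^{I\prime})^{-1}=\phi^J\circ(\phi^I)^{-1}=\phi^{IJ}.
\]
Pushing the operator forward, $\phi^{I\prime}_\ast(A'|_{U^{I\prime}})=(\phi^I\circ\psi^{-1})_\ast\bigl((\psi_\ast A)|_{\psi(U^I)}\bigr)=\phi^I_\ast(A|_{U^I})=A_I$, and identically for $A_{IJ}$. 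Hence the two natural models coincide.

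For sufficiency, assume the natural models coincide. Since $\phi^I$ and $\phi^{I\prime}$ share the image $\mathbf{D}^I$, I define on each chart the local diffeomorphism $\psi_I=(\phi^{I\prime})^{-1}\circ\phi^I:U^I\to U^{I\prime}$. Using $A_I=\phi^{I\prime}_\ast(A'|_{U^{I\prime}})$ one computes
\[
(\psi_I)_\ast(A|_{U^I})=\bigl((\phi^{I\prime})_\ast\bigr)^{-1}\bigl(\phi^I_\ast(A|_{U^I})\bigr)=\bigl((\phi^{I\prime})_\ast\bigr)^{-1}(A_I)=A'|_{U^{I\prime}},
\]
so each $\psi_I$ locally carries $A$ to $A'$.

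The step I expect to be the main obstacle is assembling the local diffeomorphisms $\psi_I$ into one global $\psi:M\to M$. On an overlap $U^I\cap U^J$ the identity $\psi_I=\psi_J$ is equivalent, after composing with the chart maps, to $\phi^{J\prime}\circ(\phi^{I\prime})^{-1}=\phi^J\circ(\phi^I)^{-1}$, that is to $\phi^{IJ\prime}=\phi^{IJ}$; this is precisely the transition-map equality furnished by the coincidence of models. Thus the $\psi_I$ agree on overlaps and glue to a smooth map $\psi$ defined on all of $M=\bigcup_I U^I$. Running the same construction with $\psi_I^{-1}=(\phi^I)^{-1}\circ\phi^{I\prime}$ yields a global map on $\bigcup_I U^{I\prime}=M$, using that $\{U^{I\prime}\}$ is by hypothesis a natural atlas for $A'$; the two are mutually inverse, so $\psi$ is a diffeomorphism with $\psi_\ast(A)=A'$. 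The standing assumption that the domains $U^I$ and $U^{IJ}$ are connected and simply connected guarantees that the chart data are single-valued, so no monodromy obstructs the gluing.
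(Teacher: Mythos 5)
Your proof is correct and takes essentially the same approach as the paper: the paper's entire proof is the one-sentence remark that any diffeomorphism carrying $A$ to $A^{\prime}$ is the identity map in natural coordinates, which is precisely the content of your identity $\phi ^{I\prime }=\phi ^{I}\circ \psi ^{-1}$ (necessity) and of your local maps $\psi _{I}=\left( \phi ^{I\prime }\right) ^{-1}\circ \phi ^{I}$ (sufficiency). You have simply made explicit the bookkeeping that the paper leaves implicit, in particular the gluing of the $\psi _{I}$ on overlaps via the assumed equality of transition maps $\phi ^{IJ\prime }=\phi ^{IJ}$.
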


\begin{proof}
As we have seen any diffeomorphism, transforming operator $A$ to $A^{\prime
},$ in natural coordinates has the form of the identity map.
\end{proof}

\begin{remark}
We have two types of scalar differential operators, where we able to compute
the fields of natural differential invariants: operators with symbols of the
general type and the constant type operators.
\end{remark}

\subsection{Equivalence of differential operators, acting in line bundles}

For the case of operators $A\in \mathbf{Diff}_{k}\left( \xi \right) ,$
acting in the line bundle $\xi ,$ we"ll restrict ourselves by the regular
operators of the constant type only.

In this case we have two connections naturally associated with operator:
Wagner connection $\nabla ^{\rm{smbl}_{k}\left( A\right) }$ on
the manifold and connection $\nabla ^{A}$ in the line bundle. These
connections define also the quantization $Q^{A},$ naturally (with respect to
the automorphism group $\mathbf{Aut}(\xi )$) associated with this operator.

Thus operator $A$ has well defined total symbol $\rm{smbl}%
_{\left( k\right) }\left( A\right) $ and accordingly \textit{scalar
shadow} $A_{\natural }\in \mathbf{Diff}_{k}\left( M\right) $ of the operator 
$A,$%
\begin{equation*}
A_{\natural }=Q^{W}\left( \rm{smbl}_{\left( k\right) }\left(
A\right) \right) .
\end{equation*}%
Here $Q^{W}$ is the quantization for scalar differential operators given by
the Wagner connection $\nabla ^{\rm{smbl}_{k}\left( A\right) }.
$

Naturality of all these constructions show us that if two regular operators $%
A,B\in \mathbf{Diff}_{k}\left( \xi \right) $ of constant type $\mathcal{%
\varpi }$ are $\mathbf{Aut}(\xi )$-equivalent then their scalar shadows $%
A_{\natural },B_{\natural }\in \mathbf{Diff}_{k}\left( M\right) $ should be
equivalent with respect to the diffeomorphism group $\mathcal{G}\left(
M\right) .$

On the other hand, let operators $A_{\natural },B_{\natural }\in \mathbf{Diff%
}_{k}\left( M\right) $ are $\mathcal{G}\left( M\right) $-equivalent and let
diffeomorphism $\psi :M\rightarrow M$ (see the above  construction ) sends
operator $A_{\natural }\ $to $B_{\natural },$ $\psi _{\ast }\left(
A_{\natural }\right) =B_{\natural }.$

Then diffeomorphism $\psi $ has a lift $\overline{\psi }\in \mathbf{Aut}(\xi
)$ if and only if (see, Proposition \ref{diffeomorphismLift}) 
\begin{equation}
\psi ^{\ast }\left( w_{1}\left( \xi \right) \right) =w_{1}\left( \xi \right)
.  \label{StifelCond}
\end{equation}

Assume that this condition holds, then operators $\overline{A}=\overline{%
\psi }_{\ast }\left( A\right) $ and $B$ has the same total symbols and
scalar shadows.

Let $\overline{\nabla }^{A}=\nabla ^{\overline{A}}$ be the image of
connection $\nabla ^{A}$ under the automorphism $\overline{\psi },$ and let $%
\kappa _{A}\in \Omega ^{2}\left( M\right) $ and $\kappa _{B}\in \Omega
^{2}\left( M\right) $ be curvature forms for linear connections $\nabla ^{A}$
\ \ and $\nabla ^{B}$ respectively . They give us one more condition for
diffeomorphism $\psi :$%
\begin{equation}
\psi ^{\ast }\left( \kappa _{B}\right) =\kappa _{A},
\label{curvature condition}
\end{equation}%
or equivalently 
\begin{equation*}
\kappa _{B}=\kappa _{\overline{A}}.
\end{equation*}

Then,%
\begin{equation*}
d_{\nabla ^{\overline{A}}}-d_{\nabla ^{B}}=\theta _{\psi }\otimes \rm{id%
}
\end{equation*}%
for some differential 1-form $\theta _{\psi }\in \Omega ^{1}\left( M\right)
, $ and therefore 
\begin{equation*}
d\theta _{\psi }=0,
\end{equation*}%
if condition (\ref{curvature condition}) holds.

Remark, that the different lifts $\overline{\psi }$ differ on automorphisms
given by multiplication on functions $f\in \mathcal{F}\left( M\right) $ that
induce transformations $\theta _{\psi }\rightarrow \theta _{\psi }+d\ln
\left\vert f\right\vert .$

Thus the cohomology class 
\begin{equation*}
\vartheta _{A,B}\in H^{1}\left( M,\mathbb{R}\right)
\end{equation*}%
of the closed 1-form $\theta _{\psi }$ defines a new obstruction for
existence of the automorphism $\psi .$

Summarizing, we get the following.

\begin{theorem}
Two regular operators $A,B\in \mathbf{Diff}_{k}\left( \xi \right) $ of
constant type $\mathcal{\varpi }$ are $\mathbf{Aut}(\xi )$-equivalent if and
only if their scalar shadows $A_{\natural },B_{\natural }\in \mathbf{Diff}%
_{k}\left( M\right) $ are $\mathcal{G}\left( M\right) $-equivalent and the
diffeomorphism $\psi :M\rightarrow M,$ $\psi _{\ast }\left( A_{\natural
}\right) =B_{\natural },$ satisfies in addition to the following conditions:

\begin{enumerate}
\item It preserves the first Stiefel-Whitney class $w_{1}\left( \xi \right) $
of the bundle:%
\begin{equation*}
\psi ^{\ast }\left( w_{1}\left( \xi \right) \right) =w_{1}\left( \xi \right)
.
\end{equation*}

\item It transforms the curvature form of the connection $\nabla ^{B}$ to
the connection form of the connection $\nabla ^{A}:$%
\begin{equation*}
\psi ^{\ast }\left( \kappa _{B}\right) =\kappa _{A}.
\end{equation*}

\item The obstruction $\vartheta _{A,B}\in H^{1}\left( M,\mathbb{R}\right) $
is trivial:%
\begin{equation*}
\vartheta _{A,B}=0.
\end{equation*}
\end{enumerate}
\end{theorem}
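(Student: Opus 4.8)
The plan is to read off necessity directly from the naturality of every geometric object attached to a regular constant type operator, and to obtain sufficiency by lifting $\psi$ to the bundle and then correcting the lift by a gauge factor, the obstruction to which is exactly $\vartheta_{A,B}$. Most of the sufficiency mechanism has in fact already been assembled in the construction preceding the statement, so the proof mainly organizes those pieces in the two directions.

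For necessity, I would start from an automorphism $\overline{\psi}\in\mathbf{Aut}(\xi)$ covering $\psi\in\mathcal{G}(M)$ with $\overline{\psi}_{\ast}(A)=B$. Since the Wagner connection, the line-bundle connection $\nabla^{A}$, the quantization $Q^{A}$ and the total symbol $\rm{smbl}_{(k)}(A)$ are all $\mathbf{Aut}(\xi)$-natural (as recorded in the remarks accompanying their definitions), applying $\overline{\psi}$ transports each $A$-object to the corresponding $B$-object. In particular $\psi_{\ast}(A_{\natural})=B_{\natural}$, giving condition (1); the mere existence of the lift forces $\psi^{\ast}(w_{1}(\xi))=w_{1}(\xi)$ by Proposition \ref{diffeomorphismLift}, giving (2); naturality of $\nabla^{A}$ gives $\psi^{\ast}(\kappa_{B})=\kappa_{A}$, which is (3); and because the lift realizes the equivalence outright, $\overline{A}=\overline{\psi}_{\ast}(A)=B$ forces $\nabla^{\overline{A}}=\nabla^{B}$, so the form $\theta_{\psi}$ vanishes and in particular $\vartheta_{A,B}=0$, which is (4).

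For sufficiency, I would run the construction preceding the theorem in reverse. Condition (1) supplies $\psi$ with $\psi_{\ast}(A_{\natural})=B_{\natural}$; condition (2) together with Proposition \ref{diffeomorphismLift} then guarantees that $\psi$ admits a lift $\overline{\psi}\in\mathbf{Aut}(\xi)$. Setting $\overline{A}=\overline{\psi}_{\ast}(A)$, the naturality argument already given shows that $\overline{A}$ and $B$ share the same Wagner connection, total symbol and scalar shadow, so they can differ only through their line-bundle connections, $d_{\nabla^{\overline{A}}}-d_{\nabla^{B}}=\theta_{\psi}\otimes\rm{id}$. Condition (3), written as $\kappa_{B}=\kappa_{\overline{A}}$, forces $d\theta_{\psi}=0$, and condition (4) says that the class of this closed form, namely $\vartheta_{A,B}$, vanishes, so $\theta_{\psi}=d\ln|f|$ for some $f\in\mathcal{F}(M)$.

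The hard part, and the final step, is to convert this agreement of data into an actual equality of operators. Composing $\overline{\psi}$ with multiplication by a suitable $f\in\mathcal{F}(M)$, which by the exact sequence (\ref{group exact seq}) lies over $\psi$ and shifts $\theta_{\psi}$ by $d\ln|f|$, I can arrange $\theta_{\psi}=0$, so that the corrected lift carries $\nabla^{A}$ exactly onto $\nabla^{B}$. The corrected $\overline{A}$ then agrees with $B$ on both connections and on the total symbol; since $Q^{A}$ splits the symbol sequence (\ref{symbol seq}) and therefore reconstructs an operator uniquely from its total symbol once the two connections are fixed, I conclude $\overline{A}=B$, and hence $A$ and $B$ are $\mathbf{Aut}(\xi)$-equivalent. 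The only genuine subtlety is to confirm that matching the total symbol together with both connections pins the operator down exactly, i.e. that the residual ambiguity in the lift is controlled by the single cohomological invariant $\vartheta_{A,B}$ and by nothing further.
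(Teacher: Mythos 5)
Your proposal is correct and follows essentially the same route as the paper: the theorem there is stated as a summary of precisely this construction --- naturality of the Wagner connection, $\nabla^{A}$, quantization and total symbol for necessity, then (for sufficiency) the lift granted by the Stiefel--Whitney condition, the curvature condition forcing $d\theta_{\psi}=0$, and vanishing of $\vartheta_{A,B}$ allowing the gauge correction $\theta_{\psi}\rightarrow\theta_{\psi}+d\ln\left\vert f\right\vert$ that makes $\nabla^{\overline{A}}=\nabla^{B}$. Your final paragraph even makes explicit the concluding reconstruction step ($\overline{A}=B$ because the quantization determined by the now-coincident connections is an isomorphism between operators and total symbols), which the paper leaves implicit.
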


\begin{remark}
In the above construction of the diffeomorphism $\psi $ in terms of natural
atlases we should add the following condition only: components of the
curvature forms in the corresponding natural charts should be equal. This
condition shall fix the local structure of the diffeomorphism. Topological
conditions: $\psi ^{\ast }\left( w_{1}\left( \xi \right) \right)
=w_{1}\left( \xi \right) $ and $\vartheta _{A,B}=0$ should be checked
additionally.
\end{remark}

\subsection{Equivalence of differential equations}

By differential equations in this section we'll understand homogeneous
differential equations given by Chern regular differential operators $A\in 
\mathbf{Diff}_{k}\left( \xi \right) ,$ or in another words, by conformal
classes $[A]$ of operators.

We'll use the Chern connection $\nabla ^{\lbrack \rm{smbl}_{k}\left(
A\right) ]},$ defining on manifold $M$ by the conformal class $[\rm{smbl%
}_{k}\left( A\right) ]$ of the principal symbol, and the linear connection $%
\nabla ^{\lbrack A]},$ defining by the conformal class $[A]$ of the operator.

We denote by 
\begin{equation*}
\rm{smbl}_{\left( k\right) }\left( A\right) =\sum_{i=0}^{k}%
\rm{smbl}_{i}\left( A\right) 
\end{equation*}%
the total symbol of the operator, given by the quantization, associated with
connections  $\nabla ^{\lbrack \rm{smbl}_{k}\left( A\right) ]}$ and $%
\nabla ^{\lbrack A]}.$

The linear property of the quantization shows us that 
\begin{equation*}
\rm{smbl}_{i}\left( fA\right) =f\rm{smbl}%
_{i}\left( A\right) ,
\end{equation*}%
for all functions $f\in \mathcal{F}\left( M\right) .$

In other words, tensors $\rm{smbl}_{i}\left( A\right) \in
\sum_{i}\left( M\right) $ are relative invariants of operators.

Let $\omega ^{C}=d\theta ^{\rm{smbl}_{k}\left( A\right) }\in
\Omega ^{2}\left( M\right) $ and $\widehat{\omega }:\Omega _{1}\left(
M\right) \rightarrow \Omega ^{1}\left( M\right) $ be the operator defined in
(\ref{ChernOperator}).

Then functions 
\begin{equation*}
H_{i}\left( A\right) =\alpha ^{i}\rfloor \rm{smbl}_{i}\left(
A\right) ,
\end{equation*}%
where $\alpha =\widehat{\omega }\left( \rm{smbl}_{1}\left(
A\right) \right) \in \Omega ^{1}\left( M\right) ,$ are also relative
invariants%
\begin{equation*}
H_{i}\left( fA\right) =f^{i+1}H_{i}\left( A\right) .
\end{equation*}%
Then operators 
\begin{equation*}
A_{\nu ,i}=\lambda _{i}\left( A\right) ~A\in \mathbf{Diff}_{k}\left( \xi
\right) ,
\end{equation*}%
where 
\begin{equation*}
\lambda _{i}\left( A\right) =\frac{H_{i}\left( A\right) }{H_{i+1}\left(
A\right) },
\end{equation*}%
we call \textit{normalizations of operator} $A.$

It is easy to check that 
\begin{equation*}
\left( fA\right) _{\nu ,i}=A_{\nu ,i},
\end{equation*}%
and therefore $A_{\nu ,i}$ depends on the conformal class only.

\begin{theorem}
Let $[A]$ and $[B]$ be conformal classes of Chern regular differential
operators $A,B\in \mathbf{Diff}_{k}\left( \xi \right) $, such that $\lambda
_{i}\left( A\right) $, $\lambda _{i}\left( B\right) \in \mathcal{F}\left(
M\right) $, for some $i,$ $i=0,...,k-1.$

Then these classes are $\mathbf{Aut}(\xi )$-equivalent if and only if their
normalizations $A_{\nu ,i}$ and $B_{\nu ,i}$ are $\mathbf{Aut}(\xi )$%
-equivalent.
\end{theorem}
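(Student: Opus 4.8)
The plan is to show that the assignment $A\mapsto A_{\nu,i}$ picks out, from each conformal class, a canonical representative that depends only on the class and that transforms naturally under $\mathbf{Aut}(\xi)$; the equivalence of classes then reduces immediately to the equivalence of these canonical representatives. First I would establish the naturality of the scalar $\lambda_i$. Every ingredient entering $\lambda_i$ is manufactured from the Chern connections $\nabla^{[\mathrm{smbl}_k(A)]}$ on $M$ and $\nabla^{[A]}$ in $\xi$, and these connections are $\mathbf{Aut}(\xi)$-natural by the remark following the construction of $\nabla^{[A]}$. Consequently the total symbols $\mathrm{smbl}_i(A)$, the form $\omega^C=d\theta^{\mathrm{smbl}_k(A)}$, the operator $\widehat{\omega}$, the covector $\alpha=\widehat{\omega}(\mathrm{smbl}_1(A))$, and hence the functions $H_i(A)=\alpha^i\rfloor\mathrm{smbl}_i(A)$ all transform naturally. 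Writing $\widetilde{\phi}\in\mathbf{Aut}(\xi)$ covering $\phi\in\mathcal{G}(M)$, this gives $\phi_*(H_i(A))=H_i(\widetilde{\phi}_*(A))$, and dividing, $\phi_*(\lambda_i(A))=\lambda_i(\widetilde{\phi}_*(A))$. Combining this with the equivariance of scalar multiplication under $\widetilde{\phi}_*$ yields the central identity
\[
\widetilde{\phi}_*(A_{\nu,i})=(\widetilde{\phi}_*(A))_{\nu,i}.
\]

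For the ``only if'' direction I would invoke this identity together with the already established conformal invariance $(fA)_{\nu,i}=A_{\nu,i}$. If $[A]$ and $[B]$ are $\mathbf{Aut}(\xi)$-equivalent, I choose $\widetilde{\phi}$ with $\widetilde{\phi}_*(A)=gB$ for some $g\in\mathcal{F}(M)$; then $\widetilde{\phi}_*(A_{\nu,i})=(\widetilde{\phi}_*(A))_{\nu,i}=(gB)_{\nu,i}=B_{\nu,i}$, so the normalizations are equivalent. For the ``if'' direction, suppose $\widetilde{\phi}_*(A_{\nu,i})=B_{\nu,i}$. The hypothesis $\lambda_i(A)\in\mathcal{F}(M)$ guarantees that $A_{\nu,i}=\lambda_i(A)A$ is a genuine representative of $[A]$, so its image $\widetilde{\phi}_*(A_{\nu,i})$ is conformal to $\widetilde{\phi}_*(A)$; since it equals $B_{\nu,i}$, which is conformal to $B$, I conclude $[\widetilde{\phi}_*(A)]=[B]$, that is, $[A]$ and $[B]$ are $\mathbf{Aut}(\xi)$-equivalent.

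The main obstacle is the first step: verifying that the whole chain, from the Chern connections down to $\lambda_i$, commutes with the automorphism action. This is not a single identity but a succession of compatibilities, and the one point requiring care is that the hook contraction respects the pushforward, i.e. that $\phi_*(\alpha^i\rfloor\mathrm{smbl}_i(A))=(\phi_*\alpha)^i\rfloor\mathrm{smbl}_i(\widetilde{\phi}_*(A))$. Once the naturality of $\widehat{\omega}$ and of each $\mathrm{smbl}_i(A)$ is in hand, this contraction identity is a formal consequence of the equivariance of the natural pairing between $S^iT^*$ and $S^iT$, so the remaining work is bookkeeping rather than a genuine difficulty.
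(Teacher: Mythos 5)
Your proof is correct and takes essentially the route the paper intends: the paper states this theorem without an explicit proof, but the text immediately preceding it supplies exactly your two ingredients --- the conformal invariance $\left( fA\right) _{\nu ,i}=A_{\nu ,i}$ and the $\mathbf{Aut}(\xi )$-naturality of the Chern connections $\nabla ^{\lbrack \mathrm{smbl}_{k}\left( A\right) ]}$, $\nabla ^{\lbrack A]}$ (hence of $\mathrm{smbl}_{i}$, $H_{i}$, $\lambda _{i}$ and of the normalization itself) --- from which both directions follow as you argue, with the hypothesis $\lambda _{i}\left( A\right) ,\lambda _{i}\left( B\right) \in \mathcal{F}\left( M\right) $ used exactly where you use it, to ensure $A_{\nu ,i}\in \lbrack A]$ and $B_{\nu ,i}\in \lbrack B]$. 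Your write-up merely makes explicit the equivariance bookkeeping that the paper leaves to the reader.
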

\bigskip

{\bf Acknowledgements}\medskip

This work is supported by the Russian Foundation for Basic Research under grant 18-29-10013 mk.
\bigskip

\end{document}